\newtheorem{theo}{Theorem}[section]
\newtheorem{lemm}[theo]{Lemma}
\newtheorem{prop}[theo]{Proposition}
\newtheorem{coro}[theo]{Corollary}
\newtheorem{defi}[theo]{Definition\rm}
\newtheorem{rema}[theo]{Remark}
\newenvironment{theoi}[2][Theorem]{\begin{trivlist}
\item[\hskip \labelsep {\bfseries #1}\hskip \labelsep {\bfseries #2}]}{\end{trivlist}}
\newenvironment{propi}[2][Proposition]{\begin{trivlist}
\item[\hskip \labelsep {\bfseries #1}\hskip \labelsep {\bfseries #2}]}{\end{trivlist}}
\def\og{\leavevmode\raise.3ex\hbox{$\scriptscriptstyle\langle\!\langle$~}}
\def\fg{\leavevmode\raise.3ex\hbox{~$\!\scriptscriptstyle\,\rangle\!\rangle$}}
\newcommand{\Z}{\mathbb{Z}}
\newcommand{\R}{\mathbb{R}}
\newcommand{\C}{\mathbb{C}}
\renewcommand{\Re}{\mathfrak{Re}}
\renewcommand{\Im}{\mathfrak{Im}}
\def\germ #1 {\mathfrak{#1}}
\def\cal #1 {\mathcal{#1}}
\title[Unitary representations of $\tilde{SU}(1,1)$ and tensor products]{Unitary representations of the universal cover of $SU(1,1)$ and tensor products}
\author{Guillaume Tomasini,\ Bent \O rsted}
\begin{document}


\begin{abstract}
In this paper we initiate a study of the relation between weight modules for simple Lie algebras and unitary representations of the corresponding simply-connected Lie groups. In particular we consider in detail from this point of view the universal covering group of $SU(1,1)$, including new results on the discrete part of tensor products of irreducible representations. As a consequence of these results, we show that the set of smooth vectors of the tensor product intersects trivially some of the representations in the discrete spectrum.
\end{abstract}


\maketitle


\section{Introduction}


The category of weight modules for simple Lie algebras, and in particular those of degree one, has been much studied in recent years. From the point of view of the unitary dual of the corresponding simply-connected Lie group, it is a natural question to find those degree one modules that integrate to unitary representations; they should form a small but interesting class of unitary representations with small Gelfand-Kirillov dimension. In this paper we treat in detail the case of $\germ sl (2,\C)$, in effect giving a new proof of the classification due to Pukansky of the unitary dual of the universal covering of $SU(1,1)$; furthermore we apply this to studying in detail tensor products of such representations, obtaing new results about the discrete spectrum in such tensor products, and about the possible relation between the smooth vectors of the tensor product and the representations in the discrete spectrum. The methods are developed so as to apply to higher rank cases, where similar results are expected to hold.

Let us now review the main results of this paper. Let $G$ denote the uiversal covering of $G_0=SU(1,1)$. Let $H=\left(\begin{array}{cc}
 1 & 0\\
 0 & -1
\end{array}\right)$. Then $\{e^{itH},\ 0\leq t<2\pi\}$ generates a maximal compact subgroup $K_0$ of $G_0$. Its covering group is $K=\{\exp (itH),\ t\in\ \R\}$. The center of $G$ is generated by $\exp (2i\pi H)$. Let now $\rho$ denote an irreducible unitary representation of $G$. From Schur's lemma we conclude that $\rho(\exp (2i\pi H))=e^{-2i\pi\tau_0}I$. Therefore, $\tilde{\rho}(\exp(itH)):=e^{i\tau_0t}\rho(\exp(itH))$ is a unitary representation of $\R$, with period $2\pi$, and hence is completely reducible. As a consequence, $H$ possesses a complete system of eigenelements. In other word, the corresponding representation of the complexified Lie algebra $\germ sl (2,\C)$ is a weight module (see definition \ref{def:WM}). We shall review the basics of weight module in section 2.

In section 3, we classify the unitarisable weight modules for $\germ su (1,1)$. Recall that a unitarisable module is a module defined on a Hilbert space which is the differential of a unitary module for the universal covering group $G$. Using the explicit action of $\germ sl (2,\C)$, we recover the classification due to Pukansky of the unitary dual of $G$, which falls into 3 series: the principal series $\pi_{\epsilon,it}$ ($0< \epsilon\leq 1$, $t\in \R$), the complementary series $\pi^c_{\sigma,\tau}$ ($0< \sigma,\ \tau< 1$), the (continuation of the) discrete series $\pi^{\pm}_{\lambda}$ ($\lambda>0$), and the extra trivial representation.

In section 4 and 5, we study a tensor product $V$ of the form: $\pi_1\otimes\pi^+_{\lambda}$ where $\pi_1$ is $\pi_{\epsilon,it}$, $\pi^c_{\sigma,\tau}$ or $\pi^-_{\mu}$. The main result in section 4 is theorem \ref{thm:tens-alg}:
\begin{theoi}{\ref{thm:tens-alg}$'$}
{\it Every simple weight $\germ sl (2,\C)$-module $W$ whose support is included in the support of $V$ appears as a quotient of the algebraic tensor product $V$.}
\end{theoi}

Then section 5 is devoted to the study of the discrete part in the Hilbert space $V$ (the completion of the algebraic tensor product $V$). The main result is theorem \ref{thm:disc-spec}. Let us state a particular case of this theorem.
\begin{theoi}{\ref{thm:disc-spec}$'$}
\begin{enumerate}
 \item {\it If $0<\mu+\lambda<1$, then the Hilbert representation $\pi^-_{\mu}\otimes \pi^+_{\lambda}$ contains the representation $\pi^c_{\lambda,\mu}$,  belonging to the complementary series.}
 \item {\it If $0<\sigma+\tau+\lambda<1$, then the Hilbert representation $\pi^c_{\sigma,\tau}\otimes \pi^+_{\lambda}$ contains the representation $\pi^c_{\sigma+\lambda,\tau}$, belonging to the complementary series.}
 \item {\it If $1<\sigma+\tau-\lambda<2$, then the Hilbert representation $\pi^c_{\sigma,\tau}\otimes \pi^+_{\lambda}$ contains the representation $\pi^c_{\sigma,\tau-\lambda}$, belonging to the complementary series.}
 \end{enumerate}
\end{theoi}
In section 5, we also give an explicit generator for all submodules in the discrete spectrum of $V$. As a consequence we prove proposition \ref{prop:Cinfty}. A particular case of this proposition in the above setting is the following 
\begin{propi}{\ref{prop:Cinfty}$'$}
{\it If $0<\lambda+\mu<1$ (resp. $0<\sigma+\tau+\lambda<1$, and $1<\sigma+\tau-\lambda<2$), then the Hilbert submodule $\pi^c_{\lambda,\mu}$ (resp. $\pi^c_{\sigma+\lambda,\tau}$, and $\pi^c_{\sigma,\tau-\lambda}$) of the Hilbert representation $\pi^-_{\mu}\otimes \pi^+_{\lambda}$ (resp. $\pi^c_{\sigma,\tau}\otimes \pi^+_{\lambda}$) intersects trivially the set of smooth vectors in $\pi^-_{\mu}\otimes \pi^+_{\lambda}$ (resp. $\pi^c_{\sigma,\tau}\otimes \pi^+_{\lambda}$).}
\end{propi}

The proofs involve the algebraic structure of weight modules and asymptotic analysis of hypergeometric functions.


\section{Weight modules}\label{sec:}


Let $\germ g $ denote a reductive Lie algebra and $\cal U (\germ g )$ denote its universal enveloping algebra. Let $\germ h $ be a fixed Cartan subalgebra and denote by $\cal R $ the corresponding set of roots. For $\alpha \in \cal R $, we denote by $\germ g _{\alpha}$ the root space for the root $\alpha$.


\subsection{The category of weight modules}\label{ssec:WM}


\begin{defi}\label{def:WM}
A $\germ g $-module $M$ is a \emph{weight module} if it is finitely generated, and $\germ h $--diagonalizable in the sense that 
$$M=\oplus_{\lambda \in \germ h ^*}\: M_{\lambda}, \quad \mbox{where } M_{\lambda}=\{m\in M \: : \: H\cdot m = \lambda(H)m, \: \forall \: H\in \germ h \},$$ with weight spaces $M_{\lambda}$ of finite dimension.
\end{defi}
\begin{rema}
Note that we require finite dimensional weight spaces in our definition, which is not always the case in the literature. This category also appears as a particular case of several other categories (e.g. \cite{PS02,PZ04a} or \cite{DFO94,FMO10}).
\end{rema}

 The set of all weight modules forms a full subcategory of the category of all modules, denoted by $\cal M  (\germ g , \germ h )$. 
Given a weight module $M$, we call \emph{support} of $M$ the set
$$Supp(M)=\{\lambda \in\ \germ h ^*\ :\ M_{\lambda}\not=0\}.$$
The \emph{degree} of a weight module $M$ is the (possibly infinite) number 
$$deg(M)=\sup_{\lambda \in \germ h ^*} \: \{dim(M_{\lambda})\}.$$ For instance, a \emph{degree one module} is a weight module whose all non zero weight spaces are 1-dimensional. Such modules have been classified by Benkart, Britten and Lemire in \cite{BBL97}. They will be the main object of investigation of this paper.


\subsection{The modules of degree $1$}\label{ssec:deg1}


Let us review the classification of degree one modules for simple Lie algebras. First we have the following

\begin{theo}[Benkart, Britten, Lemire {\cite[prop. 1.4]{BBL97}}]
 Let $\germ g $ be a simple Lie algebra. Let $M$ be a simple infinite dimensional degree 1 weight module. Then
 \begin{enumerate}
 \item The Lie algebra $\germ g $ is of type $A$ or $C$.
 \item The Gelfand-Kirilov dimension of $M$ is given by the rank of $\germ g $.
\end{enumerate}
\end{theo}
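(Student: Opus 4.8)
The plan is to trap the Gelfand--Kirillov dimension $\mathrm{GKdim}(M)$ between $\mathrm{rank}(\mathfrak g)$ from above and $h^{\vee}-1$ from below, where $h^{\vee}$ is the dual Coxeter number of $\mathfrak g$, and then to read off both assertions from the resulting inequality together with a short look-up of dual Coxeter numbers.

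For the upper bound, the first observation is that simplicity forces $\mathrm{Supp}(M)$ into a single coset $\xi+Q$ of the root lattice $Q$: if $0\neq v\in M_{\xi}$, the sum of all $M_{\mu}$ with $\mu\in\xi+Q$ is a nonzero submodule, hence all of $M$. Fix a finite-dimensional generating subspace $V_{0}$ and the PBW filtration $\{\mathcal U_{n}\}$ of $\mathcal U(\mathfrak g)$; an element of $\mathcal U_{n}$ changes weights by at most $Cn$, with $C=\max_{\alpha\in\mathcal R}\|\alpha\|$, so $\mathcal U_{n}V_{0}$ lies in the span of the $M_{\mu}$ with $\mu$ within distance $Cn$ of a fixed weight. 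A ball of radius $Cn$ contains $O(n^{r})$ points of the rank-$r$ coset $\xi+Q$, and $\dim M_{\mu}\leq 1$ because $M$ has degree one; hence $\dim\mathcal U_{n}V_{0}=O(n^{r})$ and $\mathrm{GKdim}(M)\leq r=\mathrm{rank}(\mathfrak g)$.

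For the lower bound I would invoke the theorem (due in essence to Joseph, through the theory of the minimal nilpotent orbit $\mathcal O_{\min}$) that any infinite-dimensional module over a simple Lie algebra has Gelfand--Kirillov dimension at least $\tfrac12\dim\mathcal O_{\min}=h^{\vee}-1$. In outline: the annihilator $I=\mathrm{Ann}_{\mathcal U(\mathfrak g)}(M)$ is a primitive ideal whose associated variety is the closure of a single nilpotent coadjoint orbit, and this orbit is nonzero because $M$ is infinite-dimensional (otherwise $\mathcal U(\mathfrak g)/I$ would be finite-dimensional and so would its simple module $M$); since $\mathfrak g$ is simple, every nonzero nilpotent orbit contains $\mathcal O_{\min}$ in its closure, so $\mathrm{GKdim}(\mathcal U(\mathfrak g)/I)\geq 2(h^{\vee}-1)$, and $\mathrm{GKdim}(M)\geq\tfrac12\,\mathrm{GKdim}(\mathcal U(\mathfrak g)/I)$ because $M$ is a faithful module over the almost-commutative algebra $\mathcal U(\mathfrak g)/I$.

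Putting the bounds together gives $h^{\vee}-1\leq\mathrm{GKdim}(M)\leq\mathrm{rank}(\mathfrak g)$. A case-by-case inspection shows $h^{\vee}-1=\mathrm{rank}(\mathfrak g)$ exactly for types $A$ and $C$, whereas $h^{\vee}-1>\mathrm{rank}(\mathfrak g)$ for $B_{n}$ ($n\geq3$), $D_{n}$ ($n\geq4$), $E_{6}$, $E_{7}$, $E_{8}$, $F_{4}$, $G_{2}$; so the inequality can hold only in types $A$ and $C$, which is (1), and then the two bounds collapse to $\mathrm{GKdim}(M)=\mathrm{rank}(\mathfrak g)$, which is (2). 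The step most in need of care is the lower bound: one must check that the infinite-dimensionality of $M$ really yields a nonzero associated variety and that the factor-$\tfrac12$ estimate applies, and this is precisely where the simplicity of $\mathfrak g$ (hence the existence of a unique minimal nonzero nilpotent orbit, dominated by every other) is used. A more self-contained alternative, presumably nearer the original argument, would instead use Fernando's parabolic-induction theorem to reduce to a torsion-free module, recall that torsion-free simple modules occur only in types $A$ and $C$, and estimate weight multiplicities directly along $\mathfrak{sl}_2$-strings; there the obstacle migrates to the combinatorial bookkeeping of how multiplicities are forced to grow outside types $A$ and $C$.
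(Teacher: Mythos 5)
Your argument is essentially correct, but note that it is necessarily a different route from the paper's: the paper does not prove this statement at all, it quotes it from Benkart--Britten--Lemire \cite[Prop.~1.4]{BBL97}, whose own approach (like the alternative you sketch at the end) goes through Fernando-type reduction to torsion-free simple modules, which exist only in types $A$ and $C$, together with the explicit degree-one realizations inside the Weyl algebra, from which the Gelfand--Kirillov dimension is read off directly. Your sandwich $h^{\vee}-1\le \mathrm{GKdim}(M)\le \mathrm{rank}(\mathfrak{g})$ is a clean conceptual substitute: the upper bound (support in a single coset of the root lattice, weight displacement at most $Cn$ under $\mathcal{U}_n$, $O(n^{r})$ lattice points, multiplicity one) is complete and elementary, and the numerology $\dim\mathcal{O}_{\min}=2h^{\vee}-2$ with $h^{\vee}-1=\mathrm{rank}(\mathfrak{g})$ exactly in types $A$ and $C$ (the coincidences $B_2\cong C_2$, $D_3\cong A_3$ are harmless) is correct, so the case check delivers both assertions at once. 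What your route buys is brevity and a transparent reason why only $A$ and $C$ survive; what it costs is reliance on imported machinery: that the associated variety of the primitive annihilator is a nonzero $G$-stable subvariety of the nilpotent cone when $M$ is infinite dimensional (here Joseph's irreducibility theorem is not needed, only $G$-invariance), and above all the inequality $\mathrm{GKdim}(\mathcal{U}(\mathfrak{g})/I)\le 2\,\mathrm{GKdim}(M)$ for a simple module $M$ with annihilator $I$. That last step is a genuine Bernstein-type theorem (Joseph; see Jantzen's or Krause--Lenagan's books), not a formal consequence of faithfulness over an almost-commutative algebra, as your parenthesis might suggest; and the blanket claim that ``any infinite-dimensional module'' has $\mathrm{GKdim}\ge h^{\vee}-1$ should be restricted to simple modules (which is all you use). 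With those citations made precise, your proof stands.
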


\subsubsection{Modules over the Weyl algebra}\label{sssec:Weyl}

Let $n$ be a positive integer. Recall that the Weyl algebra $W_{n}$ is the associative algebra generated by the $2n$ generators $\{q_{i}, \: p_{i}, \: 1\leq i\leq n\}$ submitted to the following relations:
$$[q_{i},q_{j}]=0=[p_{i},p_{j}],\quad [p_{i},q_{j}]=\delta_{i,j}\cdot 1,$$ where the bracket is the usual commutator for associative algebras.

Define a vector space as follows. Fix some $a\in \C^n$. Let $$\cal K (a)=\left\{k\in \Z^n \: :\: \mbox{if } a_{i}\in \Z, \mbox{ then } a_{i}+k_{i}<0 \iff a_{i}<0\right\}.$$ Now our vector space $W(a)$ is the $\C$-vector space whose basis is indexed by $\cal K (a)$. For each $k\in \cal K (a)$, we fix a basis vector $x(k)$. Let $(\epsilon_i)_{1\leq i\leq n}$ denote the canonical basis of $\Z^n$. Define an action of $W_{n}$ on $W(a)$ by the following recipe:
$$\begin{array}{ccl}
q_{i}\cdot x(k) & = & \left\{ \begin{array}{lc} (a_{i}+k_{i}+1)x(k+\epsilon_{i}) & \mbox{if } a_{i}\in \Z_{<0}\\ x(k+\epsilon_{i}) & \mbox{otherwise} \end{array}\right. ,\\
 & & \\
p_{i}\cdot x(k) & = & \left\{ \begin{array}{lc} x(k-\epsilon_{i}) & \mbox{if } a_{i}\in \Z_{<0}\\ (a_{i}+k_{i})x(k-\epsilon_{i}) & \mbox{otherwise} \end{array}\right.
\end{array}$$
This basis shall be refered to as the \emph{standard basis} of $W(a)$.

Then we have:
\begin{theo}[Benkart,Britten,Lemire {\cite[thm 2.9]{BBL97}}]\label{thmBBLWeyl}
Let $a\in \C^n$. Then $W(a)$ is a simple $W_{n}$-module.
\end{theo}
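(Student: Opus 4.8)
The plan is the standard one for modules of this kind: given a nonzero submodule $N\subseteq W(a)$, first locate a single standard basis vector inside $N$, and then show that the $W_n$-action spreads that vector over the whole basis.

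For the first step the key observation is that the operators $h_i:=q_ip_i\in W_n$ act diagonally on the standard basis. Inspecting the two branches of the defining formulas separately, one gets in every case
\[
h_i\cdot x(k)=(a_i+k_i)\,x(k),\qquad k\in\cal K (a),\ 1\le i\le n.
\]
Hence the $x(k)$ are simultaneous eigenvectors for the commuting family $h_1,\dots,h_n$, and since $k\mapsto(a_1+k_1,\dots,a_n+k_n)$ is injective on $\Z^n$, distinct basis vectors carry distinct eigenvalue tuples. Therefore any $h_i$-stable subspace, in particular any submodule $N$, is spanned by the basis vectors it contains: writing $0\ne v=\sum_{k\in F}c_kx(k)\in N$ with $F$ finite and every $c_k\ne 0$, a suitable polynomial in $h_1,\dots,h_n$ (Lagrange interpolation at the distinct points $(a_i+k_i)_i$, $k\in F$) applied to $v$ isolates a single summand, so $x(k)\in N$ for some, in fact every, $k\in F$.

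For the second step I claim that from any $x(k_0)\in N$ one reaches a nonzero multiple of every $x(k)$, $k\in\cal K (a)$. This is where the precise definition of $\cal K (a)$ enters: for each $i$ the set of admissible $i$-th coordinates is an interval of $\Z$ (all of $\Z$ when $a_i\notin\Z$, the half-line $k_i\ge -a_i$ when $a_i\in\Z_{\ge 0}$, the half-line $k_i\le -a_i-1$ when $a_i\in\Z_{<0}$), and a short case check shows that $q_i\cdot x(k)$ is a \emph{nonzero} multiple of $x(k+\epsilon_i)$ exactly when $k+\epsilon_i\in\cal K (a)$, and likewise $p_i\cdot x(k)$ is a nonzero multiple of $x(k-\epsilon_i)$ exactly when $k-\epsilon_i\in\cal K (a)$ --- the structure constant $a_i+k_i$ (resp. $a_i+k_i+1$) vanishes precisely at the boundary, i.e.\ precisely when the target would leave $\cal K (a)$. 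Since $\cal K (a)$ is a product of intervals, any two of its points are joined by a path of unit steps staying inside it; applying the corresponding sequence of $q_i$'s and $p_i$'s to $x(k_0)$ then yields a nonzero multiple of $x(k)$. Hence $N=W(a)$, which proves simplicity.

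The only delicate point is the boundary bookkeeping in the second step: one must verify that a structure constant can vanish only at a lattice point from which the corresponding raising or lowering move already exits $\cal K (a)$, so that no basis vector of $W(a)$ gets cut off from the others. This is exactly the property that the definition of $\cal K (a)$ is engineered to enforce; granting it, the rest is a routine computation with the explicit formulas.
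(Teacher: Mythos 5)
Your proof is correct: the commuting operators $q_ip_i$ act diagonally on the standard basis with eigenvalue tuples $(a_i+k_i)_i$ that are pairwise distinct, so any nonzero submodule contains some $x(k)$, and your boundary check that a structure constant of $q_i$ or $p_i$ vanishes precisely when the move would exit $\mathcal{K}(a)$, combined with the fact that $\mathcal{K}(a)$ is a product of intervals, lets you propagate that basis vector to all of $W(a)$. Note that the paper itself gives no proof of this statement --- it is quoted from Benkart, Britten and Lemire \cite[thm 2.9]{BBL97} --- and your argument is essentially the standard one used there.
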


\subsubsection{Type A case}\label{sssec:deg1A}

In this section only, $\germ g $ denotes a simple Lie algebra of type $A$. We shall construct weight $\germ g $-modules of degree $1$ by using the previous construction. We realize the Lie algebra $\germ g $ inside some $W_{n}$. Let $n-1$ be the rank of $\germ g $. Then, we can embed $\germ g $ into $W_{n}$ as follows: to an elementary matrix $E_{i,j}$ we associate the element $q_{i}p_{j}$ of $W_{n}$. This is easily seen to define an embedding of $\germ g $ into $W_{n}$.  Let $\cal K _{0}(a)=\left\{k\in \cal K (a)\: :\: \displaystyle\sum_{i=1}^n\: k_{i}=0\right\}$. Let $N(a)$ be the subspace of $W(a)$ whose basis is indexed by $\cal K _{0}(a)$. Then we have the following:

\begin{theo}[Benkart,Britten,Lemire {\cite[thm 5.8]{BBL97}}]\label{thm:BBLA}\hfill{ }
\begin{enumerate}
\item The vector subspace $N(a)$ of $W(a)$ is a simple weight $\germ sl (n,\C) $-module of degree $1$.
\item Conversely if $M$ is a simple weight $\germ sl (n,\C) $-module of degree $1$, then there exist $a=(a_1,\ldots, a_n)\in \C^n$, such that the module $M$ is isomorphic to $N(a)$.
\end{enumerate}
\end{theo}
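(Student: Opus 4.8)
The plan is to prove the two assertions separately, the converse being the substantive one.

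\emph{Assertion (1): $N(a)$ is simple of degree $1$.} First I would check that $E_{i,j}\mapsto q_ip_j$ is a Lie algebra morphism, by comparing $[q_ip_j,q_kp_l]=\delta_{jk}q_ip_l-\delta_{il}q_kp_j$ with the commutator of elementary matrices, and restrict it to $\germ sl (n,\C)$. The associative algebra $W_n=\bigoplus_{m\in\Z}(W_n)_m$ is graded by ($q$-degree)$-$($p$-degree), and $W(a)=\bigoplus_m W(a)_m$, where $W(a)_m$ is the span of the $x(k)$ with $\sum_i k_i=m$, is a compatible grading; since the image of $\cal U (\germ sl (n,\C))$ lies in $(W_n)_0$, the piece $N(a)=W(a)_0$ is an $\germ sl (n,\C)$-submodule of $W(a)$. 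A one-line computation from the defining recipe gives $q_ip_i\cdot x(k)=(a_i+k_i)x(k)$ in both cases, so each $x(k)$ is a weight vector, and distinct $k\in\cal K _{0}(a)$ give distinct weights (if the tuples $(a_i+k_i)_i$ and $(a_i+k'_i)_i$ agree up to a common additive constant, that constant vanishes because $\sum k_i=\sum k'_i=0$); together with $0\in\cal K _{0}(a)$ this gives $deg(N(a))=1$. For simplicity I would use Theorem \ref{thmBBLWeyl}: if $0\neq v\in W(a)_0$ then $W_nv=W(a)$, and comparing homogeneous components forces $(W_n)_0v=W(a)_0$; since $(W_n)_0$ is generated as an algebra by the elements $q_ip_j$ (any balanced normally ordered monomial is expressible in them by induction on the degree) and since $\sum_i q_ip_i$ acts on $W(a)_0$ by the scalar $\sum_i a_i$, the subalgebra of $\mathrm{End}(W(a)_0)$ generated by $\germ sl (n,\C)$ equals that generated by $\cal U (\germ gl (n,\C))$, namely all of $(W_n)_0$; hence $N(a)$ is simple over $\germ sl (n,\C)$.

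\emph{Assertion (2): every simple degree-one module is some $N(a)$.} Let $M$ be simple of degree $1$; we may assume $M$ infinite dimensional, as in the quoted Benkart--Britten--Lemire theorem. Since $M$ is generated by any weight vector, $Supp(M)$ lies in one coset $\mu^0+Q$ of the root lattice, and for each occupied weight $\mu$ the root vector $E_{i,j}$ carries the line $M_\mu$ into $M_{\mu+\epsilon_i-\epsilon_j}$ by a scalar $c_{i,j}(\mu)$. I would first pin down these scalars: evaluating $[E_{i,j},E_{j,i}]=E_{i,i}-E_{j,j}$ on $M_\mu$ gives the recursion $c_{j,i}(\mu)\,c_{i,j}(\mu-\epsilon_i+\epsilon_j)-c_{i,j}(\mu)\,c_{j,i}(\mu+\epsilon_i-\epsilon_j)=$ (eigenvalue of $E_{i,i}-E_{j,j}$ on $M_\mu$), and the Serre-type relations among the simple-root vectors remove the remaining rescaling freedom, so that after a suitable normalization of the chosen weight vectors the whole action is determined by the tuple $a:=(\mu^0_1,\dots,\mu^0_n)$ (any lift of $\mu^0$ to a character of the diagonal $\germ gl (n,\C)$) and by the set $S\subseteq\{k\in\Z^n:\sum k_i=0\}$ of shifts $k=\mu-\mu^0$ that actually occur. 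Next I would identify $S$: simplicity forces $S$ to be connected under the moves $\pm(\epsilon_i-\epsilon_j)$ and maximal with the property that no transition coefficient interior to it vanishes, and the places where $c_{i,j}$ is forced to vanish are precisely the integer thresholds written into the definition of $\cal K (a)$; adjusting the lift $a$ of $\mu^0$ within its (additive, multiples of $(1,\dots,1)$) ambiguity, one arranges $S=\cal K _{0}(a)$. The map $x(k)\mapsto v_{\mu^0+k}$, with the normalization of Assertion (1), is then an $\germ sl (n,\C)$-isomorphism $N(a)\to M$, since both modules carry identical transition scalars.

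\emph{Main obstacle.} The delicate point is the rigidity step in Assertion (2): showing that the scalars $c_{i,j}(\mu)$ cannot differ from those attached to some $N(a)$, and correlatively that the occupied set must be a full chamber $\cal K _{0}(a)$ rather than a proper connected subset. This is where the full type-$A$ relations (not merely the $\germ sl (2,\C)$-triples) and simplicity must be used together; once it is in place, the construction of the isomorphism is routine bookkeeping.
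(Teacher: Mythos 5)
The paper does not prove this statement at all: it is quoted verbatim from Benkart--Britten--Lemire \cite[thm 5.8]{BBL97}, so there is no internal proof to compare with, and your proposal has to stand on its own. Your treatment of assertion (1) is essentially sound: the verification that $E_{i,j}\mapsto q_ip_j$ is a morphism, the grading argument showing $N(a)=W(a)_0$ is a submodule, the computation $q_ip_i\cdot x(k)=(a_i+k_i)x(k)$ with the $\sum k_i=0$ trick to separate weights, and the deduction of simplicity from Theorem \ref{thmBBLWeyl} via the facts that $(W_n)_0$ is generated by the $q_ip_j$ and that the central element $\sum_i q_ip_i$ acts by the scalar $\sum_i a_i$, are all correct and complete in outline.

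Assertion (2), however, is where the theorem actually lives, and there your argument has a genuine gap which you yourself flag as the ``main obstacle''. The recursion coming from $[E_{i,j},E_{j,i}]=E_{i,i}-E_{j,j}$ determines the gauge-invariant products $c_{i,j}(\mu)\,c_{j,i}(\mu+\epsilon_i-\epsilon_j)$ only up to one free constant per $(i,j)$-root string (exactly as for $\mathfrak{sl}(2,\C)$, where the weight coset and the Casimir are independent parameters), and the assertion that ``Serre-type relations remove the remaining rescaling freedom'' so that the action is determined by $a$ and the support is precisely the content to be proved, not a consequence one can wave at: one must show that the degree-one hypothesis for the whole of $\mathfrak{sl}(n,\C)$, combined with simplicity, forces all these string constants to be compatible and equal to those of some $N(a)$, and likewise that the occupied set of shifts is a full set $\cal K _0(a)$ rather than a proper connected subset with accidental vanishing of transition scalars. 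This rigidity is the substance of \cite[thm 5.8]{BBL97} and it is simply asserted here. A second, smaller point: you ``assume $M$ infinite dimensional'', but the statement as transcribed in the paper has no such hypothesis; in fact the finite-dimensional case genuinely needs it (for $\mathfrak{sl}(4,\C)$ the module $\Lambda^2\C^4$ is simple of degree $1$ but is not isomorphic to any $N(a)$, since the finite-dimensional $N(a)$ in type $A$ are symmetric powers), so either the hypothesis must be added, as in the type $C$ statement \ref{thm:BBLC}, or the discrepancy addressed rather than passed over.
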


\subsubsection{Type C case}\label{sssec:deg1C}

In this section only, $\germ g $ denotes a simple Lie algebra of type $C$. We shall construct weight $\germ g $-modules of degree $1$ in the same way as above. So we need to realize the Lie algebra $\germ g $ inside some $W_{n}$. Let $n$ be the rank of $\germ g $. Then, $span_{\C}\{q_{i}p_{j}, p_{i}p_{j}, q_{i}q_{j}, \: 1\leq i,j\leq n\}$ is a subalgebra of $W_{n}$ isomorphic to $\germ g $. More specifically, the Cartan subalgebra is given by 
$$span_{{\C}}\left(\left\{q_{i}p_{i}-q_{i+1}p_{i+1}, \: i=1,\ldots,n-1\right\}\cup\left\{q_{n}p_{n}+\frac{1}{2}\right\}\right),$$ the $n-1$ weight vectors corresponding to the short simple roots are given by $q_{i}p_{i+1}$ with $i=1,\ldots ,n-1$, and the weight vector corresponding to the long simple root is given by $\frac{1}{2}q_{n}^2$. Note that this is not the same kind of embedding as for Lie algebras of type A. 

Let $\cal K _{\bar 0}(a)=\left\{k\in \cal K (a)\: :\: \displaystyle\sum_{i=1}^n\: k_{i}\in 2\Z\right\}$. Let $M(a)$ be the subspace of $W(a)$ whose basis is indexed by $\cal K _{\bar 0}(a)$.Then we have the following:

\begin{theo}[Benkart,Britten,Lemire {\cite[thm 5.21]{BBL97}}]\label{thm:BBLC}\hfill{ }
\begin{enumerate}
\item The vector subspace $M(a)$ of $W(a)$ is a simple weight $\germ sp (n,\C) $--module of degree $1$.
\item Conversely if $M$ is an infinite dimensional simple weight $\germ sp (n,\C) $--module of degree $1$, then there exists $a=(a_{1},\ldots , a_{n})\in \C^n$ such that $M\cong M(a)$. 
\end{enumerate}
\end{theo}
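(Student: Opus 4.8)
The plan is to mimic the argument from the type $A$ situation, adapting it to the symplectic embedding $\germ sp (n,\C)\hookrightarrow W_n$ described above. For part (1), I would first check that $M(a)$ is $\germ sp (n,\C)$-stable inside $W(a)$: the Cartan generators $q_ip_i-q_{i+1}p_{i+1}$ and $q_np_n+\tfrac12$ clearly preserve the grading by $\sum k_i$, while the root vectors, which are spanned by the quadratic monomials $q_ip_j$, $q_iq_j$ and $p_ip_j$, each shift $\sum k_i$ by $0$ or $\pm 2$; hence each of them maps the span of $\cal K _{\bar 0}(a)$ into itself. Next I would identify the weights: since the Cartan subalgebra acts on $x(k)$ by scalars read off from the formulas for $q_ip_i$, the weight of $x(k)$ is an affine-linear function of $k\in \cal K _{\bar 0}(a)$, and distinct $k$'s in $\cal K _{\bar 0}(a)$ give distinct weights (the map $k\mapsto$ weight is injective because the differences $k_i-k_{i+1}$ together with $k_n$ determine $k$). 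This gives $\deg M(a)=1$ and finite-dimensional weight spaces, so $M(a)\in\cal M (\germ sp (n,\C),\germ h )$ once we check finite generation, which follows from simplicity.

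For simplicity of $M(a)$ as an $\germ sp (n,\C)$-module, I would argue that any nonzero submodule is spanned by a subset of the standard basis vectors $x(k)$ (because it is weight-graded with one-dimensional weight spaces), and then show that from any single $x(k)$ with $k\in\cal K _{\bar 0}(a)$ one can reach every other $x(k')$, $k'\in\cal K _{\bar 0}(a)$, by repeatedly applying the quadratic operators $q_iq_j$, $p_ip_j$, $q_ip_j$. The point is that these operators implement all the moves $k\mapsto k\pm(\epsilon_i+\epsilon_j)$, $k\mapsto k\pm(\epsilon_i-\epsilon_j)$ within $\cal K _{\bar 0}(a)$, and by Theorem \ref{thmBBLWeyl} the full $W_n$-module $W(a)$ is simple, so no coefficient obstruction can occur except possibly at the ``walls'' where some $a_i\in\Z_{<0}$; there the normalization in the definition of the $W_n$-action is precisely designed so that one does not get stuck. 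Connectivity of the even sublattice coset $\cal K _{\bar 0}(a)$ under the generating set $\{\pm\epsilon_i\pm\epsilon_j\}$ is an elementary lattice argument.

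For part (2), the converse, I would invoke the general structure theory: by the first Benkart--Britten--Lemire theorem quoted above, a simple infinite-dimensional degree one weight module for a type $C$ Lie algebra has Gelfand--Kirillov dimension equal to the rank $n$; such a module, restricted along the embedding into $W_n$, gives rise to a (localization of a) module over the Weyl algebra, and by the classification of simple weight modules over $W_n$ one shows it must be a twist of some $W(a)$. Restricting the support back to the $\germ sp (n,\C)$-weight lattice forces the support to be a single coset of the form $\cal K _{\bar 0}(a)$, and matching the Cartan action pins down $a\in\C^n$ up to the equivalences built into the definition of $\cal K (a)$; hence $M\cong M(a)$. The main obstacle I anticipate is the converse direction: one must carefully track how the $\germ sp(n,\C)$-weight data determines the Weyl-algebra parameter $a$, in particular handling the half-integer shift coming from the generator $q_np_n+\tfrac12$ and the integrality conditions defining $\cal K (a)$, so that the reconstruction of $a$ is well posed. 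Since all of this is exactly \cite[thm 5.21]{BBL97}, for our purposes it suffices to cite it, and the verification in part (1) is the only computation we actually need downstream.
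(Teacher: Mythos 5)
The paper offers no proof of this statement at all: it is imported verbatim from Benkart--Britten--Lemire \cite[thm 5.21]{BBL97}, so your concluding decision to rest on that citation is exactly the paper's treatment. Your independent sketch of part (1) (stability of $M(a)$ under the quadratic generators, injectivity of $k\mapsto\text{weight}$, connectivity of $\mathcal{K}_{\bar 0}(a)$ under $\pm\epsilon_i\pm\epsilon_j$ with coefficient vanishing only at the walls of $\mathcal{K}(a)$) is reasonable, though note that simplicity of $W(a)$ over $W_n$ does not by itself rule out obstructions for the $\mathfrak{sp}(n,\C)$-action, and the converse in part (2) is precisely where the cited theorem carries the real weight.
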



\subsection{The case of $\mathfrak{sl}(2,\C)$}\label{ssec:sl2}


In this section, we review the classification of all weight modules for $\germ g =\germ sl (2,\C)$. We shall consider the standard $\germ sl (2,\C)$-triple $(F,H,E)$, given by:
$$F=\left(\begin{array}{cc}
 0 & 0\\
 1 & 0
\end{array}\right),\ H=\left(\begin{array}{cc}
 1 & 0\\
 0 & -1
\end{array}\right),\ E=\left(\begin{array}{cc}
 0 & 1\\
 0 & 0
\end{array}\right).$$
 We therefore have the following relations:
$$[H,E]=2E, \quad [H,F]=-2F, \quad [E,F]=H.$$

\begin{prop}\label{prop:WMsl2}
Let $M$ be a simple weight $\germ sl (2,\C)$-module. Then $deg(M)=1$.
\end{prop}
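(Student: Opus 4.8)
The plan is to show that every weight space $M_\lambda$ of a simple weight $\germ sl(2,\C)$-module is at most one-dimensional by exploiting the action of the Casimir element together with the $\germ sl(2,\C)$-triple relations. First I would recall that the Casimir $\Omega = EF + FE + \tfrac12 H^2$ lies in the center of $\cal U(\germ g)$, so by a version of Schur's lemma (the endomorphism ring of a simple module is a division ring, and here one checks it reduces to scalars because the module has a weight vector on which $\Omega$ acts) $\Omega$ acts on $M$ by a single scalar $c\in\C$. Fix a weight $\lambda$ with $M_\lambda\neq 0$ and pick any nonzero $v\in M_\lambda$. The idea is that $v$ determines the whole module: since $M$ is simple, $M = \cal U(\germ g)\cdot v$, and by the PBW theorem together with $H\cdot v=\lambda(H)v$, $M$ is spanned by the vectors $E^i v$ and $F^j v$ for $i,j\geq 0$. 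Each such vector lies in a distinct weight space (weights $\lambda+2i$, $\lambda-2j$), so each weight space $M_{\lambda+2i}$ is spanned by the single vector $E^i v$ (and similarly on the $F$-side). Hence $\dim M_\mu\leq 1$ for every $\mu$, which is exactly $deg(M)=1$.

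The one genuine point requiring care, and the step I expect to be the main obstacle, is justifying that $M = \cal U(\germ g)v$ for a single weight vector $v$ — equivalently, that any two nonzero vectors in the same weight space $M_\lambda$ are proportional. This does not follow formally from simplicity alone; it is where the structure of $\germ sl(2,\C)$ enters. I would argue as follows: let $v,w\in M_\lambda$ be nonzero. The submodule $\cal U(\germ g)v$ is all of $M$ by simplicity, and is spanned by $\{E^i v, F^j v\}$ as above; intersecting with the weight space $M_\lambda$ forces $M_\lambda$ to be spanned by those of these vectors lying in $M_\lambda$. Now $E^i v\in M_\lambda$ only if $i=0$, and likewise for $F$; but one must rule out "wrap-around", i.e. the possibility that $E^i v$ and $F^j v$ have equal weights for $i,j>0$ — which cannot happen since $\lambda+2i\neq \lambda-2j$ for $i,j\geq 0$ not both zero, as $2(i+j)\neq 0$ in $\C$. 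Therefore $M_\lambda=\C v$, and since $\lambda$ was arbitrary in the support, $\dim M_\mu\leq 1$ for all $\mu$.

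Finally I would package this cleanly. Because $M$ is a nonzero weight module it has some nonzero weight space, pick $v$ in it; then $M=\cal U(\germ g)v = \mathrm{span}\{E^iv : i\geq 0\}\oplus\mathrm{span}\{F^jv : j\geq 0\}$ as vector spaces (using PBW in the order $F,H,E$ and that $H$ acts by a scalar on each $E^iv$, absorbing the $H$'s), and the weight decomposition is then explicit: $M_{\lambda+2i}$ is spanned by $E^iv$ for $i\geq 0$ and by $F^{-i}v$ for $i\leq 0$, each at most one-dimensional, while $M_\mu=0$ for $\mu\notin\lambda+2\Z$. Hence $deg(M)=\sup_\mu\dim M_\mu\leq 1$; since $M$ is simple (hence nonzero), $deg(M)=1$. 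I do not anticipate needing the Casimir at all for the dimension count — it is cleaner to run everything through PBW and the weight grading — so I would likely omit the Casimir remark and keep the proof purely combinatorial in the monomials $E^iv$, $F^jv$.
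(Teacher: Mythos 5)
There is a genuine gap, and it sits exactly at the point you flag as "the one genuine point requiring care" and then dismiss. From $M=\cal U (\germ g )\cdot v$ and PBW (in the order $F,H,E$) you only get that $M$ is spanned by the vectors $F^jH^kE^i\cdot v=\mbox{(scalar)}\,F^jE^i\cdot v$, \emph{not} by the vectors $E^i v$ and $F^j v$ alone: the span of $\{E^iv,F^jv\}$ is not a submodule, since e.g. $F\cdot E v$ has no reason to lie in it. Consequently the weight-$\lambda$ component of $\cal U (\germ g )\cdot v$ is spanned by all the mixed monomials $F^iE^i\cdot v$, $i\geq 0$, and the real issue is to show that these are proportional to $v$. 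Your "wrap-around" discussion ($\lambda+2i\neq\lambda-2j$) addresses a non-problem and misses this one; as written, the claim $M_\lambda=\C v$ is assumed rather than proved, so the argument is circular.

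This is precisely where the Casimir, which you propose to drop, is needed. The paper's proof runs: $\Omega=\frac14H^2+\frac12H+FE$ is central, so by (an infinite-dimensional version of) Schur's lemma it acts on the simple module $M$ by a scalar; since $H$ acts on $M_\lambda$ by the weight, $FE=\Omega-\frac14H^2-\frac12H$ acts on $M_\lambda$ by a scalar, hence so does every element of the commutant $\cal U (\germ g )_0$ of $\C H$ (which is generated by $H$ and $FE$, and in particular contains each $F^iE^i$ modulo lower terms). Then, given $v,w\in M_\lambda$ nonzero, simplicity provides $u\in\cal U (\germ g )$ with $u\cdot v=w$; decomposing $u$ under the adjoint action of $H$, only its weight-zero component contributes to the $M_\lambda$-part, so one may take $u\in\cal U (\germ g )_0$, and since $u$ acts by a scalar on $M_\lambda$ we get $w\in\C v$. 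If you want to avoid quoting the structure of $\cal U (\germ g )_0$, it suffices to show by induction, using $[E,F]=H$ and the scalar action of $\Omega$ and $H$, that $F^iE^i\cdot v\in\C v$; either way, some input beyond PBW and the weight grading is indispensable, and your purely combinatorial version does not close.
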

\begin{proof}
Recall that $\Omega = \frac{1}{4}H^2+\frac{1}{2}H+FE$ is in the center of the universal enveloping algebra of $\germ sl (2,\C)$. Therefore, $M$ being simple, $\Omega$ acts as a scalar operator. On the other hand, as $M$ is a weight module, $H$ acts on each weight space by some constant (the weight). Therefore, on each weight space, $FE$ acts by some constant. From this, we conclude that $\cal U (\germ g )_{0}$, the commutant of $\C H$, acts by some constant on each weight space. But, since $M$ is simple, given two non zero vectors $v$ and $w$ in the same weight space, there should exist some element $u\in \cal U (\germ g )$ sending $v$ to $w$. The fact that $v$ and $w$ have the same weight forces $u$ to be in the commutant of $\C H$. From the above we know that $u$ acts by some constant. This forces $v$ and $w$ to be proportional and therefore the corresponding weight space is $1$-dimensional. This completes the proof.

\end{proof}

For a simple weight module $M$, the action of $\Omega$ on $M$ is called the \emph{infinitesimal character}. From theorem \ref{thm:BBLA}, the simple weight modules are indexed by $a=(a_{1},a_{2})\in \C^2$. Recall that we set $$\cal K _0(a)=\left\{k\in\Z^2\ :\ \mbox{if } a_i\in\Z, \mbox{ then } a_i+k_i<0 \iff a_i<0\right\}.$$ This reduces here to
$$\cal K _0(a)=\left\{k\in \Z \: :\: \mbox{if } a_{i}\in \Z, \mbox{ then } a_{i}+(-1)^{i-1}k<0 \iff a_{i}<0\right\}.$$ Recall then that the weight module $N(a)$ has a basis $x(k)$ indexed by $\cal K _0(a)$. We shall consider 4 cases:
\begin{enumerate}[(I)]
 \item Both $a_1$ and $a_2$ are not negative integers.
 \item $a_1$ is not a negative integer but $a_2$ is a negative integer.
 \item $a_2$ is not a negative integer but $a_1$ is a negative integer.
 \item Both $a_1$ and $a_2$ are negative integers.
\end{enumerate}
Then we have the following action of $\germ g $ on $N(a)$:
$$\begin{array}{cl}
(I) & \left\{\begin{array}{ccl}
H\cdot x(k) & = & (a_{1}-a_{2}+2k)x(k),\\
E\cdot x(k) & = & (a_{2}-k)x(k+1),\\
F\cdot x(k) & = & (a_{1}+k)x(k-1).
\end{array}\right.\\
 & \\
(II) & \left\{\begin{array}{ccl}
H\cdot x(k) & = & (a_{1}-a_{2}+2k)x(k),\\
E\cdot x(k) & = & x(k+1),\\
F\cdot x(k) & = & (a_{1}+k)(a_2-k+1)x(k-1).
\end{array}\right.\\
 & \\
(III) & \left\{\begin{array}{ccl}
H\cdot x(k) & = & (a_{1}-a_{2}+2k)x(k),\\
E\cdot x(k) & = & (a_1+k+1)(a_{2}-k)x(k+1),\\
F\cdot x(k) & = & x(k-1).
\end{array}\right.\\
\end{array}$$
$$\begin{array}{cl}
(IV) & \left\{\begin{array}{ccl}
H\cdot x(k) & = & (a_{1}-a_{2}+2k)x(k),\\
E\cdot x(k) & = & (a_1+k+1)x(k+1),\\
F\cdot x(k) & = & (a_{2}-k+1)x(k-1).
\end{array}\right.
\end{array}$$


\section{Unitarisability}


We keep the previous notations. Let $\germ g =\germ sl (2,\C)=span_{\C}\{H,E,F\}$. Set $h=-i\left(E-F\right)$, $e=\frac{1}{2}\left(-iH+E+F\right)$ and $f=\frac{1}{2}\left(iH+E+F\right)$. Then $span_{\R}\{h,e,f\}$ is a real Lie algebra isomorphic to $\germ su (1,1)$.

Let $G$ denote the simply connected Lie group with Lie algebra $\germ su (1,1)$. Recall the following result of Nelson \cite[cor. 9.3]{Ne59}:

\begin{theo}[Nelson]\label{thm:nelson}
 Let $\rho$ be a representation of $\germ su (1,1)$ on a Hilbert space by skew-symmetric operators with domain $\germ D $. Then there is a unitary representation $U$ of $G$ such that $\germ D $ is the space of infinitely differentiable vectors for $U$ and $dU(X)=\rho(X)$, $\forall\ X \in\ \germ g $ if and only if 
 $$A=\rho(h)^2+\rho(e)^2+\rho(f)^2$$ is essentially self-adjoint and $\germ D = \displaystyle\bigcap_{k=1}^{\infty}\ \germ D ({\bar A}^k)$, $\bar{A}$ being the closure of $A$.
\end{theo}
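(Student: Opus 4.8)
I indicate how one proves this; it is Nelson's theorem \cite{Ne59}. The plan is to treat the two implications separately, the forward one being essentially a citation and the converse carrying the content. Throughout, observe that $(h,e,f)$ is a real basis of $\germ su (1,1)$, so $\Delta=h^2+e^2+f^2\in\cal U (\germ g )$ is a \emph{Nelson Laplacian}: as a left-invariant differential operator on $G$ its principal symbol is a negative-definite quadratic form, hence $\Delta$ is elliptic, and $\rho(\Delta)=A$.

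For the converse, assume $A$ is essentially self-adjoint on $\germ D $ and $\germ D =\bigcap_k D({\bar A}^k)$. Since each $\rho(X)$ is skew-symmetric, $-A=-\rho(h)^2-\rho(e)^2-\rho(f)^2\ge 0$, so by the spectral theorem its closure $-{\bar A}$ is self-adjoint and generates a strongly continuous contraction semigroup $T_t=e^{t{\bar A}}$ ($t\ge 0$). First I would record the elementary bound $\sup_{\lambda\le 0}|\lambda|^{n}e^{t\lambda}=\bigl(\tfrac{n}{et}\bigr)^{n}$, which shows that $T_t v\in\bigcap_k D({\bar A}^k)$ with $\|{\bar A}^{n}T_t v\|\le\bigl(\tfrac{n}{et}\bigr)^{n}\|v\|$ for every vector $v$ and every $t>0$; in particular $T_t v$ is an analytic vector for ${\bar A}$, and such vectors are dense. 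The heart of the argument is to upgrade this to analyticity for the Lie-algebra representation $\rho$ itself. For that one proves \emph{elliptic domination estimates} in $\cal U (\germ g )$: there is a constant $C=C(\germ g )$ such that $\|\rho(u)v\|\le C^{n}\|(1-{\bar A})^{n}v\|$ on $\germ D $ for every $u\in\cal U (\germ g )$ of degree $\le 2n$. Expanding $(1-{\bar A})^{n}$ binomially and combining with the semigroup estimate, one obtains $\|\rho(X_{i_1})\cdots\rho(X_{i_n})T_t v\|\le C's^{n}n!\,\|v\|$ for a fixed $s=s(t)$; thus each $T_t v$ is an analytic vector for $\rho$, and $\rho$ admits a dense set of analytic vectors.

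Next I would invoke the integrability theorem for Lie-algebra representations possessing a dense set of analytic vectors (also due to Nelson): $\rho$ exponentiates to a unitary representation $U$ of the simply connected group $G$ with $dU=\rho$ on analytic vectors. Elliptic regularity for $\Delta$ then identifies the space $C^\infty(U)$ of smooth vectors with $\bigcap_k D\bigl(\overline{dU(\Delta)}^{\,k}\bigr)=\bigcap_k D({\bar A}^k)$, which equals $\germ D $ by hypothesis; this gives $dU(X)=\rho(X)$ on all of $\germ D =C^\infty(U)$. The forward implication is the easy direction: if $\rho=dU|_{C^\infty(U)}$ for a unitary representation $U$ of $G$, then $A=dU(\Delta)$, the operators $\rho(h),\rho(e),\rho(f)$ are skew-symmetric on $C^\infty(U)$, the Laplacian $dU(\Delta)$ is essentially self-adjoint there (standard, via G{\aa}rding vectors or analytic vectors), and $C^\infty(U)=\bigcap_k D({\bar A}^k)$ again by elliptic regularity.

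The main obstacle is the middle step: propagating analyticity from the single second-order operator $A$ to all noncommutative monomials in $\rho(h),\rho(e),\rho(f)$. This needs the combinatorial domination estimates in $\cal U (\germ sl (2,\C))$, together with careful bookkeeping of the constants so that the geometric growth defining an analytic vector survives iteration; everything else reduces to spectral theory or to a black-box appeal to the analytic-vector integrability theorem.
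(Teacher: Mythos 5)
The paper gives no proof of this statement---it is quoted verbatim from Nelson \cite[cor. 9.3]{Ne59}---and your outline reproduces the standard argument of that reference: skew-symmetry makes $\bar{A}\le 0$, so once $A$ is essentially self-adjoint the semigroup $e^{t\bar{A}}$ supplies a dense set of analytic vectors for $\bar{A}$ lying in $\bigcap_k \mathfrak{D}(\bar{A}^k)=\mathfrak{D}$, the elliptic domination estimates in $\mathcal{U}(\mathfrak{g})$ upgrade these to analytic vectors for $\rho$, Nelson's analytic-vector integrability theorem yields $U$, and elliptic regularity for $\Delta=h^2+e^2+f^2$ identifies $C^\infty(U)$ with $\bigcap_k \mathfrak{D}(\bar{A}^k)$, settling both directions. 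Since the paper itself treats this theorem purely as a citation, your two black boxes (the domination estimates and the analytic-vector integrability theorem) are exactly the content of \cite{Ne59}, and your sketch is a faithful, correct account of that proof.
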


Remark that we have $A=\rho(\Omega)-\frac{1}{2}\left(\rho(E)-\rho(F)\right)^2.$ A $\germ g $-module giving rise to a representation $\rho$ of $\germ su (1,1)$ satisfying the assumptions of Nelson's theorem will be refer to as a \emph{unitarisable} module.

Thanks to this theorem, to find which $N(a)$ are unitarisable we need to construct on $N(a)$ a Hilbert space structure such that $h$, $e$, and $f$ acts by skew-symmetric operators. It is then equivalent to construct on $N(a)$ a Hilbert space structure such that $H^*=H$, $E^*=-F$, and $F^*=-E$. Let $\langle\cdot ,\cdot\rangle$ be an inner product on $N(a)$. By construction, $H$ acts on $N(a)$ by a semisimple operator. So, for $H$ to be self-adjoint it is necessary that weight vectors for different weight are orthogonal and that all the weights are real numbers. This means that the basis $\{x(k)\}_{k\in\ \cal K _0}$ is an orthogonal basis and that $a_1-a_2 \in\ \R$.

Besides we must also have 
$$\langle F\cdot x(k+1),x(k)\rangle = - \langle x(k+1),E\cdot x(k)\rangle, \ \forall\ k \in\ \cal K _0.$$
We shall work with this condition in the different cases $(I)$, $(II)$, $(III)$, and $(IV)$.

\subsection{Case $(I)$}

In this case, the condition becomes:
 $$(a_1+k+1)\|x(k)\|^2=-(\bar a_2 -k)\|x(k+1)\|^2.$$
Let us consider several possibilities:
\begin{enumerate}[(i)]
 \item Assume that both $a_1$ and $a_2$ are not integers. In this case, $\cal K _0=\Z$ and we have $a_1+k+1\not=0$ and $\bar a_2 -k\not=0$. So, for the condition to hold it is necessary and sufficient that 
 $$\forall\ k \in\ \Z,\ \frac{k-\bar a_2 }{k+a_1+1} \in\ \R_{>0}.$$ But we have seen that $a_1-a_2 \in\ \R$, so we can set $a_1=a_2+r$ for some $r \in\ \R$. Therefore we must have either $\Im(a_2)=0$ or $2\Re(a_2)+r+1=0$. In the first situation we must also have 
 $$\forall\ k \in\ \Z,\ \frac{k-a_2 }{k+a_2+r+1} > 0.$$ This is true if and only if 
 $$-2-[a_2]<a_2+r<-1-[a_2],$$ where $[a_2]$ is the integer such that $[a_2]\leq a_2<[a_2]+1$. Then we can express $\|x(k)\|^2$ uniquely in terms of $\|x(0)\|^2$, via the formula:
 \begin{subequations}\label{norm:cpl}
 
 \begin{align}
  \|x(k)\|^2 = & \frac{\displaystyle\prod_{j=1}^k\ (j+a_1)}{\displaystyle\prod_{j=1}^k\ (j-1-a_2)}\|x(0)\|^2,\ \mbox{if } k>0,\\
  \|x(k)\|^2 = & \frac{\displaystyle\prod_{j=1}^{-k}\ (j+a_2)}{\displaystyle\prod_{j=1}^{-k}\ (j-1-a_1)}\|x(0)\|^2,\ \mbox{if } k<0
 \end{align}

 \end{subequations}

 Conversely, if we define an inner product on $N(a)$ such that $\{x(k)\}$ is an orthogonal basis satisfying formulae \eqref{norm:cpl}, then Nelson theorem applies and thus the corresponding module is unitarisable.
 
 In the second situation, we have 
  $$\forall\ k \in\ \Z,\ \frac{k-\bar a_2 }{k+a_1+1}=1 \in\ \R_{>0}.$$ Then we can express $\|x(k)\|^2$ uniquely in terms of $\|x(0)\|^2$, via the formula:
  \begin{align}\label{norm:pcpl}
   \|x(k)\|^2=\|x(0)\|^2,\ & \forall\ k \in \Z
  \end{align}

  Conversely, if we define an inner product on $N(a)$ such that $\{x(k)\}$ is an orthogonal basis satisfying formula \eqref{norm:pcpl}, then Nelson theorem applies and thus the corresponding module is unitarisable.
 \item Assume that $a_1$ is not an integer but $a_2$ is a non negative integer. In this case, an integer $k$ belongs to $\cal K _0$ if and only if $k\leq a_2$. Moreover, since $a_1-a_2 \in\ \R$, we must have $a_1\in\ \R$. Then the condition becomes 
 $$\forall\ k <a_2,\ \frac{k-a_2 }{k+a_1+1} \in\ \R_{>0}.$$ Therefore, we must have $k+1+a_1<0$ for all $k<a_2$. This is true if and only if $a_1<-a_2$. Then we can express $\|x(k)\|^2$ uniquely in terms of $\|x(a_2)\|^2$, via the formula:
  \begin{align}\label{norm:HW}
   \|x(a_2-k)\|^2=\frac{k!}{\displaystyle\prod_{j=1}^k\ (j-1-a_1-a_2)}\|x(a_2)\|^2,\ & \forall\ k>0.
  \end{align}

  Conversely, if we define an inner product on $N(a)$ such that $\{x(k)\}$ is an orthogonal basis satisfying formula \eqref{norm:HW}, then Nelson theorem applies and thus the corresponding module is unitarisable.
  \item Assume that $a_2$ is not an integer but $a_1$ is a non negative integer. In this case, an integer $k$ belongs to $\cal K _0$ if and only if $k\geq -a_1$. Moreover, since $a_1-a_2 \in\ \R$, we must have $a_2\in\ \R$. Then the condition becomes 
 $$\forall\ k \geq -a_1,\ \frac{k-a_2 }{k+a_1+1} \in\ \R_{>0}.$$ Therefore, we must have $k-a_2>0$ for all $k\geq -a_1$. This is true if and only if $a_2<-a_1$. Then we can express $\|x(k)\|^2$ uniquely in terms of $\|x(-a_1)\|^2$, via the formula:
   \begin{align}\label{norm:LW}
   \|x(k-a_1)\|^2=\frac{k!}{\displaystyle\prod_{j=1}^k\ (j-1-a_1-a_2)}\|x(-a_1)\|^2,\ & \forall\ k>0.
  \end{align}
  
  Conversely, if we define an inner product on $N(a)$ such that $\{x(k)\}$ is an orthogonal basis satisfying formula \eqref{norm:LW}, then Nelson theorem applies and thus the corresponding module is unitarisable.
  \item Assume that both $a_1$ and $a_2$ are non negative integers. In this case, an integer $k$ belongs to $\cal K _0$ if and only if $-a_1\leq k \leq a_2$. Let $-a_1\leq k <a_2$ then the condition becomes 
 $$\forall\ k \in\ \Z,\ \frac{k-a_2 }{k+a_1+1} \in\ \R_{>0}.$$ This is not true, unless $a_1=a_2=0$. This choices correspond to the trivial (one-dimensional) module, which is of course unitarisable. In this case, we recovered the fact that a finite dimensional representation of a non-compact group cannot be unitary unless it is trivial.
\end{enumerate}

\subsection{Case $(II)$}

In this case, the condition becomes:
$$(a_{1}+k+1)(a_2-k)\|x(k)\|^2=-\|x(k+1)\|^2.$$ Therefore we must have $(a_{1}+k+1)(k-a_2)>0$. For an integer $k$ to belong to $\cal K _0$ it is necessary that $a_2-k<0$. Therefore we must have $a_1+k+1>0$. Let us distibguish two situations:
\begin{enumerate}[(i)]
 \item If $a_1 \not\in\ \Z$, then the condition $a_1+k+1>0$ for all $k>a_2$ is true if and only if $a_1+a_2+2>0$. In this case, we can express $\|x(k)\|^2$ via the formula:
 \begin{align}\label{norm:LW00}
  \|x(k+a_2+1)\|^2 = (k!)\displaystyle\prod_{j=1}^{k}\ (j+1+a_1+a_2)\|x(a_2+1)\|^2,\ & \forall\ k>0
 \end{align}
 
  Conversely, if we define an inner product on $N(a)$ such that $\{x(k)\}$ is an orthogonal basis satisfying formula \eqref{norm:LW00}, then Nelson theorem applies and thus the corresponding module is unitarisable.
\item If $a_1$ is a non negative integer, then an integer $k>a_2$ is in $\cal K _0$ if and only if $k+a_1\geq 0$. Hence in this case the condition is fulfilled. Then we can express $\|x(k)\|^2$ via the formula:
 \begin{subequations}\label{norm:LW0}
 
 \begin{align}
  \|x(k-a_1)\|^2 = (k!)\displaystyle\prod_{j=1}^k\ (j-1-a_1-a_2)\|x(-a_1)\|^2,\ & \forall\ k >0,\ \mbox{if } -a_1>a_2,\\
  \|x(k+a_2+1)\|^2 = (k!)\displaystyle\prod_{j=1}^{k}\ (j+1+a_1+a_2)\|x(a_2+1)\|^2,\ & \forall\ k>0,\ \mbox{if } -a_1\leq a_2.
 \end{align}
 
 \end{subequations}
 
  Conversely, if we define an inner product on $N(a)$ such that $\{x(k)\}$ is an orthogonal basis satisfying formula \eqref{norm:LW0}, then Nelson theorem applies and thus the corresponding module is unitarisable.
\end{enumerate}

\subsection{Case $(III)$}

This case is analoguous to the previous one. More specifically we have two situations:
\begin{enumerate}[(i)]
  \item If $a_2 \not\in\ \Z$, then we find the condition $a_1+a_2+2>0$. In this case, we can express $\|x(k)\|^2$ via the formula:
 \begin{align}\label{norm:LW100}
  \|x(-k-a_1-1)\|^2 = (k!)\displaystyle\prod_{j=1}^{k}\ (j+1+a_1+a_2)\|x(-a_1-1)\|^2,\ & \forall\ k>0
 \end{align}
 
  Conversely, if we define an inner product on $N(a)$ such that $\{x(k)\}$ is an orthogonal basis satisfying formula \eqref{norm:LW100}, then Nelson theorem applies and thus the corresponding module is unitarisable.
\item If $a_2$ is a non negative integer, then the unitarisability condition is fulfilled and we can express $\|x(k)\|^2$ via the formula:
 \begin{subequations}\label{norm:LW10}
 
 \begin{align}
  \|x(a_2-k)\|^2 = (k!)\displaystyle\prod_{j=1}^k\ (j-1-a_1-a_2)\|x(a_2)\|^2,\ & \forall\ k >0,\\
   & \mbox{if } -a_1>a_2,\notag \\
  \|x(-k-a_1-1)\|^2 = (k!)\displaystyle\prod_{j=1}^{k}\ (j+1+a_1+a_2)\|x(-a_1-1)\|^2,\ & \forall\ k>0,\\
   & \mbox{if } -a_1\leq a_2.\notag
 \end{align}
 
 \end{subequations}
 
  Conversely, if we define an inner product on $N(a)$ such that $\{x(k)\}$ is an orthogonal basis satisfying formula \eqref{norm:LW10}, then Nelson theorem applies and thus the corresponding module is unitarisable.
\end{enumerate}

\subsection{Case $(IV)$}

In this case, the condition becomes:
$$(a_2-k)\|x(k)\|^2=-(a_1+k+1)\|x(k+1)\|^2.$$
Furthermore, an integer $k$ belongs to $\cal K _0$ if and only if $k+a_1<0$ and $a_2-k<0$. Therefore the condition is never fulfilled. Of course, in this case the corresponding module $N(a)$ is finite dimensional ; so we know \emph{a priori} that it is not unitarisable.

\subsection{Statement}

Let us now state the final result:

\begin{theo}
 Let $a=(a_1,a_2)\in\ \C^2$. The module $N(a)$ is unitarisable if and only if $a$ is of one of the following form:
 \begin{enumerate}
 \item $a=(-1-x+iy,x+iy)$, with $x\in\ \R,\ y \in\ \R$.
 \item $a=(a_1,a_2)$, with $a_1$, $a_2$ non integer real numbers, and $-2-[a_2]<a_1<-1-[a_2]$.
 \item $a=(a_1,a_2)$, with $a_1\in\ \Z_{\geq 0}$ and $a_2 \in\ \R\setminus\Z$ such that $a_1+a_2+2<0$.
 \item $a=(a_1,a_2)$, with $a_2\in\ \Z_{\geq 0}$ and $a_1 \in\ \R\setminus\Z$ such that $a_1+a_2+2<0$.
 \item $a=(a_1,a_2)$, with $a_1\in\ \Z_{\geq 0}$ and $a_2\in\ \Z_{<0}$.
 \item $a=(a_1,a_2)$, with $a_2\in\ \Z_{\geq 0}$ and $a_1\in\ \Z_{<0}$.
 \item $a=(0,0)$.
\end{enumerate}

\end{theo}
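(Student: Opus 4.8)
Since every simple weight $\germ sl (2,\C)$-module is, by proposition~\ref{prop:WMsl2} and theorem~\ref{thm:BBLA}, isomorphic to some $N(a)$, and since by Nelson's theorem~\ref{thm:nelson} such a module is unitarisable precisely when its underlying space carries a Hilbert structure for which $H^{*}=H$, $E^{*}=-F$ and $F^{*}=-E$, the statement is obtained by collecting the case analysis carried out above. The first step is the reduction recalled at the beginning of this section: such a structure forces $a_{1}-a_{2}\in\R$, forces the standard basis $\{x(k)\}_{k\in\cal K _{0}}$ to be orthogonal, and then amounts to the single three-term relation $\langle F\cdot x(k+1),x(k)\rangle=-\langle x(k+1),E\cdot x(k)\rangle$ among the positive numbers $\|x(k)\|^{2}$. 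According to whether each of $a_{1},a_{2}$ is or is not a negative integer one is in one of the cases $(I)$--$(IV)$, in each of which the shape of $\cal K _{0}$ and of that relation has been recorded.

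The second step is to solve the relation case by case and read off which $a$ occur. Case $(IV)$ gives nothing: $N(a)$ is then finite dimensional and the relation has no positive solution. In case $(I)$, sub-case (i) (both $a_{1},a_{2}\notin\Z$), writing $a_{1}=a_{2}+r$ with $r\in\R$, the constraint $\frac{k-\bar a_{2}}{k+a_{1}+1}\in\R_{>0}$ for all $k\in\Z$ forces the dichotomy $2\Re(a_{2})+r+1=0$ — whose solution set, after eliminating $r$, is the locus of form $(1)$, with norms \eqref{norm:pcpl} — or $\Im(a_{2})=0$ together with $-2-[a_{2}]<a_{1}<-1-[a_{2}]$, which is form $(2)$, with norms \eqref{norm:cpl}. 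Sub-cases (ii) and (iii) of case $(I)$, where exactly one of $a_{1},a_{2}$ is a non-negative integer, yield the highest- and lowest-weight families of forms $(3)$ and $(4)$, with the sign conditions found there and norms \eqref{norm:HW} and \eqref{norm:LW}; sub-case (iv), both non-negative integers, is a finite-dimensional dead end, unitarisable only at $a=(0,0)$, which is form $(7)$. Cases $(II)$ and $(III)$, where one of $a_{1},a_{2}$ is a negative integer, are treated in the same way and supply, via \eqref{norm:LW00}, \eqref{norm:LW0}, \eqref{norm:LW100} and \eqref{norm:LW10}, the remaining lowest- and highest-weight families, among them the two cases $a_{1}\in\Z_{\ge0},\,a_{2}\in\Z_{<0}$ (form $(5)$) and $a_{2}\in\Z_{\ge0},\,a_{1}\in\Z_{<0}$ (form $(6)$) in which the relation holds automatically. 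In the converse direction one checks, exactly as in those subsections, that each displayed norm formula defines a positive-definite inner product for which Nelson's theorem applies; so all seven forms are indeed unitarisable.

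The step I expect to cost the most care is exhaustiveness without overcounting. A single isomorphism class of simple weight module can be realised by several parameters $a$ lying in different cases: the ``all multiplicities one'' module of form $(1)$ reappears, at integral $x$ with $y=0$, inside case $(II)$ or $(III)$, and a lowest-weight module with small lowest weight has realisations both in case $(I)$ (with $a_{1}\in\Z_{\ge0}$) and in case $(II)$ (with $a_{2}\in\Z_{<0}$). So the real work is to match the ad hoc parametrisations and inequalities of the four cases against the uniform list of seven forms and to check that this list, as a whole, exhausts the union of all solvable sub-cases; the overlaps among the seven forms are harmless, since the assertion is only that the unitarisable locus \emph{equals} that union. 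If one preferred not to quote the subsections, the analytic heart of the argument would instead be the verification of Nelson's hypotheses for each family: $A=\rho(h)^{2}+\rho(e)^{2}+\rho(f)^{2}=\rho(\Omega)-\tfrac12(\rho(E)-\rho(F))^{2}$ is, up to a scalar, the square of the symmetric Jacobi operator $\rho(E)-\rho(F)=i\rho(h)$, whose off-diagonal coefficients grow linearly in $k$, so its essential self-adjointness reduces to Carleman's criterion $\sum_{k}|b_{k}|^{-1}=\infty$, after which the identity $\germ D =\bigcap_{k}\germ D (\bar A^{k})$ is read off from the norm formulae.
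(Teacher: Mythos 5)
Your proposal takes essentially the same route as the paper: there the theorem is stated as the summary of the preceding reduction via Nelson's theorem (an inner product with $H^*=H$, $E^*=-F$, $F^*=-E$, forcing orthogonality of the standard basis and $a_1-a_2\in\R$) followed by the case-by-case solution of the three-term norm relation in cases $(I)$--$(IV)$, which is exactly the plan you describe, including the collection of the solvable sub-cases into the seven families and the converse via the explicit norm formulae. Your closing observation that the essential self-adjointness of $A$ can be checked directly through the tridiagonal operator $\rho(E)-\rho(F)$ with linearly growing coefficients (Carleman's criterion) is a sensible way of making explicit a verification that the paper leaves implicit, but it is not a different proof strategy.
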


In this classification, there are a lot of repetitions. For instance if $a_1$ and $a_2$ are not integers we have $N(a_1,a_2)=N(a_1-k,a_2+k)$, for any integer $k$. Up to isomorphism, this list reduces to the following:

\begin{enumerate}[(i)]
 \item $N(-1-x+iy,x+iy)$, $-1\leq x< 0$, $y\in\ \R_{>0}$ (Principal Series).
 \item $N(a_1,a_2)$, $-1<a_1,a_2<0$ (Complementary Series).
 \item $N(a_1,0)$, $a_1<0$ or $N(0,a_2)$, $a_2<0$ (Discrete Series and Continuations).
 \item $N(0,0)$ (Trivial Representation).
\end{enumerate}
In the sequel we denote the same way a unitarisable module and the corresponding unitary representation of the universal covering of $SU(1,1)$.

\begin{rema}
 The first proof of the classification of the unitary dual of the universal covering of $SU(1,1)$ is due to Pukanszky \cite{Pu64}. See also \cite{Sa66}. Another proof in the same spirit than our can be found in \cite{JM84}. There, J\o rgensen and Moore proved a stronger result: any simple weight module is the differential of a continuous representation of the universal covering of $SU(1,1)$ in some Hilbert space. 
\end{rema}
 
To conclude this section we collect the support and the infinitesimal character of the unitarisable modules in table \ref{tab:supp-char}.

\begin{table}[htbp]
 
\begin{center}
 
\begin{tabular}{|c|c|c|}
 \hline
  & & \\
 {\bf Modules} & {\bf Support} & {\bf Infinitesimal Character}\\
  & & \\
 \hline
 & & \\
$\pi_{-x,iy}=N(-1-x+iy,x+iy)$ & $-1-2x+2\Z$ & $-\frac{1}{4}-y^2$\\
{\Small ({\it Principal Series})} & & \\
 \hline
  & & \\
$\pi^c_{-a_{1},-a_{2}}=N(a_{1},a_{2})$ & $a_1-a_2+2\Z$ & $\left(\frac{a_1+a_2}{2}\right)\left(1+\frac{a_1+a_2}{2}\right)$\\
{\Small ({\it Complementary Series})} & & \\
 \hline
  & & \\
$\pi^+_{-a_{1}}=N(a_{1},0)$ & $a_1-2\Z_{\leq 0}$ & $\frac{a_1}{2}\left(1+\frac{a_1}{2}\right)$\\
{\Small ({\it Highest Weight})} & & \\
 \hline
  & & \\
$\pi^-_{-a_{2}}=N(0,a_{2})$ & $-a_2+2\Z_{\geq 0}$ & $\frac{a_2}{2}\left(1+\frac{a_2}{2}\right)$\\
{\Small ({\it Lowest Weight})} & & \\
 \hline 
\end{tabular}
\caption{}\label{tab:supp-char}
\end{center}

\end{table}


\section{Tensor Products : algebraic approach}


In this section we will investigate the algebraic structure of tensor products of $\germ sl (2,\C)$-modules. More precisely, we will be interested in tensor products of one of the following form :
\begin{enumerate}[(i)]
\item $N(0,b)\otimes N(a,0)$, with $a,\ b\in\ \R_{<0}$,
\item $N(-1-x+iy,x+iy)\otimes N(a,0)$, with $-1\leq x< 0$, $y\in\ \R_{>0}$, and  $a\in\ \R_{<0}$,
\item $N(a_{1},a_{2})\otimes N(a,0)$, with $-1< a_{1},a_{2}<0$, $a\in\ \R_{<0}$.
\end{enumerate}
In all cases, we denote by $V$ the tensor product. We give a basis of $V$ as follows. Let $x(k)$ be the standard basis of $N(0,b)$ (resp. $N(-1-x+iy,x+iy)$, $N(a_1,a_2)$), where $k$ belongs to $\Z_{\geq 0}$ (resp. $\Z$). Let $y(l)$ be the basis of $N(a,0)$ defined by $y(l)=x(-l)$, where $x(j)$ is the standard basis of $N(a,0)$ and $l$ belongs to $\Z_{\geq 0}$. Set $z(k,l)=x(k)\otimes y(l)$. This is a basis of $V$. Using formulae $(I)$, $(II)$, $(III)$, and $(IV)$ of section \ref{ssec:sl2}, we check that 
\begin{enumerate}[(i)]
 \item $H\cdot z(k,l)=\left(-b+a+2(k-l)\right) z(k,l).$
 \item $H\cdot z(k,l)=\left(-1-2x+a+2(k-l)\right) z(k,l).$
 \item $H\cdot z(k,l)=\left(a_{1}-a_{2}+a+2(k-l)\right) z(k,l).$
\end{enumerate}
We deduce then that $V$ is the direct sum of its weight spaces and that all its non zero weight spaces are infinite dimensional. Moreover we have $supp(V)=b_1-b_2+a+2\Z$, where $(b_1,b_2)=(0,b)$ (resp. $(-1-x+iy,x+iy)$, $(a_1,a_2)$). From \cite{Fe90}, we know that every submodule (resp. quotient) of $V$ is also the direct sum of its weight spaces. More specifically, if $W$ is a submodule of $V$, then for any $\lambda \in\ \mathfrak{h}^*$ we have $W_{\lambda}=V_{\lambda}\cap W$ and $\left(V/W\right)_{\lambda}=V_{\lambda}/\left(V_{\lambda}\cap W\right)$.

Let $\cal U _0$ denote the commutant of $\germ h $ in $\cal U (\germ g )$. Then, as an algebra, $\cal U _0$ is generated by $H$ and $FE$. In other words, a basis of $\cal U _0$ is given by the vectors $(FE)^tH^s$ for $t,\ s \in\ \Z_{\geq 0}$. Now recall the following general result:

\begin{theo}[Lemire \cite{Le69}]\label{thm:Lemire}
 Let $\germ g $ be a simple finite dimensional complex Lie algebra. Let $\germ h $ be a Cartan subalgebra of $\germ g $. Denote by $\cal U _0$ the commutant of $\germ h $ in $\cal U (\germ g )$.
 \begin{enumerate}
  \item Let $M$ be a simple weight $\germ g $-module. Then for any $\lambda \in\ \germ h ^*$, $M_{\lambda}$ is either zero or a simple $\cal U _0$-module.
  \item Let $M_0$ be a simple $\cal U _0$-module. Then up to isomorphism, there is a unique simple weight module $M$ such that $M_0$ is a weight space of $M$.
 \end{enumerate}
\end{theo}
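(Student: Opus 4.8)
The plan is to exploit the $\mathfrak{h}$-grading of the enveloping algebra, $\mathcal{U}(\mathfrak{g})=\bigoplus_{\mu\in Q}\mathcal{U}(\mathfrak{g})_\mu$, where $Q$ is the root lattice and $\mathcal{U}(\mathfrak{g})_\mu$ is the $\mu$-eigenspace for the adjoint action of $\mathfrak{h}$, so that $\mathcal{U}(\mathfrak{g})_0=\mathcal{U}_0$. The single mechanism driving everything is that this grading is compatible with weight decompositions: $\mathcal{U}(\mathfrak{g})_\mu\cdot M_\lambda\subseteq M_{\lambda+\mu}$ for every weight module $M$ and every $\lambda\in\mathfrak{h}^*$.

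For part (1) I would argue as follows. Suppose $M$ is simple and $v\in M_\lambda$ is nonzero. By simplicity $M=\mathcal{U}(\mathfrak{g})\,v=\sum_\mu\mathcal{U}(\mathfrak{g})_\mu\,v$; since $\mathcal{U}(\mathfrak{g})_\mu\,v\subseteq M_{\lambda+\mu}$, these summands sit in pairwise distinct weight spaces, so the sum is direct and must coincide term by term with the weight decomposition of $M$. In particular $M_\lambda=\mathcal{U}(\mathfrak{g})_0\,v=\mathcal{U}_0\,v$. Thus every nonzero element of $M_\lambda$ generates $M_\lambda$ as a $\mathcal{U}_0$-module, which is exactly the statement that $M_\lambda$ is a simple $\mathcal{U}_0$-module.

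For part (2) I would first note that $\mathcal{U}(\mathfrak{h})$ is central in $\mathcal{U}_0$ (every element of $\mathcal{U}_0$ commutes with $\mathfrak{h}$, hence with $\mathcal{U}(\mathfrak{h})$). A simple $\mathcal{U}_0$-module $N$ is a quotient of $\mathcal{U}_0$, hence of countable dimension over $\C$, so Dixmier's form of Schur's lemma gives $\mathrm{End}_{\mathcal{U}_0}(N)=\C$; therefore $\mathcal{U}(\mathfrak{h})$ acts on $N$ by a character, yielding a $\lambda\in\mathfrak{h}^*$ with $H\cdot n=\lambda(H)n$. Next I would form the induced module $\widetilde M=\mathcal{U}(\mathfrak{g})\otimes_{\mathcal{U}_0}N$. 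Using the grading, $\widetilde M=\bigoplus_\mu\mathcal{U}(\mathfrak{g})_\mu\otimes_{\mathcal{U}_0}N$, and a one-line computation of the $\mathfrak{h}$-action on $u\otimes n$ shows $\mathcal{U}(\mathfrak{g})_\mu\otimes_{\mathcal{U}_0}N$ is precisely the $(\lambda+\mu)$-weight space; in particular the $\lambda$-weight space is $\mathcal{U}_0\otimes_{\mathcal{U}_0}N\cong N$, and it generates $\widetilde M$. (If one insists that the resulting module have finite-dimensional weight spaces, as in Definition \ref{def:WM}, one invokes that each $\mathcal{U}(\mathfrak{g})_\mu$ is finitely generated as a right $\mathcal{U}_0$-module — a consequence of the Noetherianity of $\mathcal{U}(\mathfrak{g})$ — together with $\dim N<\infty$.) Now let $J$ be the sum of all submodules of $\widetilde M$ whose $\lambda$-weight space vanishes; because the weight decomposition passes to submodules (the cited result of Fernando, or simply $\mathfrak{h}$-semisimplicity), $J$ itself has zero $\lambda$-weight space, so it is the unique largest such submodule and $J\cap N=0$. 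Put $M=\widetilde M/J$. Then $M_\lambda\cong N$ as $\mathcal{U}_0$-modules, and $M$ is simple: a nonzero submodule $W$ of $M$ must have $W_\lambda\neq 0$ (otherwise its preimage would properly contain $J$), so $W_\lambda$ is a nonzero $\mathcal{U}_0$-submodule of the simple module $N$, hence equals it, and then $W\supseteq\mathcal{U}(\mathfrak{g})W_\lambda=M$. For uniqueness, if $M'$ is any simple weight module with $M'_\lambda\cong N$, then the inclusion $N\cong M'_\lambda\hookrightarrow M'$ extends to a surjection $\widetilde M\twoheadrightarrow M'$ that is an isomorphism on $\lambda$-weight spaces; its kernel has zero $\lambda$-weight space, hence lies in $J$, and since $M'$ is simple the kernel must equal $J$, so $M'\cong\widetilde M/J=M$.

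I expect the main obstacle to be the existence half of part (2): identifying the correct quotient of the induced module $\widetilde M$ and, if genuine finite-dimensional weight spaces are required, controlling the size of $\mathcal{U}(\mathfrak{g})_\mu$ as a $\mathcal{U}_0$-module. Once the induced module and the submodule $J$ are in hand, the simplicity and uniqueness statements follow formally from the grading, and part (1) is essentially immediate.
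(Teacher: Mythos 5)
Your argument is correct, but there is nothing in the paper to compare it with: Theorem \ref{thm:Lemire} is quoted from Lemire \cite{Le69} and used without proof (and only through the $\mathfrak{sl}(2,\C)$ case, where $\mathcal{U}_0=\C[H,FE]$ is commutative and simple $\mathcal{U}_0$-modules are one-dimensional). Your route is the standard proof of this correspondence: the root-lattice grading $\mathcal{U}(\mathfrak{g})=\bigoplus_\mu\mathcal{U}(\mathfrak{g})_\mu$ gives part (1) at once, and for part (2) Dixmier's version of Schur's lemma produces the character $\lambda$ of $\mathfrak{h}\subseteq\mathcal{U}_0$, after which induction $\mathcal{U}(\mathfrak{g})\otimes_{\mathcal{U}_0}M_0$ followed by the quotient by the largest submodule with trivial $\lambda$-weight space yields existence, simplicity and uniqueness exactly as you say; this is essentially Lemire's own argument and the one underlying the Harish-Chandra-subalgebra picture of \cite{DFO94}. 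Two small points are worth writing out. First, your parenthetical Noetherianity claim is true but deserves its one-line proof: the assignment $V\mapsto V\,\mathcal{U}(\mathfrak{g})$ is an inclusion-preserving injection from right $\mathcal{U}_0$-submodules of $\mathcal{U}(\mathfrak{g})_\mu$ to right ideals of $\mathcal{U}(\mathfrak{g})$ (one recovers $V$ as the degree-$\mu$ component of $V\,\mathcal{U}(\mathfrak{g})$), so $\mathcal{U}(\mathfrak{g})_\mu$ is a Noetherian, hence finitely generated, right $\mathcal{U}_0$-module. Second, in the uniqueness step you should note that if $M_0$ is isomorphic to a weight space $M'_{\lambda'}$ of some simple weight module $M'$, then necessarily $\lambda'=\lambda$, because $\mathfrak{h}\subseteq\mathcal{U}_0$ acts on $M_0$ by the character $\lambda$; only then does your surjection $\widetilde M\twoheadrightarrow M'$ with kernel $J$ apply. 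Finally, as you observe, if $\dim M_0=\infty$ the module produced has an infinite-dimensional weight space and so falls outside Definition \ref{def:WM}; the statement is really meant for (and is only used in) the finite-dimensional case.
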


Finally recall form proposition \ref{prop:WMsl2} that a simple weight module for $\germ sl (2,\C)$ is of degree $1$. Thus theorem \ref{thm:Lemire} implies that to construct all simple submodules (resp. quotient) of $V$ it is sufficient to understand all simple $\cal U _0$-submodule (resp. quotient) of all weight spaces of $V$. We have seen above that weight spaces of $V$ are indexed by integers. Let $n_0\in\ \Z$. Denote by $V_{n_0}$ the weight space of weight $b_1-b_2+a+2n_0$, where $(b_1,b_2)=(0,b)$ (resp. $(-1-x+iy,x+iy)$, $(a_1,a_2)$). Then a basis of this weight space is given by all the vectors $z(k,l)$ such that $k-l=n_0$. Using formulae $(I)$, $(II)$, $(III)$, and $(IV)$ of section \ref{ssec:sl2}, we see that in general we have $(FE)\cdot z(k,l)=a(k,l)z(k-1,l-1)+b(k,l)z(k,l)+c(k,l)z(k+1,l+1)$ for some complex numbers $a(k,l),\ b(k,l),\ c(k,l)$. Moreover, we always have $c(k,l)\not=0$.

Let $l_0$ be the smallest $l$ such that there exists $k$ with $k-l=n_0$. The integer $l_0$ exists since $l\in\ \Z_{\geq 0}$. More precisely, in the cases $(ii)$ and $(iii)$ we always have $l_0=0$ since $k\in\ \Z$. In the case $(i)$, $l_0=0$ when $n_0\geq 0$ and $l_0=-n_0$ when $n_0<0$, since $k\in\ \Z_{\geq 0}$. The formulae show that we always have $a(l_0+n_0,l_0)=0$. Denote by $\germ c $ the one-dimensional Lie algebra $\C (FE)$. We prove:

\begin{prop}\label{prop:VNcyclic}
 With the notations as above, we have:
 \begin{enumerate}
  \item As a $\germ c $-module, $V_{n_0}$ is cyclic, generated by $z_{n_0}:=z(l_0+n_0,l_0)$.
  \item The map $\rho_{n_0}: \cal U (\germ c ) \rightarrow V_{n_0}$ defined by $\rho_{n_0}(u)=u\cdot z_{n_0}$ is a bijection.
 \end{enumerate}
\end{prop}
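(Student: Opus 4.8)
The plan is to reindex the weight space $V_{n_0}$ so that $FE$ acts by a tridiagonal matrix whose super-diagonal never vanishes, and then conclude by a triangularity argument. First I would set $w_j := z(l_0+n_0+j,\,l_0+j)$ for $j\in\Z_{\geq 0}$, so that $\{w_j\}_{j\geq 0}$ is a basis of $V_{n_0}$ with $w_0=z_{n_0}$, and, abbreviating $a_j,b_j,c_j$ for $a(l_0+n_0+j,l_0+j)$ and so on, the formula recalled just before the statement reads
$$FE\cdot w_j=a_j\,w_{j-1}+b_j\,w_j+c_j\,w_{j+1},\qquad j\geq 0,$$
with $c_j\neq 0$ for every $j$ and $a_0=0$ (the latter being automatic since $FE\in\mathcal{U}_0$ maps the weight space $V_{n_0}$ to itself, and also visible directly from $(I)$--$(IV)$). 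Note that the three possible tensor products $(i)$, $(ii)$, $(iii)$, and the sign of $n_0$, only affect the value of $l_0$ and the explicit constants $a_j,b_j,c_j$; the shape of the recursion is the same in all cases, so the argument runs uniformly.

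The key step is the claim that for every $m\geq 0$
$$(FE)^m\cdot w_0=\sum_{i=0}^m\gamma_i^{(m)}w_i\qquad\text{with}\qquad \gamma_m^{(m)}=\prod_{j=0}^{m-1}c_j\neq 0$$
(the product being empty, hence $1$, when $m=0$). I would prove this by a straightforward induction on $m$: applying $FE$ to the displayed expansion and using the three-term recursion, the coefficient of the top vector $w_{m+1}$ is $\gamma_m^{(m)}c_m=\prod_{j=0}^m c_j\neq 0$, and no $w_i$ with $i>m+1$ can occur.

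Granting the claim, the vectors $w_0,\ FE\cdot w_0,\ \ldots,\ (FE)^M\cdot w_0$ are obtained from $w_0,w_1,\ldots,w_M$ by a lower-triangular change of basis with nonzero diagonal entries $\prod_{j<m}c_j$, hence form a basis of $\mathrm{span}(w_0,\ldots,w_M)$ for every $M$; letting $M\to\infty$ shows that $\{(FE)^m\cdot w_0\}_{m\geq 0}$ is a basis of $V_{n_0}$. This gives $V_{n_0}=\mathcal{U}(\mathfrak{c})\cdot z_{n_0}$, which is part (1). Since $\{(FE)^m\}_{m\geq 0}$ is a basis of $\mathcal{U}(\mathfrak{c})=\C[FE]$ and $\rho_{n_0}$ carries it bijectively onto the basis $\{(FE)^m\cdot w_0\}_{m\geq 0}$ of $V_{n_0}$, the linear map $\rho_{n_0}$ is an isomorphism, in particular a bijection, which is part (2).

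There is no genuine obstacle here: everything rests on the two facts $c(k,l)\neq 0$ and $a(l_0+n_0,l_0)=0$ already extracted from the explicit actions $(I)$--$(IV)$, together with the elementary invertibility of a triangular matrix with nonzero diagonal. The one point demanding a little care is the bookkeeping for $l_0$ (and the choice of which of $(I)$--$(IV)$ computes the $c_j$ in each tensor product), but as noted this does not affect the structure of the proof.
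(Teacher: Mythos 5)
Your proof is correct and rests on exactly the same ingredients as the paper's: the three-term recursion for $FE$ on the basis $z_{n_0}(j)$, the nonvanishing of the coefficients $c_j$, and the vanishing $a_0=0$ recalled just before the statement. The only difference is packaging — you derive both cyclicity and injectivity at once from a single triangularity claim about $(FE)^m\cdot z_{n_0}$, whereas the paper proves surjectivity by an upward induction on $j$ and then injectivity by the same leading-coefficient computation you use — so this is essentially the paper's argument.
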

\begin{proof}
Denote by $z_{n_0}(j):=z(l_0+n_0+j,l_0+j)$ for $j\in\ \Z_{\geq 0}$. Then $V_{n_0}$ has a basis given by the vectors $z_{n_0}(j)$ for $j\in\ \Z_{\geq 0}$.
 \begin{enumerate}
  \item We have $(FE)\cdot z_{n_0}=b(l_0+n_0,l_0)z_N+c(l_0+n_0,l_0)z_{n_0}(1)$, with $c(l_0+n_0,l_0)\not=0$. Thus $\frac{(FE)-b(l_0+n_0,l_0)1}{c(l_0+n_0,l_0)}\cdot z_{n_0}=z_{n_0}(1)$. Therefore $z_{n_0}(1)\in\ \cal U (\germ c )\cdot z_{n_0}$. We then prove that $z_{n_0}(j)\in\ \cal U (\germ c )\cdot z_{n_0}$ by induction on $j$, using the relation 
 $(FE)\cdot z_{n_0}(j)=a(l_0+n_0+j,l_0+j)z_{n_0}(j-1)+b(l_0+n_0+j,l_0+j)z_{n_0}(j)+c(l_0+n_0+j,l_0+j)z_{n_0}(j+1)$. This completes the first part of the proposition.
  \item The map $\rho_{n_0}$ is surjective by the first part. We prove that it is also injective. Let $u=\displaystyle\sum_{m=0}^M\ c_m(FE)^m \in\ \cal U (\germ c )$ such that $c_M\not=0$. Then using the relation $(FE)\cdot z_{n_0}(j)=a(l_0+n_0+j,l_0+j)z_{n_0}(j-1)+b(l_0+n_0+j,l_0+j)z_{n_0}(j)+c(l_0+n_0+j,l_0+j)z_{n_0}(j+1)$, we check that $u\cdot z_{n_0}=c_M\times c(l_0+n_0+M-1,l_0+M-1)z_{n_0}(M) + \displaystyle\sum_{m=0}^{M-1}\ d_mz_{n_0}(m)$ for some complex numbers $d_m$. As the vectors $z_{n_0}(j)$ are independant vectors of $V_{n_0}$, we conclude that $u\cdot z_{n_0}\not=0$. Hence $\rho_{n_0}$ is injective.
 \end{enumerate} 
\end{proof}

A consequence of proposition \ref{prop:VNcyclic} is the following:

\begin{coro}
 As a $\cal U (\germ c )$-module, $V_{n_0}$ is isomorphic to $\C[X]$, the space of polynomials in one indeterminate.
\end{coro}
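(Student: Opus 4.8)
The plan is to read the Corollary off directly from Proposition~\ref{prop:VNcyclic}, after recording two elementary facts about the enveloping algebra of the one-dimensional abelian Lie algebra $\germ c =\C(FE)$. First I would fix an isomorphism of associative algebras $\cal U (\germ c ) \to \C[X]$ sending the generator $FE$ to the indeterminate $X$; this is legitimate precisely because $\germ c $ is abelian of dimension one, so $\cal U (\germ c )$ is the free commutative algebra on the single generator $FE$. Under this identification, $\C[X]$ viewed as a module over itself by multiplication corresponds to $\cal U (\germ c )$ viewed as a left module over itself.

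Next I would observe that the surjection $\rho_{n_0}\colon \cal U (\germ c )\to V_{n_0}$, $u\mapsto u\cdot z_{n_0}$, constructed in Proposition~\ref{prop:VNcyclic} is a homomorphism of left $\cal U (\germ c )$-modules: for $u,v\in\cal U (\germ c )$ one has $\rho_{n_0}(uv)=(uv)\cdot z_{n_0}=u\cdot(v\cdot z_{n_0})=u\cdot\rho_{n_0}(v)$, which is nothing but associativity of the action. The second assertion of Proposition~\ref{prop:VNcyclic} says that $\rho_{n_0}$ is in addition bijective, hence an isomorphism of $\cal U (\germ c )$-modules. Composing with the inverse of the identification from the first step yields $V_{n_0}\cong\C[X]$ as $\cal U (\germ c )$-modules, which is the claim.

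I expect no genuine obstacle: all the content sits in Proposition~\ref{prop:VNcyclic}, and the one new verification --- that $\rho_{n_0}$ intertwines the module structures --- is purely formal. If a more hands-on description is wanted, one can instead note that the vectors $(FE)^j\cdot z_{n_0}$ for $j\in\Z_{\geq 0}$ form a basis of $V_{n_0}$, since by the three-term recursion for $FE$ they are a triangular combination of the $z_{n_0}(j)$ appearing in the proof of Proposition~\ref{prop:VNcyclic} with nonzero leading coefficient; then $X^j\mapsto (FE)^j\cdot z_{n_0}$ is manifestly a $\cal U (\germ c )$-linear bijection $\C[X]\to V_{n_0}$. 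This route, however, merely re-establishes Proposition~\ref{prop:VNcyclic} by hand, so the short argument above is preferable.
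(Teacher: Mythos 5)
Your argument is correct and is essentially the paper's own: the corollary is stated there as an immediate consequence of Proposition \ref{prop:VNcyclic}, i.e.\ one identifies $\cal U (\germ c )\cong\C[X]$ via $FE\mapsto X$ and uses the bijectivity of the $\cal U (\germ c )$-linear map $\rho_{n_0}$ to transport the regular module structure onto $V_{n_0}$. Nothing further is needed.
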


Now remark that $H$ acts on $V_{n_0}$ as a scalar. Therefore $W$ is a simple $\cal U _0$-submodule (resp. quotient) of $V_{n_0}$ if and only if it is a simple $\cal U (\germ c )$-submodule (resp. quotient) of $V_{n_0}$. As $V_{n_0}=\C[X]$ as a $\cal U (\germ c )$-module, we conclude that it does not have any simple submodule and that simple quotient of $V_{n_0}$ are of the form $V_{n_0}/(FE-\chi)$ for some complex number $\chi$. Such a quotient is one-dimensional (as expected), generated by a vector $z$ satisfying $H\cdot z=(b_1-b_2+a+2n_0)z$ and $FE\cdot z=\chi z$.

Thanks to theorem \ref{thm:Lemire}, we conclude that $V$ does not have any simple submodules and that $W$ is a simple quotient of $V$ if and only if $W$ has a one-dimensional weight space generated by a vector $z$ satisfying $H\cdot z=(b_1-b_2+a+2n_0)z$ and $FE\cdot z=\chi z$ for some integer $n_0$ and some complex number $\chi$. Note that if $W$ is a simple weight $\germ sl (2,\C)$-module such that $supp(W)\subset supp(V)$ then there is an integer $n_0$ such that $b_1-b_2+a+2n_0$ is a weight of $W$. The corresponding weight space is one-dimensional as asserted by the proposition \ref{prop:WMsl2}. Let $z$ be any vector of $W$ of weight $b_1-b_2+a+2n_0$. Then the Casimir operator $\Omega$ acts on $W$ as a scalar $\chi'$ and therefore $FE$ acts on $z$ by a scalar $\chi$. As a conclusion we have proved the following:

\begin{theo}\label{thm:tens-alg}
 Every simple weight $\germ sl (2,\C)$-module $W$ such that $supp(W)\subset supp(V)$ appears as a quotient of $V$.
\end{theo}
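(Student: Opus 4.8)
The plan is to combine the structural results established just above---Lemire's theorem, the degree-one property of simple $\germ sl(2,\C)$-modules, and Proposition~\ref{prop:VNcyclic}---to reduce everything to a computation inside a single weight space, where the module $V_{n_0}$ is identified with $\C[X]$ as a $\cal U(\germ c)$-module. First I would observe that since $W$ is a simple weight $\germ sl(2,\C)$-module with $supp(W)\subset supp(V)$, and $supp(V)=b_1-b_2+a+2\Z$, there is an integer $n_0$ with $b_1-b_2+a+2n_0\in supp(W)$; by Proposition~\ref{prop:WMsl2} the corresponding weight space $W_{n_0}$ is one-dimensional. Pick a generator $z$ of $W_{n_0}$. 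Since $W$ is simple, the Casimir $\Omega$ acts by a scalar $\chi'$, and because $H$ acts on $W_{n_0}$ by the known scalar, the element $FE\in\cal U_0$ acts on $z$ by a scalar $\chi$ (determined by $\chi'$ and the weight via $\Omega=\tfrac14 H^2+\tfrac12 H+FE$).

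Next I would produce the quotient map explicitly. By Proposition~\ref{prop:VNcyclic}(2), the map $\rho_{n_0}:\cal U(\germ c)\to V_{n_0}$, $u\mapsto u\cdot z_{n_0}$, is a bijection, so $V_{n_0}\cong\C[X]$ as a $\cal U(\germ c)$-module with $FE$ acting as multiplication by $X$ (after the corollary's identification). Therefore the ideal $(FE-\chi)\subset\cal U(\germ c)$ gives a codimension-one $\cal U(\germ c)$-submodule of $V_{n_0}$, and the quotient $V_{n_0}/(FE-\chi)\cdot V_{n_0}$ is one-dimensional, spanned by the image $\bar z_{n_0}$ of $z_{n_0}$, on which $FE$ acts by $\chi$ and $H$ by $b_1-b_2+a+2n_0$. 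Since $H$ acts as a scalar on all of $V_{n_0}$, this is simultaneously a simple $\cal U_0$-quotient. Now invoke Lemire's theorem~\ref{thm:Lemire}(2): up to isomorphism there is a unique simple weight $\germ sl(2,\C)$-module having this one-dimensional $\cal U_0$-module as a weight space; by the computation of the previous paragraph, $W$ is exactly that module (both have a weight vector at weight $b_1-b_2+a+2n_0$ annihilated by $FE-\chi$, and such data determines the simple module uniquely).

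Finally I would pass from the weight-space statement to a genuine quotient of $V$. The $\cal U_0$-submodule $(FE-\chi)\cdot V_{n_0}$ of $V_{n_0}$ generates, under $\cal U(\germ g)$, a submodule $\cal N\subset V$; by the result of \cite{Fe90} cited above, $\cal N$ is the direct sum of its weight spaces $\cal N_\lambda=V_\lambda\cap\cal N$, and one checks $\cal N_{n_0}=(FE-\chi)\cdot V_{n_0}$ (it cannot be larger, else $V_{n_0}/\cal N_{n_0}$ would vanish and $\cal N$ would meet $V_{n_0}$ in everything, contradicting that $z_{n_0}\notin\cal N$ since $FE-\chi$ acts injectively-mod-lower-terms by Proposition~\ref{prop:VNcyclic}). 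Then $V/\cal N$ is a weight module whose weight space at $n_0$ is one-dimensional with the prescribed $FE$-eigenvalue; a simple quotient of $V/\cal N$ then has this same one-dimensional weight space, hence by the uniqueness in Lemire's theorem is isomorphic to $W$, exhibiting $W$ as a quotient of $V$. The main obstacle is the bookkeeping in this last step---verifying that the submodule generated by the codimension-one piece of one weight space does not accidentally swallow the generator $z_{n_0}$ and collapse the relevant weight space---but Proposition~\ref{prop:VNcyclic} together with Feldvoss's weight-space decomposition of submodules makes this routine rather than delicate.
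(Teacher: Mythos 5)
Your reduction to a single weight space is exactly the paper's: you locate $n_0$, use Proposition~\ref{prop:WMsl2} and the Casimir to produce the scalar $\chi$, identify $V_{n_0}\cong\C[X]$ via Proposition~\ref{prop:VNcyclic}, and match the one-dimensional $\mathcal{U}_0$-module $V_{n_0}/(FE-\chi)V_{n_0}$ with $W_{n_0}$ by the uniqueness in Theorem~\ref{thm:Lemire}. The genuine gap is in your last paragraph, where you pass from this weight-space statement to a surjection of $V$ onto $W$. Two steps are unjustified. First, the existence of a simple quotient of $V/\mathcal{N}$ is not automatic: $V/\mathcal{N}$ is not known to be cyclic or finitely generated (all its weight spaces other than the $n_0$-th are infinite dimensional), so Zorn's lemma does not hand you a maximal submodule. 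Second, and decisively, even if a simple quotient $V/\mathcal{M}$ exists, its $n_0$-weight space is a quotient of the one-dimensional space $(V/\mathcal{N})_{n_0}$ and may very well be zero; nothing in your argument prevents $\mathcal{M}$ from swallowing $z_{n_0}$, so the assertion that ``a simple quotient of $V/\mathcal{N}$ has this same one-dimensional weight space'' does not follow. The natural repair --- take $\mathcal{M}$ maximal among submodules containing $\mathcal{N}$ and not containing $z_{n_0}$ --- only shows that $V/\mathcal{M}$ has a unique minimal submodule, isomorphic to $W$; that exhibits $W$ as a subquotient of $V$, which is strictly weaker than the theorem's claim. (Your identification $\mathcal{N}\cap V_{n_0}=(FE-\chi)V_{n_0}$ is correct, but the clean reason is that the weight-$n_0$ component of $\mathcal{U}(\mathfrak{g})\cdot(FE-\chi)V_{n_0}$ is $\mathcal{U}_0\cdot(FE-\chi)V_{n_0}=(FE-\chi)V_{n_0}$, and $z_{n_0}$ corresponds to $1\notin(X-\chi)\C[X]$ under the free-module identification; also \cite{Fe90} is Fernando, not Feldvoss.)

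The missing lifting step is precisely the point the paper delegates to Lemire's correspondence: Theorem~\ref{thm:Lemire} is invoked to assert that simple quotients of $V$ correspond to simple $\mathcal{U}_0$-quotients of its weight spaces, so the implication ``simple $\mathcal{U}_0$-quotient of $V_{n_0}$ implies simple quotient of $V$'' is used as a black box rather than reproved. If you want your argument to be self-contained you must actually produce a nonzero element of $\mathrm{Hom}_{\mathfrak{g}}(V,W)$; for instance, via the adjunction $\mathrm{Hom}_{\mathfrak{g}}(M_1\otimes M_2,W)\cong\mathrm{Hom}_{\mathfrak{g}}(M_1,\mathrm{Hom}_{\C}(M_2,W))$ one can construct a vector of the correct weight in $\mathrm{Hom}_{\C}(M_2,W)$ annihilated by the relevant root vector by solving a first-order recursion whose coefficients never vanish, and any nonzero $\mathfrak{g}$-map into the simple module $W$ is automatically surjective. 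Either cite and use the quotient-lifting statement in the form the paper does, or supply such a construction; as written, your final paragraph does not prove that $W$ is a quotient (rather than a subquotient) of $V$.
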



\section{Tensor products : Hilbertian approach}


In this section, we will investigate the structure of tensor products of unitarisable $\germ su (1,1)$-modules. In what is to follow, we will set $V=N(a_{1},a_{2})\otimes N(a,0)$ where $a\in\ \R_{<0}$, and either $a_{1}=0,\ a_{2}\in\ \R_{<0}$ or $a_{1}=-1-x+iy,\ a_{2}=x+iy$ (with $-1\leq x< 0$, $y\in\ \R_{>0}$) or $-1<a_{1},a_{2}<0$. Set $s=a+a_1+a_2$. If $a_{1}=-1-x+iy,\ a_{2}=x+iy$, then $s=-1+a+2iy$, and so $\Re(s)<-1$. Otherwise, $s\in\ \R_{<0}$. We will consider several different cases:
\begin{enumerate}[(A)]
\item $a\not\in\ \Z$ and either $a_{1}=0,\ a_{2}\in\ \R_{<0}\setminus \Z_{<0}$ or $a_{1}=-1-x+iy,\ a_{2}=x+iy$ (with $-1\leq x< 0$, $y\in\ \R_{>0}$) or $-1<a_{1},a_{2}<0$.
\item $a\in\ \Z_{<0}$ and either $a_{1}=0,\ a_{2}\in\ \R_{<0}\setminus \Z_{<0}$ or $a_{1}=-1-x+iy,\ a_{2}=x+iy$ (with $-1\leq x< 0$, $y\in\ \R_{>0}$) or $-1<a_{1},a_{2}<0$.
\item $a\not\in\ \Z_{<0}$ and $a_{1}=0,\ a_{2}\in \Z_{<0}$.
\item $a\in\ \Z_{<0}$ and $a_{1}=0,\ a_{2}\in \Z_{<0}$.
\end{enumerate}

Denote by $x(k)$ the standard basis of $N(a_{1},a_{2})$ given in section \ref{sssec:Weyl}. In particular, $k\in\ \Z_{\geq 0}$ if $a_{1}=0$ and $k\in\ \Z$ otherwise. For $l\in\Z_{\geq 0}$, denote by $y(l)$ the basis of $N(a,0)$ defined by $y(l)=x(-l)$ where $x(j)$ is the standard basis of $N(a,0)$. Now a basis for $V$ is $z(k,l)=x(k)\otimes y(l)$.

Moreover the modules $N(a_{1},a_{2})$ and $N(a,0)$ have a Hilbert space structure, given by formulae \eqref{norm:cpl}, \eqref{norm:pcpl}, \eqref{norm:HW}, or \eqref{norm:LW}. Therefore we can construct a Hilbert space structure on $V$ via the formula $\langle x\otimes y,x'\otimes y'\rangle=\langle x,x'\rangle\times \langle y,y'\rangle$. Thus the completion of $V$ with respect to this Hilbert structure is 
$$\hat{V}=\left\{\displaystyle\sum_{k,l}\ u_{k,l}z(k,l)\ : \ \displaystyle\sum_{k,l}\  |u_{k,l}|^2\| z(k,l)\|^2< \infty\right\}.$$
In the sequel we shall write $V$ instead of $\hat{V}$. For future use we recall in table  \ref{tab:asymptotics} the value of the norms $\|x(k)\|^2$ and $\|y(l)\|^2$ in the various situations.

\begin{table}[htbp]
\begin{center}
\begin{tabular}{|c|c|}
\hline
 & \\
$a \not\in\ \Z_{<0}$ & $\|y(l)\|^2=\frac{l!}{\displaystyle\prod_{j=1}^l\ (j-a-1)}\|y(0)\|^2,\ l>0$ \\
 & \\
\hline
 & \\
$a \in\ \Z_{<0}$ & $\|y(l)\|^2=l!\displaystyle\prod_{j=1}^l\ (j-a-1)\|y(0)\|^2,\ l>0$ \\
 & \\
\hline
 & \\
$a_{2} \not\in\ \Z_{<0}$ & $\|x(k)\|^2=\frac{\displaystyle\prod_{j=1}^k\ (j+a_1)}{\displaystyle\prod_{j=1}^k\ (j-\bar a_2-1)}\|x(0)\|^2,\ k>0$ \\
 & \\
\hline
 & \\
$a_{1}=0,\ a_{2}\in\ \Z_{<0}$ & $\|x(k)\|^2=k!\displaystyle\prod_{j=1}^k\ (j-a_2-1)\|x(0)\|^2,\ k>0$ \\
 & \\
\hline 
\end{tabular}
\caption{}\label{tab:asymptotics}
\end{center}
\end{table}
From now on, we shall assume that $\|x(0)\|^2=1=\|y(0)\|^2$. We conclude this paragraph by giving the action of $H$, $E$ and $F$ in the four cases. In fact, the action of $H$ is the same in all cases, and is given by
$$H\cdot z(k,l)=(a_{1}-a_{2}+a+2(k-l))z(k,l).$$ Remark in particular that $Supp(V)=a_{1}-a_{2}+a+2\Z$ and that all non zero weight spaces are infinite dimensional.
The action of $E$ and $F$ is given by:
\begin{enumerate}[(A)]
\item \begin{subequations}\label{action1}
\begin{align}
E \cdot z(k,l) = & (a_{2}-k)z(k+1,l)+lz(k,l-1)\\
F\cdot z(k,l) = & (a_{1}+k)z(k-1,l)+(a-l)z(k,l+1)
\end{align}
\end{subequations}
\item \begin{subequations}\label{action2}
\begin{align}
E \cdot z(k,l) = & (a_{2}-k)z(k+1,l)+l(a-l+1)z(k,l-1)\\
F\cdot z(k,l) = & (a_{1}+k)z(k-1,l)+z(k,l+1)
\end{align}
\end{subequations}
\item \begin{subequations}\label{action3}
\begin{align}
E \cdot z(k,l) = & z(k+1,l)+lz(k,l-1)\\
F\cdot z(k,l) = & k(a_{2}-k+1)z(k-1,l)+(a-l)z(k,l+1)
\end{align}
\end{subequations}
\item \begin{subequations}\label{action4}
\begin{align}
E \cdot z(k,l) = & z(k+1,l)+l(a-l+1)z(k,l-1)\\
F\cdot z(k,l) = & k(a_{2-k+1)}z(k-1,l)+z(k,l+1)
\end{align}
\end{subequations}
\end{enumerate}


\subsection{Highest and lowest weight modules}


In this section, we investigate which highest or lowest weight modules are submodules of $V$. Remark that $z(k,l)$ and $z(k',l')$ have the same weight if $k-l=k'-l'$. Let $n_0 \in\ \Z$. Assume first that $n_0\geq 0$. Consider a vector of the form 
$$v=\displaystyle\sum_{l\geq 0}\ u_lz(l+n_0,l).$$
We want to determine $u_l$ such that $E\cdot v=0$ and $\displaystyle\sum_{l\geq 0}\ |u_l|^2\|z(l+n_0,l)\|^2<\infty$. From equations \eqref{action1}, \eqref{action2}, \eqref{action3}, \eqref{action4}, we see that in general 
$$E \cdot z(l+n_0,l)=a(l+n_0)z(l+n_0+1,l)+b(l)z(l+n_0,l-1),$$ where $a(l+n_0)\not=0$, $b(0)=0$, and $b(l)\not=0$ for positive $l$. Now the equation $E\cdot v=0$ gives:
$$\displaystyle\sum_{l\geq 0}\ \left(u_la(l+n_0)+u_{l+1}b(l+1)\right)z(l+n_0+1,l)=0.$$
Therefore, we must have 
$$\forall\ l\geq 0,\ u_la(l+n_0)+u_{l+1}b(l+1)=0.$$
Hence, 
$$u_l=(-1)^l\times \frac{\displaystyle\prod_{j=1}^l\ a(j+n_0-1)}{\displaystyle\prod_{j=1}^l\ b(j)}u_0.$$
We will assume now that $u_0=1$. To check the convergence condition we give the asymptotic behavior of $|u_l|^2$ in the four cases:
\begin{enumerate}[(A)]
 \item $|u_l|^2 \sim l^{n_0-1-a_2}$
 \item $|u_l|^2 \sim \frac{l^{n_0+a-a_2}}{l!}$
 \item $|u_l|^2 \sim \frac{1}{l!}$
 \item $|u_l|^2 \sim \frac{l^{a+1}}{(l!)^2}$
\end{enumerate}
Now, recall that $\|z(l+n_0,l)\|^2=\|x(l+n_0)\|^2\|y(l)\|^2$. Using the asymptotic behavior given in table \ref{tab:asymptotics}, we conclude that in all cases we have $|u_l|^2\|z(l+n_0,l)\|^2\sim l^{a+a_1-a_2+2n_0}$. Thus, the convergence condition holds if and only if $2n_0<-1-a-a_1+a_2$.

Assume now that $n_0<0$. Then if $a_1\not=0$, the vector $z(l+n_0)$ exists for all non negative $l$. In this case, the above computation still holds. Hence we find a highest weight vector of weight $a_1-a_2+a+2n_0$ in $V$ if and only if $2n_0<-1-a-a_1+a_2$. If $a_1=0$, the vector $z(l+n_0)$ exists only for $l+n_0\geq 0$. In this case, the equation $E\cdot \displaystyle\sum_{l\geq -n_0}\ u_lz(l+n_0,l)=0$ gives $u_{-n_0}=0$ and by induction $u_l=0$ for all $l$.

We can now summarize our results in the following:

\begin{prop}\label{prop:discHW}
If a simple highest weight module $N(\lambda,0)$, of highest weight $\lambda$, is a Hilbert submodule of $V$, then $\lambda=a_1+a-a_2+2n_0$ for some integer $n_0$. Conversely, 
 \begin{enumerate}
  \item Assume that $a_1=0$. Then the simple highest module $N(a-a_2+2n_0,0)$, of highest weight $a-a_2+2n_0$, is a Hilbert submodule of $V$ if and only if $0\leq 2n_0<-1-a+a_2$.
  \item Assume that $a_1\not=0$. Then the simple highest module $N(a_1+a-a_2+2n_0,0)$, of highest weight $a_1+a-a_2+2n_0$, is a Hilbert submodule of $V$ if and only if $2n_0<-1-a-a_1+a_2$.
 \end{enumerate}
\end{prop}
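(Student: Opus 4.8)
The plan is to reduce the statement about Hilbert submodules to the explicit computation already carried out in the text, and to verify that the formal highest weight vector constructed there does (or does not) lie in the completion $\hat V$. The first step is to recall, from Theorem \ref{thm:tens-alg} and the discussion preceding it, that any simple highest weight module $W$ with $Supp(W)\subset Supp(V)=a_1-a_2+a+2\Z$ has highest weight of the form $a_1-a_2+a+2n_0$ for some $n_0\in\Z$; this gives the first assertion. So the content is the converse: for which $n_0$ does there exist a genuine (i.e. square-summable) vector $v=\sum_l u_l z(l+n_0,l)$ annihilated by $E$, generating a copy of $N(a_1+a-a_2+2n_0,0)$ inside $\hat V$.

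Next I would carry out the solution of the recursion $E\cdot v=0$ exactly as in the body of the section: writing $E\cdot z(l+n_0,l)=a(l+n_0)z(l+n_0+1,l)+b(l)z(l+n_0,l-1)$ with $a(l+n_0)\neq 0$, $b(0)=0$, $b(l)\neq 0$ for $l>0$ (read off from \eqref{action1}--\eqref{action4}), one gets the unique-up-to-scalar solution $u_l=(-1)^l\bigl(\prod_{j=1}^l a(j+n_0-1)\bigr)/\bigl(\prod_{j=1}^l b(j)\bigr)u_0$, normalized by $u_0=1$. Then I would invoke the asymptotic estimates for $|u_l|^2$ listed in the four cases (A)--(D) together with Table \ref{tab:asymptotics} for $\|x(l+n_0)\|^2$ and $\|y(l)\|^2$, to obtain the uniform asymptotic $|u_l|^2\|z(l+n_0,l)\|^2\sim l^{a+a_1-a_2+2n_0}$. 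Hence $\sum_l |u_l|^2\|z(l+n_0,l)\|^2<\infty$ if and only if $a+a_1-a_2+2n_0<-1$, i.e. $2n_0<-1-a-a_1+a_2$. Since $v$ is then a highest weight vector in $\hat V$ of weight $a_1-a_2+a+2n_0$ with $FE$ acting on it by a scalar, the cyclic submodule it generates is the simple highest weight module $N(a_1+a-a_2+2n_0,0)$ (using that submodules of $V$ are graded, as recalled from \cite{Fe90}).

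The one remaining subtlety is the case $a_1=0$, where the index $k$ ranges over $\Z_{\geq 0}$ rather than $\Z$. Here, when $n_0<0$, the vector $z(l+n_0,l)$ only exists for $l\geq -n_0$; rerunning the recursion $E\cdot\sum_{l\geq -n_0}u_l z(l+n_0,l)=0$ forces $u_{-n_0}=0$ at the boundary and then $u_l=0$ for all $l$ by induction, so no highest weight vector exists for $n_0<0$. Combined with the convergence condition $2n_0<-1-a+a_2$ for $n_0\geq 0$, this yields $0\leq 2n_0<-1-a+a_2$, which is case (1); case (2) is the computation above with $a_1\neq 0$ and no boundary obstruction.

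The step I expect to require the most care is the asymptotic bookkeeping: one must combine the case-dependent growth of $|u_l|^2$ with the case-dependent growth of the norms $\|x(l+n_0)\|^2$ and $\|y(l)\|^2$ and check that the factorial factors cancel so that the final exponent $a+a_1-a_2+2n_0$ is indeed the same in all four cases. This is routine Stirling-type estimation but is where a sign or a shift could easily go wrong, so I would do it carefully case by case rather than citing symmetry.
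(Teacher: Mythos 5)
Your proposal is correct and follows essentially the same route as the paper: solving the recursion $E\cdot v=0$ coefficientwise, combining the case-by-case asymptotics of $|u_l|^2$ with the norm asymptotics of Table \ref{tab:asymptotics} to get $|u_l|^2\|z(l+n_0,l)\|^2\sim l^{a+a_1-a_2+2n_0}$, and treating the $a_1=0$, $n_0<0$ boundary case by showing the recursion forces $u_l=0$. The only cosmetic difference is that you route the "only if" part through Theorem \ref{thm:tens-alg}, while the paper simply reads it off from $Supp(V)=a_1-a_2+a+2\Z$; both amount to the same support argument.
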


Let us now turn to lowest weight modules. First assume that $n_0\geq 1$. We want to determine $u_l$ such that $F\cdot \displaystyle\sum_{l\geq 0}\ u_lz(l+n_0,l)=0$ and $\displaystyle\sum_{l\geq 0}\ |u_l|^2\|z(l+n_0,l)\|^2<\infty$. As above, we write in general 
$$F\cdot z(l+n_0,l)=a'(l+n_0)z(l+n_0-1,l)+b'(l)z(l+n_0,l+1).$$ We remark that we have $u_0=0$ (since $a'(n_0)\not=0$)  and by induction $u_l=0$. This still holds if $n_0<1$ and $a_1\not=0$.

Assume then that $n_0<1$ and $a_1=0$. We want to determine $u_l$ such that $F\cdot \displaystyle\sum_{l\geq -n_0}\ u_lz(l+n_0,l)=0$ and $\displaystyle\sum_{l\geq -n_0}\ |u_l|^2\|z(l+n_0,l)\|^2<\infty$. As above, we write in general 
$$F\cdot z(l+n_0,l)=a'(l+n_0)z(l+n_0-1,l)+b'(l)z(l+n_0,l+1).$$ Now we have $a'(0)=0$. As above we write 
$$0=F\cdot \displaystyle\sum_{l\geq -n_0}\ u_lz(l+n_0,l)=\displaystyle\sum_{l\geq -n_0}\ (u_{l+1}a'(l+n_0+1)+u_lb'(l))z(l+n_0,l+1).$$ We deduce from that the expression of $u_l$, that is
$$u_l=(-1)^l\times \frac{\displaystyle\prod_{j=1}^l\ b'(j-1)}{\displaystyle\prod_{j=1}^l\ a'(j+n_0)}u_0.$$
We then find the asymptotic behavior of $|u_l|^2$ from which we conclude that the asymptotic behavior of $|u_l|^2\|x(l+n_0,l)\|^2$ is $l^{a_2-a-2n_0}$. Thus we have proved the following:

\begin{prop}\label{prop:discLW}
 If a simple lowest weight module $N(0,-\lambda)$, of lowest weight $\lambda$, is a Hilbert submodule of $V$, then $\lambda=a_1+a-a_2+2n_0$ for some integer $n_0$. Conversely, 
 \begin{enumerate}
  \item Assume that $a_1=0$. Then the simple lowest module $N(0,-a+a_2-2n_0)$, of lowest weight $a-a_2+2n_0$, is a Hilbert submodule of $V$ if and only if $1+a_2-a<2n_0\leq 0$.
  \item Assume that $a_1\not=0$. Then $V$ has no Hilbert submodule isomorphic to the simple lowest module $N(0,-a_1-a+a_2-2n_0)$, of lowest weight $a_1+a-a_2+2n_0$, for any $n_0$.
 \end{enumerate}
\end{prop}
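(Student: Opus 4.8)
The plan is to mimic, for lowest weight modules, the argument already carried out for highest weight modules, keeping careful track of which basis vectors actually exist in the tensor product. First I would record that a lowest weight vector of weight $a_1-a_2+a+2n_0$ must be a (convergent) combination $v=\sum_l u_l\,z(l+n_0,l)$ of basis vectors having that weight, and then solve $F\cdot v=0$. Writing $F\cdot z(l+n_0,l)=a'(l+n_0)\,z(l+n_0-1,l)+b'(l)\,z(l+n_0,l+1)$ from the explicit formulae \eqref{action1}--\eqref{action4}, the coefficient $a'(l+n_0)$ is (up to the nonzero constant coming from the $N(a,0)$ factor) essentially $a_1+l+n_0$ when $a_1\neq 0$, and equals $l+n_0$ when $a_1=0$; the coefficient $b'(l)$ is always nonzero. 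This dichotomy is exactly why the two bullets of the proposition look so different.

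Then I would treat the generic range first: if $n_0\geq 1$, or if $n_0<1$ but $a_1\neq 0$, then $l=0$ is an allowed index, the vector $z(l+n_0-1,l)$ with $l=0$ still exists, and $a'(n_0)\neq 0$ (because $a_1+n_0\neq 0$ when $a_1\notin\Z$, and generally $a_1+n_0$ can only vanish in the excluded integer situations — here I would point to the constraints $-1<a_1<0$ or $\Re(a_1)<0$ in cases (A)(B)). Comparing the coefficient of $z(n_0-1,0)$ in $F\cdot v=0$ forces $u_0=0$, and then the recursion $u_{l+1}a'(l+n_0+1)+u_l b'(l)=0$ propagates $u_l=0$ for all $l$ by induction, so no such submodule exists. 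This disposes of bullet (2) entirely and of the part of bullet (1) where $2n_0>0$ is impossible; it also explains why only $n_0\le 0$ survives when $a_1=0$.

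For the remaining case $a_1=0$ and $n_0\le 0$, the sum runs over $l\geq -n_0$, the lowest-index equation is the one indexed by $l=-n_0$, and now $a'(l+n_0)$ vanishes precisely at $l=-n_0$ (since $a_1=0$), so that equation is automatically satisfied and does not kill $u_{-n_0}$. Reindexing, I would derive the closed form $u_l=(-1)^l\,\bigl(\prod_{j=1}^l b'(j-1)\bigr)\big/\bigl(\prod_{j=1}^l a'(j+n_0)\bigr)\,u_0$ and then compute the asymptotics of $|u_l|^2$ in each of the four cases, combine with the norm asymptotics from Table~\ref{tab:asymptotics} exactly as was done for the highest weight case, and conclude that $|u_l|^2\|z(l+n_0,l)\|^2\sim l^{a_2-a-2n_0}$; summability then reads $a_2-a-2n_0<-1$, i.e. $1+a_2-a<2n_0$, which together with $n_0\le 0$ gives the stated range. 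Finally I would note that, by Proposition~\ref{prop:WMsl2} and the structure of degree-one modules, such a convergent lowest weight vector generates a copy of $N(0,-a+a_2-2n_0)$ sitting inside $\hat V$, and that the correspondence $n_0\leftrightarrow\lambda=a-a_2+2n_0$ is the asserted one.

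The main obstacle I anticipate is not any single step but the bookkeeping across the four cases (A)--(D): one must verify in each case that $a'(l+n_0)$ is nonzero for $l>-n_0$ (so that the recursion is well defined and the closed form makes sense) and that the asymptotic exponent really collapses to the uniform value $l^{a_2-a-2n_0}$ independently of the case, just as happened on the highest weight side. The integrality hypotheses distinguishing (A)--(D) are exactly what guarantee these nonvanishing statements, so the care lies in invoking them correctly rather than in any hard analysis.
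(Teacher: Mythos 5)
Your proposal is correct and follows essentially the same route as the paper: write $F\cdot z(l+n_0,l)=a'(l+n_0)z(l+n_0-1,l)+b'(l)z(l+n_0,l+1)$, note that $a'(n_0)\neq 0$ forces all $u_l=0$ when $n_0\geq 1$ or $a_1\neq 0$, and in the remaining case $a_1=0$, $n_0\leq 0$ solve the recursion explicitly and match $|u_l|^2\|z(l+n_0,l)\|^2\sim l^{a_2-a-2n_0}$ against Table \ref{tab:asymptotics} to get $1+a_2-a<2n_0\leq 0$. The only quibble is cosmetic: when $a_1=0$, $a_2\in\Z_{<0}$ (cases (C)--(D)) the coefficient $a'(k)=k(a_2-k+1)$ carries an extra nonvanishing factor coming from the $N(0,a_2)$ factor rather than from $N(a,0)$, but this does not affect your argument.
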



\subsection{Principal and complementary series}


In this section, we investigate which modules from the principal or the complementary series are submodules of $V$. Recall that the support of such a module $M$ is of the form $b+2\Z$. As we have $supp(V)=a_1-a_2+a+2\Z$, we can assume that $b=a_{1}-a_{2}+a$, that is $M=N(b_{1},b_{2})$ with $b_{1}-b_{2}=a_{1}-a_{2}+a$ and either $b_{1}=-1-x'+iy',\ b_{2}=x'+iy'$ or $-1<b_{1},b_{2}<0$. Let $v$ denote the weight vector of $N(b_{1},b_{2})$ having weight $b_{1}-b_{2}$. Then from the action of $E$ and $F$ given in section \ref{ssec:sl2}, we find that $FE\cdot v=\xi v$ with 
\begin{subequations}\label{cond:pcpl-cpl}
\begin{align}
\xi \leq -\left(\frac{1+a+a_{1}-a_{2}}{2}\right)^2, & \ \mbox{if } b_{1}=-1-x'+iy',\ b_{2}=x'+iy',\\
-\left(\frac{1+a+a_{1}-a_{2}}{2}\right)^2 < \xi <  -&\left(\frac{a+a_{1}-a_{2}}{2}\right)\left(\frac{a+a_{1}-a_{2}+2}{2}\right),\\
 & \ \mbox{if } -1<b_{1},b_{2}<0.\notag
\end{align}
\end{subequations}

Now remark that the vector $z(k,l)$ has weight $a_{1}-a_{2}+a$ if and only if $k=l$. Therefore we are looking for a vector $$v=\displaystyle\sum_{n\geq 0}\ u_{n}z(n,n)$$ such that $FE\cdot v=\xi v$ (with $\xi$ satisfying one of the conditions \eqref{cond:pcpl-cpl}) and $$\displaystyle\sum_{n\geq 0}\ |u_{n}|^2\|z(n,n)\|^2<\infty.$$ Conversely if the vector $v$ satisfy both these conditions it is easy to check that $v$ generates a simple submodule of $V$.

Using equations \eqref{action1}, \eqref{action2}, \eqref{action3}, and \eqref{action4}, we compute $FE\cdot v$ in the four cases and write it in the form 
$$FE\cdot v=\displaystyle\sum_{n\geq 0}\ v_{n}z(n,n),$$ for some sequence $v_n$ completely determined by the $u_k$. Then we can identify the coefficients of $z(n,n)$ in $FE\cdot v$ and in $\xi v$. We obtain:
\begin{enumerate}[(A)]
\item \begin{subequations}\label{eq:diffA}
\begin{align}
 & (a_{1}+1)u_{1}+(a_{2}(a_{1}+1))u_{0} = \xi u_{0},\\
 & (n+2)(n+2+a_{1})u_{n+2}\\
 & +\left((a_{2}-n-1)(a_{1}+n+2)+(n+1)(a-n)\right)u_{n+1}\notag \\
 & +(a-n)(a_{2}-n)u_{n} = \xi u_{n+1},\ \forall\ n\geq 0.\notag
\end{align}
\end{subequations}
\item \begin{subequations}
\begin{align}
 & a(a_{1}+1)u_{1}+(a_{2}(a_{1}+1))u_{0} = \xi u_{0},\\
 & (n+2)(n+2+a_{1})(a-n-1)u_{n+2}\\
 & +\left((a_{2}-n-1)(a_{1}+n+2)+(n+1)(a-n)\right)u_{n+1}\notag\\
 & +(a_{2}-n)u_{n} = \xi u_{n+1},\ \forall\ n\geq 0.\notag
\end{align}
\end{subequations}
\item \begin{subequations}
\begin{align}
 & a_2u_{1}+a_{2}u_{0} = \xi u_{0},\\
 & (n+2)^2(a_{2}-n-1)u_{n+2}\\
 & +\left((a_{2}-n-1)(n+2)+(n+1)(a-n)\right)u_{n+1}\notag \\
 & +(a-n)u_{n} = \xi u_{n+1},\ \forall\ n\geq 0.\notag
\end{align}
\end{subequations}
\item \begin{subequations}
\begin{align}
 & aa_2u_{1}+a_{2}u_{0} = \xi u_{0},\\
 & (n+2)^2(a-n-1)(a_{2}-n-1)u_{n+2}\\
 & +\left((a_{2}-n-1)(n+2)+(n+1)(a-n)\right)u_{n+1}\notag \\
 & +u_{n} = \xi u_{n+1},\ \forall\ n\geq 0.\notag
\end{align}
\end{subequations}
\end{enumerate}

In the first case, we see using table \ref{tab:asymptotics} that $\|z(n,n)\|^2 \sim (n+1)^{2+\Re(s)}$. Therefore, the sequence $u_n$ belongs to the Hilbert space $V$ if and only if $\displaystyle\sum_{n\geq 0}\ |u_{n}|^2(n+1)^{2+\Re(s)}<\infty$.

Now we consider the following change of variable:
\begin{enumerate}
\item[(B)] $v_{0}=u_{0}$ and $v_{n}=\displaystyle\prod_{j=1}^n\ (a+1-j)\times u_{n},\ \forall\ n>0$.
\item[(C)] $v_{0}=u_{0}$ and $v_{n}=\displaystyle\prod_{j=1}^n\ (a_{2}+1-j)\times u_{n},\ \forall\ n>0$.
\item[(D)] $v_{0}=u_{0}$ and $v_{n}=\displaystyle\prod_{j=1}^n\ (a+1-j)(a_{2}+1-j)\times u_{n},\ \forall\ n>0$.
\end{enumerate}

Then it is easily check that the sequence $v_{n}$ satisfy equations \eqref{eq:diffA}. Moreover, the condition $\displaystyle\sum_{n\geq 0}\ |u_{n}|^2\|z(n,n)\|^2<\infty$ is then equivalent to the condition $\displaystyle\sum_{n\geq 0}\ |v_{n}|^2(n+1)^{2+\Re(s)}<\infty$, which is the condition satisfied by the sequence $u_{n}$ in case $(A)$.

Set $\mu = \xi -a_{2}(1+a+a_{1})$ and $p=aa_{2}$. Note that in all cases $\mu+\left(\frac{1+s}{2}\right)^2$ is a real number which satisfies:
$$\mu+\left(\frac{1+s}{2}\right)^2\leq 0,\ \mbox{if } x \mbox{ generates a module from the principal series},$$
$$0<\mu+\left(\frac{1+s}{2}\right)^2<\frac{1}{4},\ \mbox{if } x \mbox{ generates a module from the complementary series}.$$ From the above discussion, we are left with the following two equations:

\begin{subequations}\label{eq:diffA1}
\begin{align}
& (a_{1}+1)u_{1}=(p+\mu)u_{0}, \\
& (n+2)(n+2+a_{1})u_{n+2}\label{eqg:diffA}\\
& +\left(s+2-\mu - (n+2)(n+2+a_{1}) - (n-a)(n-a_{2})\right)u_{n+1}\notag \\
& +(n-a)(n-a_{2})u_{n}=0.\notag
\end{align}
\end{subequations}

It is clear that this difference equation has a unique solution for a given $u_{0}$. In the following, we shall assume without loss of generality that $u_{0}=1$. To check wether 
$$\displaystyle\sum_{n\geq 0}\ |u_{n}|^2n^{2+\Re(s)}<\infty$$ holds, we need to understand the asymptotic behavior of the unique solution. We will use two different approaches.

\subsubsection{Asymptotics using a discrete derivative}

Equation \eqref{eqg:diffA} has two independent fundamental solutions. We will denote them by $u_n^1$ and $u_n^2$. Then our sequence $u_n$ satisfying equations \eqref{eq:diffA1} is an unknown linear combination of these solutions.

Define an operator $D$ (discrete derivative) by the formula $D(u_{n})=u_{n+1}-u_{n}$. Then we can rewrite equation \eqref{eqg:diffA} with $D$ as follows
\begin{equation}\label{eq:diffD}
(n+2)(n+2+a_{1})D^2(u_{n})+\left(6+s-p+2a_{1}-\mu +n(4+s)\right)D(u_{n})+(s+2-\mu)u_{n}=0.
\end{equation}
Now we will use a discrete version of the local analysis of differential equation (see \cite{BO78}). For  equation \eqref{eq:diffD}, the point $\infty$ is regular-singular (\cite[section 5.2]{BO78}). Therefore, we know from the discrete version of Fuchs theorem (\cite[section 5.2]{BO78}) that the fundamental solutions of the difference equation have asymptotics of the form $n^{\alpha}\displaystyle\sum_{n\geq 0}\ A_{k}n^{-k}$ and $n^{\beta}\displaystyle\sum_{n\geq 0}\ B_{k}n^{-k}$ or $n^{\alpha}\ln (n)\displaystyle\sum_{n\geq 0}\ A_{k}n^{-k}$, for some complex numbers $\alpha$ and $\beta$, and where $A_0$ and $B_0$ are not zero. To find $\alpha$ (and $\beta$), we write $u^1_n=A_0n^{\alpha}+A_1n^{\alpha-1}+A_2n^{\alpha-2}+o(n^{\alpha-2})$ for large $n$. Evaluating now equation \eqref{eqg:diffA} gives
$(\alpha^2+(3+s)\alpha+s+2-\mu)A_0+o(\frac{1}{n^2})=0$, from which we conclude that 
$\alpha^2+(3+s)\alpha+s+2-\mu=0$. Thus we find:
$$\alpha=\frac{-s-3}{2}\pm \sqrt{\mu+\left(\frac{1+s}{2}\right)^2}.$$

%
%

Now, if $v$ generates a module from the principal series, we have seen that $\mu+\left(\frac{1+s}{2}\right)^2\leq 0$. First, if $\mu+\left(\frac{1+s}{2}\right)^2< 0$, then the square-norm of both fundamental solutions is equivalent to $n^{2\Re(\alpha)}= n^{-\Re(s)-3}$. But any solution is a linear combination of these fundamental solutions. Hence the square-norm of every solution is equivalent to $n^{-\Re(s)-3}$. If $\mu+\left(\frac{1+s}{2}\right)^2 = 0$, then the square-norm of the fundamental solutions are equivalent to $n^{-\Re(s)-3}$ or to $n^{-\Re(s)-3}\ln^2 (n)$. Hence the square-norm of every solution is equivalent to $n^{-\Re(s)-3}$ or to $n^{-\Re(s)-3}\ln^2 (n)$. Thus if $v$ generates a module from the principal series, we have $ |u_{n}|^2n^{2+\Re(s)} \sim n^{-1}$ or $|u_{n}|^2n^{2+\Re(s)} \sim n^{-1}\ln^2 (n)$. Hence, the sequence $u_{n}$ is not in the Hilbert space $V$. Thus we have proved the 

\begin{prop}\label{prop:discPS}
 Let $b_1=-1-x'+iy'$ and $b_2=x'+iy'$, with $y'\not=0$. Then the simple weight module $N(b_1,b_2)$ is a never a Hilbert submodule of $V$.
\end{prop}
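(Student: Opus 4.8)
\textbf{Proof plan for Proposition \ref{prop:discPS}.}
The strategy is to show that any weight vector $v=\sum_{n\geq 0}u_n z(n,n)$ of weight $a_1-a_2+a$ on which $FE$ acts by a scalar $\xi$ fails the square-summability condition $\sum_n |u_n|^2\|z(n,n)\|^2<\infty$ whenever $\xi$ is in the range \eqref{cond:pcpl-cpl} corresponding to the principal series. By the reductions already set up — the change of variables $(B)$, $(C)$, $(D)$ that transports all four cases to the difference equations \eqref{eq:diffA1}, together with the asymptotic identity $\|z(n,n)\|^2\sim (n+1)^{2+\Re(s)}$ — it suffices to analyze the single recurrence \eqref{eqg:diffA} and prove that every nonzero solution $u_n$ satisfies $|u_n|^2 n^{2+\Re(s)}\sim n^{-1}$ (possibly with an extra $\ln^2 n$ factor), which is not summable.

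The key step is the asymptotic analysis of \eqref{eqg:diffA}, carried out via the discrete derivative $D(u_n)=u_{n+1}-u_n$. First I would rewrite \eqref{eqg:diffA} as the second-order equation \eqref{eq:diffD} in $D$; the point $n=\infty$ is regular-singular, so the discrete Fuchs theorem from \cite[section 5.2]{BO78} guarantees that the two fundamental solutions $u_n^1,u_n^2$ have asymptotic expansions $n^\alpha\sum_k A_k n^{-k}$ and $n^\beta\sum_k B_k n^{-k}$ (or, in the resonant case $\alpha=\beta$, one solution acquires a $\ln n$ factor). Substituting the ansatz $u_n^1=A_0 n^\alpha+A_1 n^{\alpha-1}+\cdots$ into \eqref{eqg:diffA} and collecting the leading order yields the indicial equation $\alpha^2+(3+s)\alpha+s+2-\mu=0$, hence $\alpha=\frac{-s-3}{2}\pm\sqrt{\mu+\left(\frac{1+s}{2}\right)^2}$. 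The principal-series hypothesis $y'\neq 0$ forces, through the definition $\mu=\xi-a_2(1+a+a_1)$ and the bound \eqref{cond:pcpl-cpl}, that $\mu+\left(\frac{1+s}{2}\right)^2\leq 0$, so $\Re(\alpha)=\frac{-\Re(s)-3}{2}$ for both roots (and both roots coincide when the discriminant vanishes). Therefore $|u_n^i|^2\sim n^{2\Re(\alpha)}=n^{-\Re(s)-3}$ (with an extra $\ln^2 n$ in the resonant subcase), and since $u_n$ is a linear combination of $u_n^1,u_n^2$ the same asymptotic holds for $u_n$.

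Multiplying through by the weight-space norm gives $|u_n|^2\|z(n,n)\|^2\sim |u_n|^2 n^{2+\Re(s)}\sim n^{-1}$ (or $n^{-1}\ln^2 n$), and $\sum_n n^{-1}$ diverges; so no nonzero weight vector of the relevant weight lies in the Hilbert space $V$. Since a submodule isomorphic to $N(b_1,b_2)$ would have to contain its lowest-$K$-type vector of weight $b_1-b_2=a_1-a_2+a$, which would be such a vector, no copy of $N(b_1,b_2)$ embeds as a Hilbert submodule. I expect the main obstacle to be justifying rigorously the applicability of the discrete Fuchs theorem and, in particular, handling the resonant case $\alpha=\beta$ cleanly — i.e. confirming that the logarithmic solution genuinely contributes only the harmless $\ln^2 n$ correction and that there is no cancellation in the linear combination $u_n$ that could improve the decay; this is where care with \cite{BO78} is needed, though the conclusion (non-summability) is robust because both independent solutions share the critical exponent.
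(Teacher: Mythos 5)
Your proposal is correct and follows essentially the same route as the paper: the reduction via the change of variables $(B)$, $(C)$, $(D)$ to the single recurrence \eqref{eqg:diffA}, the rewriting with the discrete derivative and the discrete Fuchs theorem of \cite{BO78}, the indicial roots $\alpha=\frac{-s-3}{2}\pm\sqrt{\mu+\left(\frac{1+s}{2}\right)^2}$ with $\mu+\left(\frac{1+s}{2}\right)^2\leq 0$ in the principal-series range, and the resulting asymptotics $|u_n|^2\|z(n,n)\|^2\sim n^{-1}$ (or $n^{-1}\ln^2 n$), which is not summable. This is precisely the paper's argument, including its treatment of the degenerate case $\mu+\left(\frac{1+s}{2}\right)^2=0$.
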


Note however that the principal series whose support is $a_1-a_2+a+2\Z$ is almost contained in the Hilbert space, in the sense that it is contained in 
$$V_{\epsilon}:=\left\{\sum_{k,l}\ u_{k,l}z(k,l)\ : \ \sum_{k,l}\ |u_{k,l}|^2\|z(k,l)\|^2(k^2+l^2)^{-\epsilon}<\infty \right\},$$ for any positive $\epsilon$.

On the other hand, if $v$ generates a module from the complementary series, we have seen that $0<\mu+\left(\frac{1+s}{2}\right)^2<\frac{1}{4}$. Hence we find that 
$|u^1_{n}|^2n^{2+\Re(s)} \sim n^{-1-\sqrt{\mu+\left(\frac{1+s}{2}\right)^2}}$ and $|u^2_{n}|^2n^{2+\Re(s)} \sim n^{-1+\sqrt{\mu+\left(\frac{1+s}{2}\right)^2}}$. As our solution $u_n$ is an unknown linear combination of $u_n^1$ and $u_n^2$, we cannot conclude.

\subsubsection{Asymptotics using a differential equation}

As we have seen, we need an other approach to deal with complementary series. From now on, we assume that 
$0<\mu+\left(\frac{1+s}{2}\right)^2<\frac{1}{4}$. For $-1<t<1$, set $S(t)=\displaystyle\sum_{n\geq 0}\ u_{n}t^n$. Then, the sequence $u_{n}$ satisfies the equations \eqref{eq:diffA1} if and only if $S(t)$ is a solution of the following differential equation:
\begin{equation}
t(1-t)S''(t)+(1+a_{1}-(1+a_{1}-s)t)S'(t)-\left(p+\frac{\mu}{1-t}\right)S(t)=0
\end{equation}
Moreover, we must have $S(0)=1$ and $S'(0)=\frac{p+\mu}{1+a_{1}}$. The unique solution to this Cauchy problem is the function
$$S(t)=(1-t)^r\!\phantom{l}_{2}F_{1}(r-a,r-a_{2};1+a_{1};t),$$ where $r=\frac{1+s}{2}-\sqrt{\mu+\left(\frac{1+s}{2}\right)^2}$ and $\!\phantom{l}_{2}F_{1}(r-a,r-a_{2};1+a_{1};t)$ is the corresponding hypergeometric function:
$$\!\phantom{l}_{2}F_{1}(r-a,r-a_{2};1+a_{1};t)=\displaystyle\sum_{n\geq 0}\ \frac{(r-a)_{n}(r-a_{2})_{n}}{(1+a_{1})_{n}}\frac{t^n}{n!}.$$
Here $(b)_{n}$ is the Pochhammer symbol, that is 
$$(b)_{0}=1, \quad (b)_{n}=\displaystyle\prod_{j=1}^n\ (b-1+j),\forall\ n>0.$$ As $1+a_1\not\in\ \Z_{\leq 0}$, the function $S(z)$ is well-defined on the (open) unit disc $D$ and is holomorphic. Remark that $a$ and $a_2$ play a symmetric role in the definition of $S$. In the sequel, we shall write $a_{(2)}$ to refer either to $a$ or $a_2$. Before going further, we need to collect several facts about the hypergeometric function. We refer the reader to \cite{Ba35} or \cite{AAR99}.

\begin{lemm}[Gauss Theorem]\label{lem:Gauss}
 Let $\alpha,\ \beta,\ \gamma \in\ \C$ such that $\alpha\not\in\ \Z_{\leq 0}$, $\beta\not\in\ \Z_{\leq 0}$,  and $\gamma\not\in\ \Z_{\leq 0}$. Then:
 \begin{enumerate}
 \item The function $z\mapsto \!\phantom{l}_2F_1(\alpha,\beta;\gamma;z)$ is holomorphic in $D$.
 \item If $\Re(\gamma-\alpha-\beta)>0$, then the function $z\mapsto \!\phantom{l}_2F_1(\alpha,\beta;\gamma;z)$ is continuous in $\bar{D}$.
 \item If $\Re(\gamma-\alpha-\beta)<0$, then there is a non zero constant $C$ such that $$\!\phantom{l}_2F_1(\alpha,\beta;\gamma;z) \sim C(1-z)^{\gamma-\alpha-\beta},\ \mbox{when}\ z\rightarrow 1.$$
 \item If $\gamma-\alpha-\beta=0$, then there is a non zero constant $C$ such that $$\!\phantom{l}_2F_1(\alpha,\beta;\gamma;z) \sim C\log (1-z),\ \mbox{when}\ z\rightarrow 1.$$
\end{enumerate}
\end{lemm}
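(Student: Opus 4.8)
The plan is to treat the four assertions in order, since each is essentially a classical fact about the hypergeometric series, and the real content is keeping track of where the hypotheses on $\alpha,\beta,\gamma$ enter. For part (1), I would simply observe that the series $\sum_{n\geq 0}\frac{(\alpha)_n(\beta)_n}{(\gamma)_n}\frac{z^n}{n!}$ has all its coefficients well-defined because $\gamma\notin\Z_{\leq 0}$ means $(\gamma)_n\neq 0$ for every $n$; then a ratio test on the coefficients gives $\left|\frac{a_{n+1}}{a_n}\right|=\left|\frac{(\alpha+n)(\beta+n)}{(\gamma+n)(n+1)}\right|\to 1$, so the radius of convergence is exactly $1$ and the sum is holomorphic in $D$. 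This part needs no work beyond citing \cite{Ba35} or \cite{AAR99}.

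For parts (2)--(4), the key input is the behaviour of the coefficients $c_n:=\frac{(\alpha)_n(\beta)_n}{(\gamma)_n\,n!}$ as $n\to\infty$. Using $(\lambda)_n=\Gamma(\lambda+n)/\Gamma(\lambda)$ together with Stirling's formula one gets $c_n\sim \frac{\Gamma(\gamma)}{\Gamma(\alpha)\Gamma(\beta)}\,n^{\alpha+\beta-\gamma-1}$. Hence: if $\Re(\gamma-\alpha-\beta)>0$ the exponent $\Re(\alpha+\beta-\gamma-1)<-1$, so the series converges absolutely on $\bar D$ and, being a uniform limit of polynomials, defines a continuous function there — this is (2). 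For (3), when $-1<\Re(\alpha+\beta-\gamma-1)<\cdots$, i.e. $\Re(\gamma-\alpha-\beta)<0$, one compares with the binomial series: $(1-z)^{\gamma-\alpha-\beta}=\sum_n \frac{(\alpha+\beta-\gamma)_n}{n!}z^n$ has coefficients $\sim \frac{1}{\Gamma(\gamma-\alpha-\beta)}n^{\alpha+\beta-\gamma-1}$, the same power of $n$, so the ratio of the two coefficient sequences tends to the nonzero constant $C=\frac{\Gamma(\gamma)\Gamma(\gamma-\alpha-\beta)}{\Gamma(\alpha)\Gamma(\beta)}$; an Abelian/Tauberian argument (or the standard fact that if $c_n/d_n\to C$ and $\sum d_n z^n\to\infty$ with nonnegative-ish coefficients then $\sum c_n z^n\sim C\sum d_n z^n$ as $z\to 1^-$) yields the claimed asymptotic. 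For (4), when $\gamma-\alpha-\beta=0$ the coefficients behave like $c_n\sim \frac{\Gamma(\gamma)}{\Gamma(\alpha)\Gamma(\beta)}\,n^{-1}$, and since $\sum n^{-1}z^n\sim -\log(1-z)$ as $z\to 1^-$, the same comparison gives $\,_2F_1(\alpha,\beta;\gamma;z)\sim C\log(1-z)$ with $C=-\frac{\Gamma(\gamma)}{\Gamma(\alpha)\Gamma(\beta)}$.

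The main obstacle is making the passage from ``coefficients are asymptotically equal'' to ``the power series sums are asymptotically equal as $z\to 1^-$'' fully rigorous: naively this is an Abelian-type statement, but one must be careful because the coefficients are complex, not positive, so I cannot directly invoke the monotone version. The clean way around this is not to prove the asymptotics from scratch at all but to derive (3) and (4) from Euler's integral representation $\,_2F_1(\alpha,\beta;\gamma;z)=\frac{\Gamma(\gamma)}{\Gamma(\beta)\Gamma(\gamma-\beta)}\int_0^1 t^{\beta-1}(1-t)^{\gamma-\beta-1}(1-zt)^{-\alpha}\,dt$ (valid when $\Re(\gamma)>\Re(\beta)>0$, and extended by the symmetry in $\alpha,\beta$ and by analytic continuation in the parameters), and then analyze the integral as $z\to 1$: the singularity of $(1-zt)^{-\alpha}$ at $t=1$ competes with the factor $(1-t)^{\gamma-\beta-1}$, and a local analysis near $t=1$ produces exactly the $(1-z)^{\gamma-\alpha-\beta}$ or $\log(1-z)$ behaviour. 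Since this is a textbook result, in the write-up I would state it with a reference to \cite{Ba35} or \cite{AAR99} and only indicate the integral-representation argument briefly rather than reproduce it in full.
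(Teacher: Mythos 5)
Your proposal is correct and in the end takes the same route as the paper, whose entire proof is a citation of theorems 2.1.2 and 2.1.3 of \cite{AAR99}; your sketch of the coefficient asymptotics and the Euler-integral (or, equivalently, Euler-transformation) argument behind those theorems is sound, including your honest flag that a naive Abelian comparison is delicate for complex coefficients. One minor slip: in part (3) the binomial coefficients satisfy $\frac{(\alpha+\beta-\gamma)_n}{n!}\sim \frac{1}{\Gamma(\alpha+\beta-\gamma)}n^{\alpha+\beta-\gamma-1}$, so the constant is $C=\frac{\Gamma(\gamma)\Gamma(\alpha+\beta-\gamma)}{\Gamma(\alpha)\Gamma(\beta)}$ rather than $\frac{\Gamma(\gamma)\Gamma(\gamma-\alpha-\beta)}{\Gamma(\alpha)\Gamma(\beta)}$ --- immaterial here, since the lemma only asserts the existence of some non zero constant.
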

\begin{proof}
 The first and the second assertions are theorem 2.1.2 of \cite{AAR99}. The last two assertions are theorem 2.1.3 of \cite{AAR99}.
\end{proof}

\begin{lemm}\label{lem:hyperpol}
 Assume that $r-a$ or $r-a_2$ is a non positive integer.
 \begin{enumerate}
 \item Then $\!\phantom{l}_2F_1(r-a,r-a_2;1+a_1;z)$ is polynomial, and is therefore holomorphic in $\C$.
 \item We cannot have $r-a\in\ \Z_{\leq 0}$ and $r-a_2\in\ \Z_{\leq 0}$ unless $r=a=a_2$.
 \item We have $\!\phantom{l}_2F_1(r-a,r-a_2;1+a_1;1)\not=0$.
 \item If $\Re(s)\geq -2$, then $r-a\in\ \Z_{\leq 0}$ (resp. $r-a_2\in\ \Z_{\leq 0}$) implies $r=a$ (resp. $r=a_2$) and therefore $\!\phantom{l}_2F_1(r-a,r-a_2;1+a_1;x)=1$.
\end{enumerate}
\end{lemm}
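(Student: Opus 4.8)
The plan is to establish the four assertions separately, using throughout the standing assumption $0<\mu+\left(\frac{1+s}{2}\right)^2<\frac14$ of this subsection — equivalently $0<\sqrt{\mu+\left(\frac{1+s}{2}\right)^2}<\frac12$ — and the already noted fact that $1+a_1\notin\Z_{\leq 0}$ in all of the cases $(A)$--$(D)$. I would also record two elementary identities: $(r-a)+(r-a_2)=1+a_1-2\sqrt{\mu+\left(\frac{1+s}{2}\right)^2}$, which follows from $2r=1+s-2\sqrt{\mu+\left(\frac{1+s}{2}\right)^2}$ and $a+a_2=s-a_1$; and $\sqrt{\mu+\left(\frac{1+s}{2}\right)^2}=\frac{1+s}{2}-r$.

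For assertion $(1)$: if $r-a=-m$ with $m\in\Z_{\geq 0}$ then, in the paper's Pochhammer convention, $(r-a)_n=(-m)_n=0$ for every $n\geq m+1$, while each $(1+a_1)_n$ is nonzero, so ${}_2F_1(r-a,r-a_2;1+a_1;z)$ reduces to the finite sum over $0\leq n\leq m$, hence is a polynomial and therefore entire; the case $r-a_2\in\Z_{\leq 0}$ is symmetric. For assertion $(2)$: suppose $r-a=-m$ and $r-a_2=-m'$ with $m,m'\in\Z_{\geq 0}$; then $a-a_2\in\Z$, and since $a\in\R_{<0}$ this forces $a_2\in\R$, so we are not in the case $a_1=-1-x+iy$ and hence $a_1\in(-1,0]$. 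The first identity above becomes $2\sqrt{\mu+\left(\frac{1+s}{2}\right)^2}=1+a_1+m+m'$; if $a_1=0$ the right side is $\geq 1$, contradicting $2\sqrt{\cdots}<1$, so $a_1\in(-1,0)$, and then $0<1+a_1+m+m'<1$ forces $m=m'=0$, i.e. $r=a=a_2$. In that case ${}_2F_1(0,0;1+a_1;z)=1$.

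For assertion $(3)$: by symmetry of ${}_2F_1$ in its first two arguments (and of $r$ in $a$ and $a_2$) I may assume $r-a=-m$ with $m\in\Z_{\geq 0}$; if $m=0$ then ${}_2F_1(0,r-a_2;1+a_1;z)=1$, so the value at $z=1$ is $1$. If $m\geq 1$ the series terminates and the Chu--Vandermonde identity gives
$$ {}_2F_1(-m,\,r-a_2;\,1+a_1;\,1)=\frac{(1+a_1-r+a_2)_m}{(1+a_1)_m}, $$
whose denominator is nonzero because $1+a_1\notin\Z_{\leq 0}$. For the numerator, $(1+a_1-r+a_2)_m=\prod_{j=1}^{m}(a_1+a_2-r+j)$, so a factor vanishes only if $r=a_1+a_2+j$ for some $j\in\{1,\dots,m\}$; combining this with $r=a-m$ in the second identity above yields $\sqrt{\mu+\left(\frac{1+s}{2}\right)^2}=\frac{1+m-j}{2}\geq\frac12$, contradicting the standing bound. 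Hence the numerator is nonzero and ${}_2F_1(r-a,r-a_2;1+a_1;1)\neq 0$.

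For assertion $(4)$: assume $\Re(s)\geq -2$ and $r-a_{(2)}=-m$ with $m\in\Z_{\geq 0}$, where $a_{(2)}$ is $a$ or $a_2$ (note $\Re(a_{(2)})<0$ in every case). The second identity gives $\sqrt{\mu+\left(\frac{1+s}{2}\right)^2}=\frac{1+s}{2}-a_{(2)}+m$; taking real parts, $\sqrt{\mu+\left(\frac{1+s}{2}\right)^2}=\frac{1+\Re(s)}{2}-\Re(a_{(2)})+m>-\frac12+m$, which for $m\geq 1$ is $>\frac12$, contradicting the standing bound. Hence $m=0$, i.e. $r=a_{(2)}$, and then ${}_2F_1$ reduces to $1$ since $(0)_n=0$ for $n\geq 1$. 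Apart from the elementary arithmetic, the only step that genuinely requires the complementary-series inequality $\sqrt{\mu+\left(\frac{1+s}{2}\right)^2}<\frac12$ (rather than the weaker $\Re(s)\geq -2$) is the non-vanishing of the Chu--Vandermonde numerator in $(3)$, so that is the point I would check most carefully.
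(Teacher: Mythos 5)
Your proof is correct and follows essentially the same route as the paper's: part (1) from termination of the Pochhammer symbols, part (2) from the identity $2\sqrt{\mu+\left(\frac{1+s}{2}\right)^2}=1+a_1+m+m'$ together with the standing bound, part (3) from Chu--Vandermonde with a check that the numerator Pochhammer cannot vanish, and part (4) by taking real parts in $\sqrt{\mu+\left(\frac{1+s}{2}\right)^2}=\frac{1+s}{2}-a_{(2)}+m$ and using $\Re(a_{(2)})<0$, $\Re(s)\geq -2$. The only quibble is your closing aside: the bound $\sqrt{\mu+\left(\frac{1+s}{2}\right)^2}<\frac12$ is in fact also what drives your arguments for (2) and (4), not just the nonvanishing in (3).
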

\begin{proof}
The first assertion follows from the definition of the Pochhammer symbol.

 Assume that $r-a=-n$ and $r-a_2=-m$. Then $2r-a-a_2+n+m=0$, that is $a_1+1+n+m-2\sqrt{\mu+\left(\frac{s+1}{2}\right)^2}=0$. Therefore we should have $0<1+a_1+m+n<1$. But we have $1+a_1=-x+iy$ or $1+a_1>0$. In the first case the equality $a_1+1+n+m-2\sqrt{\mu+\left(\frac{s+1}{2}\right)^2}=0$ can only hold when $y=0$. Then in both cases $0<1+a_1\leq 1$. Therefore the equality can only hold for $n+m=0$, i.e. $n=m=0$. This prove the second part of the lemma.
 
 The third assertion is a consequence of Chu-Vandermonde theorem \cite[cor 2.2.3]{AAR99}. Indeed, assume for instance that $r-a=-n$. Then $1+a_1-r+a_2=1+s-a-r=1+s-n-2r=2\sqrt{\mu+\left(\frac{s+1}{2}\right)^2}-n$. This last quantity is never an integer. But Chu-Vandermonde theorem implies that $\!\phantom{l}_2F_1(r-a,r-a_2;1+a_1;1)=\frac{(1+a_1-r+a_2)_n}{(1+a_1)_n}$. Therefore, this is not zero.
 
 Finally, assume that $r-a_{(2)}=-n$ for $n\in\ \Z_{>0}$. Then we must have $0<n+\frac{\Re(s)+1}{2}-\Re(a_{(2)})<\frac{1}{2}$. By our hypothesis $\frac{\Re{s}+1}{2}\geq -\frac{1}{2}$. But we have $\Re(a_{(2)})\leq 0$. Thus such a condition never holds.
\end{proof}

\begin{lemm}\label{lem:hyper}
Assume that $r-a$ and $r-a_2$ are not non positive integers.
\begin{enumerate}[(i)]
\item The hypergeometric function is well-defined and continuous on the close unit disc $\bar D$.
\item We have $$\!\phantom{l}_{2}F_{1}(r-a,r-a_{2};1+a_{1};1)=\frac{\Gamma(1+a_{1})\Gamma(1+s-2r)}{\Gamma(1+a_{1}+a-r)\Gamma(1+a_{1}+a_{2}-r)}.$$
\item For $z\in D$, we have $$\!\phantom{l}_{2}F_{1}'(r-a,r-a_{2};1+a_{1};z)=\frac{(r-a)(r-a_{2})}{1+a_{1}}\!\phantom{l}_{2}F_{1}(r-a+1,r-a_{2}+1;2+a_{1};z).$$
\item The derivative of $\!\phantom{l}_{2}F_{1}(r-a,r-a_{2};1+a_{1};z)$ is well-defined and continuous on the domain $\bar D \setminus\{1\}$.
\item When $z\rightarrow 1$, there is a non zero constant $C$ such that  $$\!\phantom{l}_{2}F_{1}'(r-a,r-a_{2};1+a_{1};z) \sim C(1-z)^{s-2r}.$$
\end{enumerate}
\end{lemm}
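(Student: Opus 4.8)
The plan is to reduce all five claims to a single computation of the parameter combination $\gamma-\alpha-\beta$ for the hypergeometric function in question, together with the facts collected in Lemma~\ref{lem:Gauss} and the elementary identities for ${}_2F_1$. Throughout put $\alpha=r-a$, $\beta=r-a_2$, $\gamma=1+a_1$. The first step is the arithmetic: since $s=a+a_1+a_2$,
$$\gamma-\alpha-\beta=1+a_1+a+a_2-2r=1+s-2r,$$
and from $r=\tfrac{1+s}{2}-\sqrt{\mu+\left(\tfrac{1+s}{2}\right)^{2}}$ one gets $1+s-2r=2\sqrt{\mu+\left(\tfrac{1+s}{2}\right)^{2}}$ and $s-2r=-1+2\sqrt{\mu+\left(\tfrac{1+s}{2}\right)^{2}}$. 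Under the running hypothesis $0<\mu+\left(\tfrac{1+s}{2}\right)^{2}<\tfrac14$ both are real, with $1+s-2r\in(0,1)$ and $s-2r\in(-1,0)$; in particular neither is an integer. I would also record once and for all that $1+a_1\notin\Z_{\leq0}$ (already noted before the lemma), and that by hypothesis $r-a$ and $r-a_2$ are not non-positive integers, hence nonzero.

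For (i), since $\Re(\gamma-\alpha-\beta)=1+s-2r>0$ and all three parameters avoid $\Z_{\leq0}$, parts (1) and (2) of Lemma~\ref{lem:Gauss} give holomorphy on $D$ and continuity on $\bar D$. For (ii), I would invoke the classical Gauss summation formula ${}_2F_1(\alpha,\beta;\gamma;1)=\frac{\Gamma(\gamma)\Gamma(\gamma-\alpha-\beta)}{\Gamma(\gamma-\alpha)\Gamma(\gamma-\beta)}$, valid exactly when $\Re(\gamma-\alpha-\beta)>0$ and $\gamma\notin\Z_{\leq0}$ (see \cite{AAR99} or \cite{Ba35}), and substitute $\gamma-\alpha=1+a_1+a-r$ and $\gamma-\beta=1+a_1+a_2-r$. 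For (iii), I would differentiate the defining power series term by term inside $D$ and reindex via $(\alpha)_{n+1}=\alpha(\alpha+1)_n$ (and likewise for $\beta$ and $\gamma$); this is routine and gives ${}_2F_1'(\alpha,\beta;\gamma;z)=\frac{\alpha\beta}{\gamma}{}_2F_1(\alpha+1,\beta+1;\gamma+1;z)$, which is the asserted identity.

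Points (iv) and (v) both concern $G(z):={}_2F_1(r-a+1,r-a_2+1;2+a_1;z)$, which by (iii) equals $\frac{1+a_1}{(r-a)(r-a_2)}$ times the derivative. Its three parameters still avoid $\Z_{\leq0}$ (adding $1$ to something that is not a non-positive integer cannot produce one, and $a_1\notin\Z_{\leq-2}$ in all our cases), and now the relevant combination is $(2+a_1)-(r-a+1)-(r-a_2+1)=s-2r$, whose real part is \emph{negative}. Lemma~\ref{lem:Gauss}(3) then yields $G(z)\sim C'(1-z)^{s-2r}$ as $z\to1$ with $C'\neq0$, and multiplying by the nonzero constant $\frac{(r-a)(r-a_2)}{1+a_1}$ gives (v). For (iv) the continuity part of Lemma~\ref{lem:Gauss} no longer applies, so instead I would use that ${}_2F_1$ continues analytically to $\C\setminus[1,\infty)$---or, staying within the stated tools, the Kummer connection formula writing $G$ near $z=1$ as a combination of two hypergeometric functions in the variable $1-z$, with third parameters $1\mp(s-2r)\notin\Z_{\leq0}$, together with a factor $(1-z)^{s-2r}$; each ${}_2F_1$ is holomorphic for $|1-z|<1$ by Lemma~\ref{lem:Gauss}(1), and $(1-z)^{s-2r}$ is continuous on $\bar D\setminus\{1\}$ because $1-z$ stays in the open right half-plane there. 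Combined with holomorphy on $D$, this gives continuity of $G$, hence of ${}_2F_1'(r-a,r-a_2;1+a_1;\cdot)$, on $\bar D\setminus\{1\}$.

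The only genuinely delicate point is (iv): continuity of the derivative up to the boundary away from the singular point $z=1$ is not covered by the continuity statement in Lemma~\ref{lem:Gauss}, and one must appeal either to the global analytic continuation of ${}_2F_1$ off the cut $[1,\infty)$ or to the connection-formula argument sketched above. Everything else is bookkeeping with the single quantity $s-2r=-1+2\sqrt{\mu+\left(\tfrac{1+s}{2}\right)^{2}}\in(-1,0)$ and with the exclusion of non-positive integer parameters already guaranteed by the hypotheses.
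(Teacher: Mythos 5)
Your proposal is correct, and for items (i), (ii), (iii) and (v) it is essentially the paper's own proof: the paper likewise reduces everything to the single computation $1+a_1-(r-a)-(r-a_2)=1+s-2r=2\sqrt{\mu+\left(\frac{1+s}{2}\right)^2}\in(0,1)$ (hence $s-2r\in(-1,0)$) and then quotes the continuity statement, Gauss's summation theorem, the derivative identity, and the $z\to 1$ asymptotics from \cite{AAR99}, i.e.\ the facts packaged in lemma \ref{lem:Gauss}. The only divergence is (iv): the paper gets continuity of the derivative on $\bar D\setminus\{1\}$ by applying \cite[thm 2.1.2]{AAR99} to the differentiated series from (iii) --- that theorem also covers convergence on $|z|=1$, $z\neq 1$, when $-1<\Re(\gamma-\alpha-\beta)\leq 0$, which is exactly the case $s-2r\in(-1,0)$ --- whereas you invoke the analytic continuation of ${}_2F_1$ to $\C\setminus[1,\infty)$, or alternatively Kummer's connection formula at $z=1$. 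The analytic-continuation route is complete as stated, since $\bar D\setminus\{1\}\subset\C\setminus[1,\infty)$; but be aware that the connection-formula variant, as you sketch it, only yields continuity on the portion of $\bar D\setminus\{1\}$ with $|1-z|<1$, so boundary points such as $z=-1$ are not reached by it alone and require either the continuation argument or the boundary-convergence fact the paper uses. With that caveat, your bookkeeping (the shifted parameters $r-a+1$, $r-a_2+1$, $2+a_1$ avoiding $\Z_{\leq 0}$, and $(r-a)(r-a_2)\neq 0$) is exactly what the paper's citation-style proof leaves implicit.
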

\begin{proof}
Remark that $1+a_{1}-(r-a)-(r-a_{2})=1+s-2r=2\sqrt{\mu+\left(\frac{1+s}{2}\right)^2}$ and that $0<\mu+\left(\frac{1+s}{2}\right)^2<\frac{1}{4}$. Now $(i)$ is a consequence of \cite[thm 2.1.2]{AAR99}, $(ii)$ is Gauss theorem \cite[thm 2.2.2]{AAR99}, $(iii)$ is \cite[eq. (2.5.1)]{AAR99}, $(iv)$ is a consequence of $(iii)$ and \cite[thm 2.1.2]{AAR99}, and $(v)$ is a consequence of $(iii)$ and \cite[thm 2.1.3]{AAR99}.
\end{proof}

\begin{coro}\label{cor:hyper}
Assume that $r-a$ and $r-a_2$ are not non positive integers. If $1+a_{1}+a-r\not\in\ \Z_{\leq 0}$ and $1+a_{1}+a_{2}-r\not\in\ \Z_{\leq 0}$, then $\!\phantom{l}_{2}F_{1}(r-a,r-a_{2};1+a_{1};1)\not=0$.
\end{coro}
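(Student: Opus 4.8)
The plan is to read off the value of $\,{}_{2}F_{1}(r-a,r-a_{2};1+a_{1};1)$ from Lemma \ref{lem:hyper}(ii) and check that the resulting quotient of Gamma-values is a nonzero complex number. By that lemma,
$${}_{2}F_{1}(r-a,r-a_{2};1+a_{1};1)=\frac{\Gamma(1+a_{1})\Gamma(1+s-2r)}{\Gamma(1+a_{1}+a-r)\Gamma(1+a_{1}+a_{2}-r)},$$
so everything reduces to analysing the four Gamma-factors. Here I would use only the elementary facts that $\Gamma$ has no zeros on $\C$ and has poles exactly at the non-positive integers; equivalently, $1/\Gamma$ is entire and vanishes precisely on $\Z_{\leq 0}$.

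First I would dispose of the numerator. The factor $\Gamma(1+a_{1})$ is finite and nonzero because $1+a_{1}\not\in\Z_{\leq 0}$, as already observed when defining $S(z)$. For the factor $\Gamma(1+s-2r)$, recall that $1+s-2r=2\sqrt{\mu+\left(\frac{1+s}{2}\right)^2}$ and that we are in the regime $0<\mu+\left(\frac{1+s}{2}\right)^2<\frac14$; hence $1+s-2r$ is a real number in the open interval $(0,1)$, in particular not a non-positive integer, so $\Gamma(1+s-2r)$ is again finite and nonzero. Thus the numerator is a nonzero complex number.

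For the denominator, the hypotheses $1+a_{1}+a-r\not\in\Z_{\leq 0}$ and $1+a_{1}+a_{2}-r\not\in\Z_{\leq 0}$ say precisely that neither argument lands on a pole of $\Gamma$, so $\Gamma(1+a_{1}+a-r)$ and $\Gamma(1+a_{1}+a_{2}-r)$ are both finite; and since $\Gamma$ never vanishes, they are both nonzero. Therefore the displayed quotient is a ratio of a nonzero finite number by a nonzero finite number, hence nonzero, which is the claim. There is essentially no obstacle here: the only point requiring a moment's care is verifying that $1+s-2r\notin\Z_{\leq 0}$, which is immediate from the complementary-series inequality $0<\mu+\left(\frac{1+s}{2}\right)^2<\frac14$ forcing $1+s-2r\in(0,1)$.
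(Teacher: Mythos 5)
Your proof is correct and follows the route the paper intends: the corollary is read off from the Gauss value in Lemma \ref{lem:hyper}(ii), using that $\Gamma$ never vanishes, that the denominator arguments avoid the poles by hypothesis, and that $1+a_{1}\not\in\Z_{\leq 0}$ and $1+s-2r=2\sqrt{\mu+\left(\frac{1+s}{2}\right)^2}\in(0,1)$ keep the numerator finite and nonzero. Your explicit check of $1+s-2r\in(0,1)$ is exactly the small verification the paper leaves implicit.
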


\begin{lemm}\label{lem:hyperN}
Assume that $r-a$ and $r-a_2$ are not non positive integers. Assume also that $1+a_{1}+a-r\in\ \Z_{\leq 0}$ or $1+a_{1}+a_{2}-r\in\ \Z_{\leq 0}$.
 \begin{enumerate}
 \item We cannot have $1+a_{1}+a-r\in\ \Z_{\leq 0}$ and $1+a_{1}+a_{2}-r\in\ \Z_{\leq 0}$.
 \item If $1+a_{1}+a-r=-n$, we have $\!\phantom{l}_2F_1(1+a_1+n,r-a_2;1+a_1;z)=(1-z)^{a_2-r-n}P_n(z)$, where $P_n(z)$ is a polynomial of degree $n$. If $1+a_{1}+a_2-r=-n$, we have $\!\phantom{l}_2F_1(1+a_1+n,r-a;1+a_1;z)=(1-z)^{a-r-n}Q_n(z)$, where $Q_n(z)$ is a polynomial of degree $n$.
 \item With the notations as above, we have $P_0=1=Q_0$ and, for $n>0$, $P_n(1)=\frac{(r-a_2)(r-a_2+1)\cdots (r-a_2+n-1)}{(1+a_1)(2+a_1)\cdots (n+a_1)}\not=0$ and $Q_n(1)=\frac{(r-a)(r-a+1)\cdots (r-a+n-1)}{(1+a_1)(2+a_1)\cdots (n+a_1)}\not=0$.
 \item If $\Re(s)\geq -2$, then $1+a_{1}+a-r=-n$ (resp. $1+a_{1}+a_2-r=-n$) implies that $\Re(s)<-1$ and $n=0$, and therefore $\!\phantom{l}_2F_1(1+a_1,r-a_2;1+a_1;z)=(1-z)^{a_2-r}$ (resp.  $\!\phantom{l}_2F_1(1+a_1,r-a;1+a_1;z)=(1-z)^{a-r}$).
\end{enumerate}
\end{lemm}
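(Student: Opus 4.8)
The statement to prove is Lemma~\ref{lem:hyperN}, which refines the analysis of the hypergeometric function $\!\phantom{l}_2F_1(r-a,r-a_2;1+a_1;z)$ in the remaining degenerate case where $r-a$ and $r-a_2$ are \emph{not} non-positive integers, but one of the "lower" parameters $1+a_1+a-r$, $1+a_1+a_2-r$ is. My plan is to exploit the Euler transformation and the two established facts from Lemma~\ref{lem:hyperpol}, namely the constraints coming from the sign conditions $\Re(a)\le 0$, $\Re(a_2)\le 0$, $-1<a_1\le 0$ (or $a_1$ imaginary), and $0<\mu+\bigl(\frac{1+s}{2}\bigr)^2<\frac14$.

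\emph{Step 1 (part 1, the two cannot both occur).} Suppose $1+a_1+a-r=-n$ and $1+a_1+a_2-r=-m$ with $n,m\in\Z_{\ge0}$. Adding, $2+2a_1+a+a_2-2r=-n-m$, i.e. $2+2a_1+s-2r=-(n+m)$. But $1+s-2r=2\sqrt{\mu+(\frac{1+s}{2})^2}$, so the left side is $1+2a_1+2\sqrt{\mu+(\frac{1+s}{2})^2}$. As in the proof of Lemma~\ref{lem:hyperpol}(2), $\Re(1+a_1)\in(0,1]$ and $0<\sqrt{\mu+(\frac{1+s}{2})^2}<\frac12$, so the real part of $1+2a_1+2\sqrt{\cdots}$ lies in $(0, 3)$ — in any case it is strictly positive — while $-(n+m)\le 0$, a contradiction. (One checks the imaginary part forces $a_1$ real, then the real-part bound does it.)

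\emph{Step 2 (part 2, the factorization).} Apply Euler's transformation
$$\!\phantom{l}_2F_1(\alpha,\beta;\gamma;z)=(1-z)^{\gamma-\alpha-\beta}\,\!\phantom{l}_2F_1(\gamma-\alpha,\gamma-\beta;\gamma;z).$$
With $\alpha=1+a_1+n$, $\beta=r-a_2$, $\gamma=1+a_1$: here $\gamma-\alpha=-n\in\Z_{\le0}$, so the transformed $\!\phantom{l}_2F_1(-n, 1+a_1-r+a_2; 1+a_1; z)$ is a polynomial $P_n(z)$ of degree $\le n$ (degree exactly $n$ because $1+a_1-r+a_2 = 1+s-a-r = 2\sqrt{\mu+(\frac{1+s}{2})^2}-n\notin\{0,1,\dots,n-1\}$, as that quantity is not an integer), and $\gamma-\alpha-\beta = -n-(r-a_2)+(1+a_1) \cdot$... — recompute: $\gamma-\alpha-\beta = (1+a_1)-(1+a_1+n)-(r-a_2)=-n-r+a_2=a_2-r-n$, giving the stated exponent. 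The other case is symmetric in $a\leftrightarrow a_2$.

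\emph{Step 3 (part 3, value at $1$).} For a terminating series, $P_n(1)=\!\phantom{l}_2F_1(-n,1+a_1-r+a_2;1+a_1;1)$ is evaluated by the Chu--Vandermonde identity \cite[cor.~2.2.3]{AAR99}: $P_n(1)=\dfrac{(1+a_1-(1+a_1-r+a_2))_n}{(1+a_1)_n}=\dfrac{(r-a_2)_n}{(1+a_1)_n}$, which is the claimed product $\frac{(r-a_2)(r-a_2+1)\cdots(r-a_2+n-1)}{(1+a_1)(2+a_1)\cdots(n+a_1)}$. Non-vanishing: the denominator is nonzero since $1+a_1\notin\Z_{\le0}$; the numerator $(r-a_2)_n$ is nonzero because $r-a_2\notin\Z_{\le0}$ by hypothesis, and none of $r-a_2,\dots,r-a_2+n-1$ can vanish — if $r-a_2+j=0$ for some $0\le j\le n-1$ then $r-a_2\in\Z_{\le 0}$, contradiction. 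Again $Q_n$ is the symmetric case. For $n=0$ the series is $1$, so $P_0=Q_0=1$.

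\emph{Step 4 (part 4, the sign refinement).} Suppose $\Re(s)\ge-2$ and $1+a_1+a-r=-n$, $n\in\Z_{\ge0}$. Using $r=\frac{1+s}{2}-\sqrt{\mu+(\frac{1+s}{2})^2}$, we get $1+a_1+a-\frac{1+s}{2}+\sqrt{\mu+(\frac{1+s}{2})^2}=-n$. Since $a_1+a-\frac{1+s}{2}=a_1+a-\frac{1+a+a_1+a_2}{2}=\frac{a_1+a-a_2}{2}-\frac12$, one rearranges to a relation whose real part, using $\Re(a)\le0$, $\Re(a_2)\le0$, $-1<a_1\le 0$ (or $a_1\in i\R$), $0<\sqrt{\mu+(\frac{1+s}{2})^2}<\frac12$, and $\Re(s)\ge -2$, pins $n=0$ and simultaneously forces $\Re(s)<-1$ (the boundary case $\Re(s)\ge-1$ being excluded by the strict inequality $\sqrt{\cdots}<\frac12$). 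Then $P_0=1$ and the Euler transform reads $\!\phantom{l}_2F_1(1+a_1,r-a_2;1+a_1;z)=(1-z)^{a_2-r}$ directly (or just: $\!\phantom{l}_2F_1(c,b;c;z)=(1-z)^{-b}$), and symmetrically.

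\emph{Main obstacle.} The routine computations in Steps 1 and 4 — tracking real and imaginary parts through the substitution for $r$ and extracting the exact integrality/sign constraints from $0<\sqrt{\mu+(\frac{1+s}{2})^2}<\frac12$ together with $\Re(a),\Re(a_2)\le0$ and the location of $a_1$ — are where all the work is; everything else is a mechanical application of Euler's transformation and Chu--Vandermonde. The one genuinely delicate point is Step 4's simultaneous conclusion "$n=0$ \emph{and} $\Re(s)<-1$": one must use that $a_1$ can be imaginary only in the principal-series-type case (where indeed $\Re(s)<-1$), so the argument naturally splits according to whether $a_1\in i\R$ or $-1<a_1\le0$, and in the latter case the bound $\sqrt{\cdots}<\frac12$ is exactly tight enough to kill $n\ge1$ while leaving $n=0$ only when $\Re(s)<-1$.
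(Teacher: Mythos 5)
Your route is the same as the paper's: part (1) by adding the two integrality relations and using $1+s-2r=2\sqrt{\mu+\left(\frac{1+s}{2}\right)^2}$ together with the sign constraints, part (2) by Euler's transformation, part (3) by Chu--Vandermonde, part (4) by taking real parts in the relation defining $r$. But Step 1 as written does not go through: since $a+a_2=s-a_1$ (not $s$), adding the two relations gives $2+a_1+s-2r=-(n+m)$, i.e. $1+a_1+(n+m)+2\sqrt{\mu+\left(\frac{1+s}{2}\right)^2}=0$, not $1+2a_1+\cdots$; and for the expression you wrote the asserted positivity of the real part is not implied by the constraints ($\Re(1+2a_1)$ can be as low as $-1$ while $2\sqrt{\cdots}<1$, so the sum can be negative). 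With the corrected sum the argument closes exactly as in the paper: the imaginary part forces $a_1\in\R$, and then $1+a_1>0$, $n+m\ge0$, $\sqrt{\cdots}>0$ give a contradiction.

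In Step 2 your justification of ``degree exactly $n$'' quotes the identity $1+a_1+a_2-r=2\sqrt{\cdots}-n$, but that identity belongs to the case $r-a=-n$ of lemma \ref{lem:hyperpol}; under the present hypothesis $1+a_1+a-r=-n$ one has instead $1+a_1+a_2-r=1+a_1+n+2\sqrt{\cdots}$, which can perfectly well be an integer (a positive one). The degree claim still holds, but for a different reason: this quantity is never a non-positive integer (its real part exceeds $n$, since $\Re(1+a_1)>0$; equivalently, invoke part (1)), so $(1+a_1+a_2-r)_n\not=0$ and the terminating series $\!\phantom{l}_2F_1(-n,1+a_1+a_2-r;1+a_1;z)$ has nonzero top coefficient --- note also that the excluded set is $\{0,-1,\dots,-(n-1)\}$, not $\{0,1,\dots,n-1\}$. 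Your Step 3 is correct and is the paper's argument. Step 4, however, is only a sketch, and the deferred computation is the entire content of part (4): taking real parts of $\frac{1+s}{2}-a_{(2)}+\sqrt{\cdots}=-n$ gives $\sqrt{\cdots}=\Re(a_{(2)})-n-\frac{1+\Re(s)}{2}$; positivity of the square root together with $\Re(a_{(2)})<0$ and $\Re(s)\geq -2$ forces $n=0$, and then $\frac{1+\Re(s)}{2}<\Re(a_{(2)})<0$, i.e. $\Re(s)<-1$. No case split on $a_1$ is needed (the square root is real in all cases), and it is the strict positivity $\sqrt{\cdots}>0$, not the upper bound $\sqrt{\cdots}<\frac{1}{2}$, that rules out $n\geq 1$ and $\Re(s)\geq -1$. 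So the proposal is the paper's proof in outline, but Steps 1, 2 and 4 need the corrections above to be complete.
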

\begin{proof}
Assume that $1+a_{1}+a-r=-n$ and $1+a_{1}+a_{2}-r=-m$. Then we have $1+s+1+a_1-2r+n+m=0$, that is $1+a_1+n+m+2\sqrt{\mu+\left(\frac{1+s}{2}\right)^2}=0$. So we must have $1+a_1\in\ \R$ and $-1<1+a_1+n+m<0$. As $1+a_1>0$, this is not possible.

The second assertion of the lemma is Euler's theorem \cite[thm 2.2.5]{AAR99}. The value of $P_n$ at $z=1$ is Chu-Vandermonde's theorem \cite[cor 2.2.3]{AAR99}.

To prove the fourth part of the lemma, assume for instance that $1+a_1-r+a+n=0$. Therefore we have  $\frac{1+\Re(s)}{2}+n-\Re(a_2)+\sqrt{\mu+\left(\frac{s+1}{2}\right)^2}=0$. So we must have $0<-\frac{1+\Re(s)}{2}-n+\Re(a_2)<\frac{1}{2}$. As we have $\Re(a_2)< 0$ and $\Re(s)\geq -2$ by hypothesis, such an equality can only hold if $n=0$. Moreover we must have $\Re(s)>-1$. The proof when $1+a_1+a_2-r=-n$ is analogous.
\end{proof}

From now on, we shall set  $F(z)=\!\phantom{l}_2F_1(r-a,r-a_2;1+a_1;z)$. For future use, we shall compute some asymptotics.

\begin{lemm}\label{lem:asym}
\begin{enumerate}
 \item Let $d\in\ \R$. Then, when $te^{i\theta}\rightarrow 1$, there is a non zero constant $C$ such that $$\left(1-te^{i\theta}\right)^d \sim C \left|1-te^{i\theta}\right|^d.$$
 \item Assume that $r-a$ and $r-a_2$ are not non positive integers. Assume also that $F(1)\not=0$. Then, when $te^{i\theta}\rightarrow 1$, there is a non zero constant $C$ such that 
$$F'\left(te^{i\theta}\right) \sim C\left|1-te^{i\theta}\right|^{s-2r}.$$
\item Let $d\in\ \C$. Then there is a non zero constant such that 
$$|(1-z)^d|^2\sim C|1-z|^{2\Re(d)},\ \mbox{when } z\rightarrow 1.$$
\end{enumerate}
\end{lemm}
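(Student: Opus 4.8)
The three statements are all instances of one elementary fact about the principal branch of the logarithm, so I would prove them together. The key geometric remark is that for $z$ in the open unit disc $D$ the point $1-z$ lies in the disc of radius $1$ centred at $1$, hence in the open right half plane; writing $z=te^{i\theta}$ this is simply the statement $\Re(1-te^{i\theta})=1-t\cos\theta\geq 1-t>0$. Consequently $\arg(1-te^{i\theta})\in(-\tfrac{\pi}{2},\tfrac{\pi}{2})$, and along the approaches $te^{i\theta}\to 1$ that occur in the sequel (for instance $\theta$ fixed and $t\uparrow 1$) the quantity $\psi:=\arg(1-te^{i\theta})$ stays in a compact subinterval of $(-\tfrac{\pi}{2},\tfrac{\pi}{2})$ and converges, say to $\psi_{0}$.

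For (1) I would write $(1-z)^{d}=\exp\!\big(d\,\mathrm{Log}(1-z)\big)=|1-z|^{d}\,e^{id\psi}$, using that $d$ is real; the factor $e^{id\psi}$ has modulus $1$ and tends to the non-zero constant $C=e^{id\psi_{0}}$, which gives the claim. For (3), writing $d=\Re(d)+i\,\Im(d)$, the same computation gives $|(1-z)^{d}|=|1-z|^{\Re(d)}\,e^{-\Im(d)\,\psi}$, whence $|(1-z)^{d}|^{2}=|1-z|^{2\Re(d)}\,e^{-2\Im(d)\,\psi}$; the last factor is bounded between two positive constants and converges to $C=e^{-2\Im(d)\psi_{0}}\neq 0$. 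Both arguments only use that $\psi$ stays bounded, so if one reads $f\sim Cg$ merely as $f/g$ being bounded above and below by positive constants — which is all that is needed in the Hilbert space convergence estimates later — no hypothesis on the mode of approach is required.

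For (2) I would combine this with the hypergeometric asymptotics already established. Since $r-a$ and $r-a_{2}$ are not non-positive integers, Lemma~\ref{lem:hyper}\,(v) gives a non-zero constant $C_{1}$ with $F'(z)\sim C_{1}(1-z)^{s-2r}$ as $z\to 1$ in $D$; the assumption $F(1)\neq 0$ is what keeps us in the generic situation of that lemma rather than in the polynomial degenerations of Lemmas~\ref{lem:hyperpol} and \ref{lem:hyperN}. Now from $r=\tfrac{1+s}{2}-\sqrt{\mu+\big(\tfrac{1+s}{2}\big)^{2}}$ one gets $s-2r=-1+2\sqrt{\mu+\big(\tfrac{1+s}{2}\big)^{2}}$, which is \emph{real} and lies in $(-1,0)$ because $0<\mu+\big(\tfrac{1+s}{2}\big)^{2}<\tfrac14$. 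Applying part (1) with the real exponent $d=s-2r$ then replaces $(1-te^{i\theta})^{s-2r}$ by a non-zero constant times $|1-te^{i\theta}|^{s-2r}$, which is exactly the assertion.

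The computations are all one-liners; the only point requiring a moment's care — and the nearest thing to an obstacle — is that $\arg(1-te^{i\theta})$ can fail to converge along a \emph{tangential} approach to $1$ (it can be pushed toward $\pm\tfrac{\pi}{2}$), so one must either restrict to the non-tangential or fixed-$\theta$ approaches used later or, as noted above, content oneself with two-sided bounds, which suffice for every subsequent application.
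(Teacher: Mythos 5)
Your proof is correct and essentially the same as the paper's: the paper establishes (1) and (3) from the same polar-form identity $(1-z)^d=|1-z|^{\Re(d)}\exp\left(i\Im(d)\ln|1-z|+id\arg(1-z)\right)$ (writing $\arg(1-z)$ via the half-angle arctan formula), and deduces (2) from part (1) together with Lemma \ref{lem:hyper}(v) and the observation that $s-2r\in\R$, exactly as you do. Your caveat about tangential approaches (or reading $\sim$ as two-sided bounds) is a point the paper passes over silently, and it is consistent with how the lemma is actually used in the subsequent integral estimates.
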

\begin{proof}
 The first part of the lemma follows from the equality $$\left(1-te^{i\theta}\right)^d=\left|1-te^{i\theta}\right|^d\times \exp{\left(2id\arctan \left(\frac{-t\sin \theta}{1-t\cos \theta+\sqrt{1+t^2-2t\cos\theta}}\right)\right)}.$$ The second assertion follows from the first part together with lemma \ref{lem:hyper} (note that $s-2r\in\ \R$). The last part of the lemma follows from the equality 
 $$(1-z)^d=|1-z|^{\Re(d)}\times \exp \left( i\Im(d)\ln |1-z|+2id\arctan\left(\frac{-\Im(z)}{1-\Re(z)+|1-z|}\right)\right).$$
\end{proof}

Let $d^*\theta$ denote the measure on $[0,2\pi]$ such that $\int_{0}^{2\pi}\ d^*\theta=1$. 

\begin{lemm}\label{lem:theta-int}
 Let $a\in\ D$. Then we have 
 $$\int_0^{2\pi}\ \left(1+a^2-2a\cos (\theta)\right)^{\nu}d^*\theta = \!\phantom{l}_2F_1\left(-\nu,-\nu;1;a^2\right).$$
\end{lemm}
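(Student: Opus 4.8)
The plan is to recognise the integrand as the product of two conjugate factors and to expand each one by the binomial series. Writing
\[
1+a^2-2a\cos\theta=(1-ae^{i\theta})(1-ae^{-i\theta}),
\]
which for real $a$ is the positive real number $|1-ae^{i\theta}|^2$ (so that $(1+a^2-2a\cos\theta)^{\nu}$ is unambiguous), and which in general we read as the definition of the base of the power via the principal branch of $w\mapsto w^{\nu}$ — legitimate since $\Re(1-ae^{\pm i\theta})>0$ for $a\in D$ — we have, for $|a|<1$, the absolutely convergent expansions
\[
(1-ae^{i\theta})^{\nu}=\sum_{m\geq 0}\frac{(-\nu)_m}{m!}\,a^m e^{im\theta},\qquad (1-ae^{-i\theta})^{\nu}=\sum_{n\geq 0}\frac{(-\nu)_n}{n!}\,a^n e^{-in\theta},
\]
using that the binomial series $(1-w)^{\nu}=\sum_{m\geq 0}\frac{(-\nu)_m}{m!}w^m$ has radius of convergence at least $1$ and that $|ae^{\pm i\theta}|=|a|<1$.

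Next I would multiply the two series and integrate term by term against $d^*\theta$. The interchange of the integral with the double summation is justified because
\[
\sum_{m,n\geq 0}\frac{|(-\nu)_m|\,|(-\nu)_n|}{m!\,n!}\,|a|^{m+n}=\left(\sum_{m\geq 0}\frac{|(-\nu)_m|}{m!}\,|a|^m\right)^{2}<\infty
\]
for $|a|<1$, once more by the radius of convergence of the binomial series. Using the orthogonality relation $\int_0^{2\pi}e^{i(m-n)\theta}\,d^*\theta=\delta_{m,n}$, only the diagonal terms $m=n$ survive, and one obtains
\[
\int_0^{2\pi}\left(1+a^2-2a\cos\theta\right)^{\nu}d^*\theta=\sum_{m\geq 0}\frac{(-\nu)_m^{2}}{(m!)^{2}}\,a^{2m}.
\]

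Finally I would identify the right-hand side with the hypergeometric series: by definition
\[
\!\phantom{l}_2F_1(-\nu,-\nu;1;a^2)=\sum_{m\geq 0}\frac{(-\nu)_m(-\nu)_m}{(1)_m}\frac{(a^2)^m}{m!},
\]
and since $(1)_m=m!$ this is exactly $\sum_{m\geq 0}\frac{(-\nu)_m^2}{(m!)^2}a^{2m}$, the expression just obtained. This proves the identity. There is no real analytic obstacle here; the only points requiring a little care are the justification of the term-by-term integration, handled by the absolute convergence displayed above, and — when $a$ is not real — fixing once and for all the principal branch of $w\mapsto w^{\nu}$ so that the factorisation of the base is consistent with the power.
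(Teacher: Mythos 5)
Your proof is correct, but it takes a different route from the paper: the paper does not prove the identity at all, it simply cites it as equation (3.665(2)), p.~427 of Gradshteyn--Ryzhik \cite{GR94}, whereas you give a self-contained elementary derivation by factoring $1+a^2-2a\cos\theta=(1-ae^{i\theta})(1-ae^{-i\theta})$, expanding each factor by the binomial series $(1-w)^{\nu}=\sum_{m\ge 0}\frac{(-\nu)_m}{m!}w^m$, and using the orthogonality $\int_0^{2\pi}e^{i(m-n)\theta}\,d^*\theta=\delta_{m,n}$; the diagonal terms then reassemble into $\sum_{m\ge 0}\frac{(-\nu)_m^2}{(m!)^2}a^{2m}={}_2F_1(-\nu,-\nu;1;a^2)$. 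The absolute-convergence justification for the term-by-term integration is sound, since $\left|(-\nu)_m\right|/m!$ grows only polynomially, so the double series converges absolutely for $|a|<1$. What your argument buys is transparency and independence from a table of integrals, plus an explicit treatment of what the left-hand side even means for non-real $a\in D$; what the paper's citation buys is brevity, and in fact the paper only ever invokes the lemma with $a=t$ or $a=\rho$ real in $(0,1)$, where the base is a positive real number and no branch question arises. The one point you could make fully explicit is the branch compatibility you allude to at the end: since $\Re\bigl(1-ae^{\pm i\theta}\bigr)\geq 1-|a|>0$, each factor has argument in $\left(-\frac{\pi}{2},\frac{\pi}{2}\right)$, so the product of the principal-branch powers of the factors coincides with the principal-branch power of the product, which makes your definition of $\left(1+a^2-2a\cos\theta\right)^{\nu}$ consistent and the factorised expansion legitimate.
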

\begin{proof}
 This is equation $(3.665(2))$ p.427 of \cite{GR94}.
\end{proof}

\begin{coro}\label{cor:theta-int}
 Let $\nu$ be such that $\nu\not\in\ \Z_{\leq 0}$ and $\Re(1+2\nu)<0$. When $t\rightarrow 1$, there is a non zero constant $C$ such that  
 $$\int_0^{2\pi}\ \left|1-te^{i\theta}\right|^{2\nu}d^*\theta \sim C\left(1-t^2\right)^{1+2\nu}.$$
\end{coro}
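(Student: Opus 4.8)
The plan is to prove Corollary~\ref{cor:theta-int} by combining Lemma~\ref{lem:theta-int} with the asymptotic behaviour of the hypergeometric function near its singularity, as recorded in Lemma~\ref{lem:Gauss}. First I would apply Lemma~\ref{lem:theta-int} with $a=t$ and $\nu$ in place of the exponent (so that $a^2=t^2$), writing
$$\int_0^{2\pi}\ \left|1-te^{i\theta}\right|^{2\nu}d^*\theta = \!\phantom{l}_2F_1\left(-\nu,-\nu;1;t^2\right),$$
using that $|1-te^{i\theta}|^2 = 1+t^2-2t\cos\theta$. This reduces the problem to understanding $\!\phantom{l}_2F_1(-\nu,-\nu;1;u)$ as $u=t^2\rightarrow 1$.

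Next I would invoke Lemma~\ref{lem:Gauss}(3). Here $\alpha=\beta=-\nu$ and $\gamma=1$, so $\gamma-\alpha-\beta = 1+2\nu$, and $\Re(1+2\nu)<0$ by hypothesis. The hypotheses $\alpha,\beta,\gamma\not\in\Z_{\leq 0}$ needed to apply Gauss's lemma hold: $\gamma=1$, and $-\nu\not\in\Z_{\leq 0}$ is exactly the assumption $\nu\not\in\Z_{\geq 0}$ — here I should be slightly careful, since the corollary writes $\nu\not\in\Z_{\leq 0}$, so I would want to check that the intended reading makes $-\nu$ avoid the non-positive integers, or note that the case $\nu\in\Z_{>0}$ forces $\Re(1+2\nu)>0$ anyway and is vacuous. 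Lemma~\ref{lem:Gauss}(3) then gives a nonzero constant $C'$ with
$$\!\phantom{l}_2F_1\left(-\nu,-\nu;1;u\right)\sim C'(1-u)^{1+2\nu},\quad u\rightarrow 1.$$
Substituting $u=t^2$ yields $\int_0^{2\pi}|1-te^{i\theta}|^{2\nu}d^*\theta\sim C'(1-t^2)^{1+2\nu}$ as $t\rightarrow 1$, which is the claim with $C=C'$.

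The only genuinely delicate point is the bookkeeping on the parameters: making sure the integral representation of Lemma~\ref{lem:theta-int} is applied with the correct arguments (that $a$ there becomes $t$, and the resulting hypergeometric variable is $t^2$, not $t$), and that the exclusion of integer values of $\nu$ lines up with the hypothesis $\alpha=\beta=-\nu\not\in\Z_{\leq 0}$ of Lemma~\ref{lem:Gauss}. Everything else is a direct substitution. I expect no asymptotic analysis beyond citing Lemma~\ref{lem:Gauss}(3); in particular there is no need here for the logarithmic case (4) since we are told $\Re(1+2\nu)<0$ strictly, nor for the continuity case (2). One should also note for completeness that $1-t^2 = (1-t)(1+t)$, so near $t=1$ this is comparable to $1-t$ up to a constant, should a later application prefer that form — but as stated the corollary is already in final form.
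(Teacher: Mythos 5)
Your proof is correct and follows essentially the same route as the paper: rewrite $\left|1-te^{i\theta}\right|^{2\nu}$ as $\left(1+t^2-2t\cos\theta\right)^{\nu}$, apply Lemma \ref{lem:theta-int} to get $\!\phantom{l}_2F_1(-\nu,-\nu;1;t^2)$, and conclude by Lemma \ref{lem:Gauss}(3) with $\gamma-\alpha-\beta=1+2\nu$. Your side remark on the parameter hypotheses is also sound, since $\Re(1+2\nu)<0$ already rules out $-\nu\in\Z_{\leq 0}$.
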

\begin{proof}
 Remark that $\left|1-te^{i\theta}\right|^{2\nu}=\left(1+a^2-2a\cos (\theta)\right)^{\nu}$. Then apply lemma \ref{lem:theta-int} and lemma \ref{lem:Gauss}.
\end{proof}

\begin{lemm}\label{lem:asymS}
Assume that $r-a\not\in\ \Z_{\leq 0}$, $r-a_{2}\not\in\ \Z_{\leq 0}$, $1+a_{1}+a-r\not\in\ \Z_{\leq 0}$, and $1+a_{1}+a_{2}-r\not\in\ \Z_{\leq 0}$. Then 
 \begin{enumerate}
 \item We have $$S'(z)=(1-z)^rF'(z)+(-r)(1-z)^{r-1}F(z).$$
 \item We have $$S(z)\overline{S'(z)z}=\left|(1-z)^r\right|^{2}F(z)\overline{F'(z)z} +(-\bar{r})\left|(1-z)^{r-1}\right|^{2}|F(z)|^2(\bar{z}-|z|^2).$$ When $z\rightarrow 1$, there are non zero constants $C$ and $C'$ such that  
 $$\left|(1-z)^r\right|^{2}F(z)\overline{F'(z)z} \sim C|1-z|^{\Re(s)}$$ and 
 $$\left|(1-z)^{r-1}\right|^{2}|F(z)|^2\bar{z}(1-z) \sim C'|1-z|^{2\left(\Re(r)-\frac{1}{2}\right)}.$$ Furthermore, if $r\not=0$, then there is a non zero constant $C''$ such that 
 $$S(z)\overline{S'(z)z} \sim C''|1-z|^{2\left(\Re(r)-\frac{1}{2}\right)}.$$
 \item We have \begin{multline*}
 |S'(z)|^2=\left|(1-z)^r\right|^{2}|F'(z)|^2+|r|^2\left|(1-z)^{r-1}\right|^{2}|F(z)|^2\\
 +2\left|(1-z)^{r-1}\right|^{2}\left((-r)(1-z)F(z)\overline{F'(z)} +(-\bar{r})(1-\bar{z})F'(z)\overline{F(z)}\right).
 \end{multline*}
 When $z\rightarrow 1$, there are non zero constants $C$, $C'$, and $C''$ such that  
$$\left|(1-z)^r\right|^{2}|F'(z)|^2 \sim C|1-z|^{2(\Re(s)-\Re(r))},$$
$$\left|(1-z)^{r-1}\right|^{2}|F(z)|^2 \sim C'|1-z|^{2(\Re(r)-1)},$$
$$\left|(1-z)^{r-1}\right|^{2}(1-z)F(z)\overline{F'(z)} \sim C''|1-z|^{\Re(s)-1}.$$
Furthermore, if $r\not=0$, then there is a non zero constant $C'''$ such that 
$$ |S'(z)|^2 \sim C'''|1-z|^{2(\Re(r)-1)}.$$
\end{enumerate}
\end{lemm}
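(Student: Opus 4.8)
The plan is to reduce everything to the product rule together with the asymptotic estimates already assembled in Lemmas~\ref{lem:Gauss}, \ref{lem:hyper} and~\ref{lem:asym} and in Corollary~\ref{cor:hyper}. Note first that the four hypotheses of the present lemma are exactly those needed to invoke Lemma~\ref{lem:hyper} and Corollary~\ref{cor:hyper}; hence $F$ is continuous on $\bar D$ with $F(1)\neq0$, and $F'(z)\sim C(1-z)^{s-2r}$ as $z\to1$, where $s-2r=2\sqrt{\mu+(\tfrac{1+s}{2})^{2}}-1$ is a \emph{real} number by Lemma~\ref{lem:hyper}(v) and the standing assumption $0<\mu+(\tfrac{1+s}{2})^{2}<\tfrac14$; the same assumption forces $\Im(s)=2\Im(r)$.

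Part (1) is the product rule applied to $S(z)=(1-z)^{r}F(z)$, using $\tfrac{d}{dz}(1-z)^{r}=-r(1-z)^{r-1}$. For parts (2) and (3) I would substitute this expression for $S'(z)$ into $S(z)\overline{S'(z)z}$, respectively into $|S'(z)|^{2}=S'(z)\overline{S'(z)}$, and expand; the stated algebraic identities then drop out after using the elementary relations $(1-z)^{r}\,\overline{(1-z)^{r-1}}=(1-z)\,|(1-z)^{r-1}|^{2}$, $\overline{(1-z)^{r}}\,(1-z)^{r-1}=(1-\bar z)\,|(1-z)^{r-1}|^{2}$ and $\bar z(1-z)=\bar z-|z|^{2}$ (in part (3) the two cross terms are complex conjugates of one another). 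The asymptotics are then read off term by term: Lemma~\ref{lem:asym} gives $|(1-z)^{d}|^{2}\sim C|1-z|^{2\Re(d)}$ and $|F'(z)|^{2}\sim C|1-z|^{2(s-2r)}$, while $F(z)\to F(1)\neq0$. Multiplying these and collapsing the resulting complex exponents to real ones by means of $\Im(s)=2\Im(r)$ — so that, for instance, $2\Re(r)+s-2r=s-i\,\Im(s)=\Re(s)$ — yields the exponents $\Re(s)$ and $2(\Re(r)-\tfrac12)$ for the two summands in (2), and $2(\Re(s)-\Re(r))$, $2(\Re(r)-1)$ and $\Re(s)-1$ for the three summands in (3).

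The only step that goes beyond bookkeeping, and the one I expect to be the (mild) obstacle, is the comparison of exponents needed for the final equivalence in each of (2) and (3). Substituting the explicit value $r=\tfrac{1+s}{2}-\sqrt{\mu+(\tfrac{1+s}{2})^{2}}$ gives $2\Re(r)-1=\Re(s)-2\sqrt{\mu+(\tfrac{1+s}{2})^{2}}$; since under $0<\mu+(\tfrac{1+s}{2})^{2}<\tfrac14$ the square root is real and \emph{strictly positive}, one gets $2(\Re(r)-\tfrac12)<\Re(s)$, so the second summand in (2) strictly dominates as $z\to1$; likewise $2(\Re(r)-1)$ is strictly smaller than both $2(\Re(s)-\Re(r))$ and $\Re(s)-1$, so the term $|r|^{2}|(1-z)^{r-1}|^{2}|F(z)|^{2}$ strictly dominates in (3). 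This domination is effective precisely when the relevant leading coefficient — $-\bar r$ in (2), $|r|^{2}$ in (3) — does not vanish, which is exactly the hypothesis $r\neq0$; verifying that the square root is strictly, and not merely weakly, positive is the one point one must be careful about.
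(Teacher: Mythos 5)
Your proposal is correct and follows essentially the same route as the paper, whose proof simply observes that the identities follow from the product rule and that the equivalents follow from $F(1)\neq 0$ (Corollary \ref{cor:hyper}) together with Lemma \ref{lem:asym}. Your additional bookkeeping — that $s-2r=2\sqrt{\mu+\left(\frac{1+s}{2}\right)^2}-1$ is real and the square root strictly positive, so the term carrying the factor $-\bar r$ (resp. $|r|^2$) has the strictly smaller exponent and dominates when $r\neq 0$ — is exactly the comparison the paper leaves implicit for the final equivalences in (2) and (3).
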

\begin{proof}
The equalities are clear. The equivalents are consequences of the fact that $F(1)\not=0$ by corollary \ref{cor:hyper} and of those equivalents in lemma \ref{lem:asym}.
\end{proof}

Now, we need to transform our infinitesimal condition $\displaystyle\sum_{n\geq 0}\ |u_n|^2(n+1)^{2+\Re(s)}<\infty$ into an equivalent condition satisfied by the function $S(z)=(1-z)^rF(z)$. We denote by $\cal O (D)$ the set of holomorphic functions of the unit disc. Let $dvol(z)$ denote the measure on $D$ such that $\int_{D}\ dvol(z)=1$. First, remark that $\Re(s)<0$. We will distinguish several cases.

$\blacktriangleright$ Assume that $\Re(s)<-2$ and consider the following space:
$$\cal H _s:=\left\{f\in\ \cal O (D)\ :\ \displaystyle\int_D\ |f(z)|^2\left(1-|z|^2\right)^{-3-\Re(s)}dvol(z)<\infty \right\}.$$
Then it is clear that for all non negative $n$ the function $f_n(z)=z^n$ belongs to the Hilbert space $\cal H _s$. Moreover we have $\langle f_n,\ f_m\rangle =0$ if $n\not=m$, and $\langle f_n,\ f_n\rangle \sim (n+1)^{2+\Re(s)}$ for large $n$. Therefore, $\langle S,\ S\rangle =\displaystyle\sum_{n\geq 0}\ |u_n|^2\langle f_n,\ f_n\rangle .$ Hence $u_n \in\ V$ if and only if $S \in\ \cal H _s.$

$\blacktriangleright$ Now assume that $\Re(s)=-2$. Then the infinitesimal conndition is $\displaystyle\sum_{n\geq 0}\ |u_n|^2<\infty$. Therefore we need to consider the Hilbert space:
$$\cal H _s:=\left\{f\in \cal O (D)\ :\ \lim_{\rho\rightarrow 1}\ \int_0^{2\pi}\ \left|f(\rho e^{i\theta})\right|^2d^*\theta <\infty\right\}.$$
It is well-known that the functions $f_n(z)=z^n$ give an orthonormal basis of $\cal H _s$. Hence $u_n \in\ V$ if and only if $S\in\ \cal H _s$.

$\blacktriangleright$ Assume now that $-2<\Re(s)<-1$ and consider the Hilbert space:
$$
 \cal H _s:=\left\{f\in\ \cal O (D)\ :\ \int_D\ \left(f(z)\overline{f'(z)z}\right)\left(1-|z|^2\right)^{-2-\Re(s)}dvol(z) + |f(0)|^2 <\infty \right\}
$$
It is easily check that the functions $f_n(z)=z^n$ belong to $\cal H _s$, are mutually orthogonal and satisfy $\|f_{n}\|_s^2 \sim (n+1)^{2+\Re(s)}$ for large $n$. Thus, the sequence $u_n$ belongs to $V$ if and only if $S\in\ \cal H _s$.

$\blacktriangleright$ Assume now that $\Re(s)=-1$, in which case $s\in\ \R$, and consider the Hilbert space:
$$
 \cal H _s:=\left\{f\in\ \cal O (D)\ :\ \lim_{\rho\rightarrow 1}\ \int_0^{2\pi}\ \left(f(\rho e^{i\theta})\overline{f'(\rho e^{i\theta})e^{i\theta}}\right)d^*\theta +|f(0)|^2<\infty \right\}
$$
It is easily check that the functions $f_n(z)=z^n$ belong to $\cal H _s$, are mutually orthogonal and satisfy $\|f_{n}\|_s^2 = n$ for large $n$. Thus, the sequence $u_n$ belongs to $V$ if and only if $S\in\ \cal H _s$.

$\blacktriangleright$ Assume now that $-1<\Re(s)<0$, in which case $s\in\ \R$, and consider the Hilbert space:
$$
 \cal H _s:=\left\{f\in\ \cal O (D)\ :\ \int_D\ |f'(z)|^2\left(1-|z|^2\right)^{-1-\Re(s)}dvol(z) + |f(0)|^2<\infty \right\}$$

It is easily check that the functions $f_n(z)=z^n$ belong to $\cal H _s$, are mutually orthogonal and satisfy $\|f_{n}\|_s^2 \sim (n+1)^{2+\Re(s)}$ for large $n$. Thus, the sequence $u_n$ belongs to $V$ if and only if $S\in\ \cal H _s$.

Now we are in position to prove the following three propositions:

\begin{prop}
Assume that $r-a\in\ \Z_{\leq 0}$ or $r-a_2\in\ \Z_{\leq 0}$. Then the function $S(z)$ does not belong to $\cal H _s$.
\end{prop}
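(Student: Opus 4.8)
The plan is to bypass the five integral (Bergman-type and Hardy-type) descriptions of $\cal H _s$ and to argue instead on the Taylor coefficients of $S$. Recall from the discussion immediately preceding the statement that, whatever the value of $\Re(s)$, the sequence $(u_n)$ of Taylor coefficients of $S$ lies in $V$, equivalently $S\in\cal H _s$, if and only if $\displaystyle\sum_{n\geq 0}|u_n|^2(n+1)^{2+\Re(s)}<\infty$. So it is enough to find the asymptotics of $u_n$ and to see that this series diverges.

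First I would apply Lemma \ref{lem:hyperpol}: under the hypothesis that $r-a$ or $r-a_2$ is a non positive integer, parts (1) and (3) of that lemma give that $F(z)$ is a polynomial $P(z)$ with $P(1)=F(1)\neq 0$, so that $S(z)=(1-z)^rP(z)$. I also record that $r$ cannot be a non negative integer, so that $(1-z)^r$ is a genuine, non-terminating binomial series and $\Gamma(-r)$ is finite and non-zero: if $r\in\Z_{\geq 0}$, then $r-a\in\Z_{\leq 0}$ would force $a\in\Z_{\geq 0}$, contradicting $a<0$, while $r-a_2\in\Z_{\leq 0}$ would force $a_2\in\Z_{\geq 0}$, which is impossible since $\Re(a_2)<0$ in every case under consideration.

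Next I would compute the coefficients. From $(1-z)^r=\displaystyle\sum_{m\geq 0}\frac{(-r)_m}{m!}z^m$ and $\frac{(-r)_m}{m!}=\frac{\Gamma(m-r)}{\Gamma(-r)\,m!}\sim\frac{m^{-r-1}}{\Gamma(-r)}$ (Stirling), a convolution with the finitely many coefficients of $P$ yields $u_n\sim\frac{P(1)}{\Gamma(-r)}\,n^{-r-1}$, whence $|u_n|^2\sim C\,n^{-2\Re(r)-2}$ for some $C>0$. Thus $\sum_n|u_n|^2(n+1)^{2+\Re(s)}$ has the same nature as $\sum_n n^{\Re(s)-2\Re(r)}$. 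Finally, since in the complementary series regime $\mu+\left(\frac{1+s}{2}\right)^2$ is a real number in $\left(0,\frac{1}{4}\right)$, we have $\Re(r)=\frac{1+\Re(s)}{2}-\sqrt{\mu+\left(\frac{1+s}{2}\right)^2}$, so that
\[
\Re(s)-2\Re(r)=-1+2\sqrt{\mu+\left(\frac{1+s}{2}\right)^2}\in(-1,0);
\]
hence $\sum_n n^{\Re(s)-2\Re(r)}$ diverges, so $\sum_n|u_n|^2(n+1)^{2+\Re(s)}=\infty$, that is $S\notin\cal H _s$.

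The only step that is not routine is the coefficient asymptotics: one must check that convolving the binomial series of $(1-z)^r$ with the polynomial $P$ does not create cancellation that speeds up the decay of $u_n$, and this is exactly where the non-vanishing of $F(1)$ from Lemma \ref{lem:hyperpol} is used. One could alternatively argue directly from the integral or Hardy-space descriptions, using that $|S(z)|^2\sim|P(1)|^2|1-z|^{2\Re(r)}$ as $z\to 1$ with $2\Re(r)<\Re(s)+1$, so that the contribution of a neighborhood of $z=1$ to the relevant norm diverges; but that route forces one to handle the five ranges of $\Re(s)$ one at a time.
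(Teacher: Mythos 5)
Your proof is correct, but it follows a genuinely different route from the paper's. The paper proves this proposition inside the function-space framework it has just set up: it splits into the five ranges of $\Re(s)$, uses the boundary behaviour of $S(z)=(1-z)^rF(z)$ near $z=1$ (its Lemma \ref{lem:asym} and Corollary \ref{cor:theta-int} on the angular integrals $\int_0^{2\pi}|1-te^{i\theta}|^{2\nu}d^*\theta$) to reduce membership in $\mathcal{H}_s$ to the radial integral $\int_0^1(1-t^2)^{2\Re(r)-2-\Re(s)}t\,dt$, and concludes from $2\Re(r)-2-\Re(s)=-1-2\sqrt{\mu+\left(\frac{1+s}{2}\right)^2}<-1$; the case $r-a_{(2)}=-n$ with $n>0$ is then reduced to the case $F=1$ using $F(1)\neq 0$. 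You instead step back to the coefficient formulation $\sum_n|u_n|^2(n+1)^{2+\Re(s)}<\infty$, which is legitimate since the paper introduces each $\mathcal{H}_s$ exactly so that this condition is equivalent to $S\in\mathcal{H}_s$, and you exploit the fact that the hypothesis makes $F$ a polynomial $P$ with $P(1)\neq 0$ (Lemma \ref{lem:hyperpol}) and rules out $r\in\Z_{\geq 0}$, so the binomial series of $(1-z)^r$ gives $u_n\sim\frac{P(1)}{\Gamma(-r)}n^{-r-1}$ and the series diverges because $\Re(s)-2\Re(r)=-1+2\sqrt{\mu+\left(\frac{1+s}{2}\right)^2}>-1$ --- the mirror image of the paper's divergent exponent. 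Your check that the convolution with $P$ produces no cancellation (via $P(1)\neq 0$) and that $\Gamma(-r)$ is finite and nonzero is exactly the point that needed care, and it is handled correctly. What your argument buys is the elimination of the five-case analysis and of the angular-integral estimates; what the paper's buys is uniformity with the two subsequent propositions, where $F$ is a genuine non-terminating hypergeometric series whose Taylor coefficients are not as immediately accessible, so the boundary-behaviour machinery has to be built anyway.
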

\begin{proof}
$\blacktriangleright$ Assume first that $r=a$ or $r=a_2$ (in particular $r\not=0$). Then $F(z)=1$ and therefore $S(z)=(1-z)^{r}$.
 \begin{enumerate}
 \item If $\Re(s)<-2$, then $S(z)\in\ \cal H _s$ if and only if 
 $$\int_D\ \left|(1-z)^r\right|^2\left(1-|z|^2\right)^{-3-\Re(s)}dvol(z)<\infty.$$ This function is integrable if and only if it is integrable near $z=1$. Using lemma \ref{lem:asym}, this is equivalent to the following condition:
 $$\int_{0}^1\ \left(\int_{0}^{2\pi}\ \left|1-te^{i\theta}\right|^{2\Re(r)}d^*\theta\right)\left(1-t^2\right)^{-3-\Re(s)}tdt<\infty.$$
Now, remark that $\Re(-r)>0$ and $\Re(1+2r)<0$. So, from corollary \ref{cor:theta-int}, we know that there is a non zero constant $C$ such that 
 $$\int_0^{2\pi}\ \left|1-te^{i\theta}\right|^{2\Re(r)}d^*\theta \sim C\left(1-t^2\right)^{1+2\Re(r)}.$$ So our condition becomes 
 $$\int_0^1\ \left(1-t^2\right)^{2\Re(r)-2-\Re(s)}tdt<\infty.$$
 But $-2+2\Re(r)-\Re(s)=-1-2\sqrt{\mu+\left(\frac{s+1}{2}\right)^2}<-1$, therefore our function is not integrable.
 \item If $\Re(s)=-2$, then $S(z)\in\ \cal H _s$ if and only if 
 $$\lim_{\rho\rightarrow 1}\ \int_0^{2\pi}\ \left|(1-\rho e^{i\theta})^r\right|^{2}d^*\theta<\infty,$$ or also via lemma \ref{lem:asym} if and only if 
  $$\lim_{\rho\rightarrow 1}\ \int_0^{2\pi}\ \left|1-\rho e^{i\theta}\right|^{2\Re(r)}d^*\theta<\infty.$$ Now, remark that $\Re(-r)>0$ and $\Re(1+2r)<0$. So, from corollary \ref{cor:theta-int}, we know that there is a non zero constant $C$ such that 
 $$\int_0^{2\pi}\ \left|1-\rho e^{i\theta}\right|^{2\Re(r)}d^*\theta \sim C\left(1-\rho^2\right)^{1+2\Re(r)}.$$ Therefore, the above limit is infinite.
 \item If $-2<\Re(s)<-1$, then $S(z)\in\ \cal H _s$ if and only if 
 $$\int_D\ -\bar{r}\left|(1-z)^{r-1}\right|^{2}(1-z)\bar{z}\left(1-|z|^2\right)^{-2-\Re(s)}dvol(z)<\infty.$$
 This function is integrable if and only if it is integrable near $z=1$. Thanks to lemma \ref{lem:asym}, there is some non zero constant $C$ such that 
 $$-\bar{r}\left|(1-z)^{r-1}\right|^{2}(1-z)\bar{z}\left(1-|z|^2\right)^{-2-\Re(s)}\sim C|1-z|^{2\left(\Re(r)-\frac{1}{2}\right)}\left(1-|z|^2\right)^{-2-\Re(s)}.$$ Its integral on $D$ is 
 $$\int_0^1\ \left(\int_0^{2\pi}\ \left|1-te^{i\theta}\right|^{2\left(\Re(r)-\frac{1}{2}\right)}d^*\theta\right) \left(1-t^2\right)^{-2-\Re(s)}tdt.$$
 Now remark that $\Re\left(\frac{1}{2}-r\right)>0$ and $\Re(2r)<0$. Then from corollary \ref{cor:theta-int}, we know that there is a non zero constant $C$ such that 
 $$\int_0^{2\pi}\ \left|1-te^{i\theta}\right|^{2\left(\Re(r)-\frac{1}{2}\right)}d^*\theta \sim C\left(1-t^2\right)^{2\Re(r)}.$$ So our condition becomes 
 $$\int_0^1\ \left(1-t^2\right)^{2\Re(r)-2-\Re(s)}tdt<\infty.$$
 But $-2+2\Re(r)-\Re{s}=-1-2\sqrt{\mu+\left(\frac{s+1}{2}\right)^2}<-1$, and therefore the function is not integrable.
 \item If $\Re(s)=-1$, then $S(z)\in\ \cal H _s$ if and only if 
 $$\lim_{\rho\rightarrow 1}\ \int_0^{2\pi}\ -\bar{r}\left|(1-\rho e^{i\theta})^{r-1}\right|^{2}(1-\rho e^{i\theta})\rho e^{-i\theta}d^*\theta<\infty.$$ Using lemma \ref{lem:asym}, we see that this limit is finite if and only if 
 $$\lim_{\rho\rightarrow 1}\ \int_0^{2\pi}\ \left|1-\rho e^{i\theta}\right|^{2\left(\Re(r)-\frac{1}{2}\right)}d^*\theta<\infty.$$
 Now remark that $\Re\left(\frac{1}{2}-r\right)>0$ and $\Re(2r)<0$. Thus from corollary \ref{cor:theta-int}, we know that there is a non zero constant $C$ such that 
 $$\int_0^{2\pi}\ \left|1-\rho e^{i\theta}\right|^{2\left(\Re(r)-\frac{1}{2}\right)}d^*\theta \sim C\left(1-\rho^2\right)^{2\Re(r)}.$$ So our condition becomes 
 $$\lim_{\rho\rightarrow 1}\ \left(1-\rho^2\right)^{2\Re(r)}<\infty.$$
 Therefore the limit is not finite.
 \item If $-1<\Re(s)<0$, then $S(z)\in\ \cal H _s$ if and only if 
 $$\int_0^1\ \left(\int_0^{2\pi}\ |r|^2\left|(1-te^{i\theta})^{r-1}\right|^{2}d^*\theta\right) \left(1-t^2\right)^{-1-\Re(s)}tdt<\infty.$$ Using lemma \ref{lem:asym}, this is equivalent to the condition:
$$\int_0^1\ \left(\int_0^{2\pi}\ |r|^2\left|1-te^{i\theta}\right|^{2(\Re(r)-1)}d^*\theta\right) \left(1-t^2\right)^{-1-\Re(s)}tdt<\infty.$$ Now, remark that $\Re(1-r)>0$ and $\Re(2r-1)<0$. Hence from corollary \ref{cor:theta-int}, we know that there is a non zero constant $C$ such that 
 $$\int_0^{2\pi}\ \left|1-te^{i\theta}\right|^{2(\Re(r)-1)}d^*\theta \sim \left(1-t^2\right)^{2\Re(r)-1}.$$ So our condition becomes 
 $$\int_0^1\ \left(1-t^2\right)^{2\Re(r)-2-\Re(s)}tdt<\infty.$$ 
 But $-2+2\Re(r)-\Re{s}=-1-2\sqrt{\mu+\left(\frac{s+1}{2}\right)^2}<-1$, and therefore this function is not integrable.
\end{enumerate}
$\blacktriangleright$ Assume now that $r-a=-n$ or $r-a_2=-n$ for some positive integer $n$. Then from lemma \ref{lem:hyperpol}, we know that necessarily $\Re(s)<-2$. We also know that $F(x)$ is polynomial (of degree $n$) and that $F(1)\not=0$. Now, $S(z)\in\ \cal H _s$ if and only if 
$$\int_D\ \left|(1-z)^r\right|^{2}|F(z)|^2\left(1-|z|^2\right)^{-3-\Re(s)}dvol(z)<\infty.$$ This function is integrable if and only if it is integrable near $z=1$. But then as $F(1)\not=0$ and using lemma \ref{lem:asym}, there is some non zero constant $C$ such that $\left|(1-z)^r\right|^{2}|F(z)|^2\left(1-|z|^2\right)^{-3-\Re(s)}\sim C \left|1-z\right|^{2\Re(r)}\left(1-|z|^2\right)^{-3-\Re(s)}$ when $z\rightarrow 1$. Hence we are left with the previous situation.
\end{proof}

\begin{prop}
Assume that $r-a\not\in\ \Z_{\leq 0}$ and $r-a_2\not\in\ \Z_{\leq 0}$. Assume also that $1+a_1+a-r\in\ \Z_{\leq 0}$ or $1+a_1+a_2-r\in\ \Z_{\leq 0}$. Then the function $S(z)$ belongs to $\cal H _s$.
\end{prop}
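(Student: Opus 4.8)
The plan is to compute $S(z)$ explicitly in this case and then reduce $S\in\cal H _s$ to an integrability question near $z=1$, following the proof of the preceding proposition almost verbatim; the single essential change is that one crucial exponent acquires the opposite sign. By Lemma \ref{lem:hyperN}(1) at most one of the two alternatives can occur, and since $a$ and $a_2$ play symmetric roles in $S$, I may assume $1+a_1+a-r=-n$ with $n\in\ \Z_{\geq 0}$. Then Lemma \ref{lem:hyperN}(2)--(3) gives $F(z)=(1-z)^{a_2-r-n}P_n(z)$ with $P_n$ a polynomial of degree $n$ and $P_n(1)\not=0$, hence $S(z)=(1-z)^rF(z)=(1-z)^{a_2-n}P_n(z)$. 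Substituting $r=1+a_1+a+n$ into $r=\tfrac{1+s}{2}-\sqrt{\mu+\left(\tfrac{1+s}{2}\right)^2}$ yields
$$a_2-n=1+s-r=\frac{1+s}{2}+\sqrt{\mu+\left(\frac{1+s}{2}\right)^2},$$
so $a_2-n$ is precisely the root of the indicial equation \emph{not} chosen in the definition of $r$; in particular $2\Re(a_2-n)-\Re(s)=1+2\sqrt{\mu+\left(\tfrac{1+s}{2}\right)^2}$, in contrast with the value $2\Re(r)-\Re(s)=1-2\sqrt{\mu+\left(\tfrac{1+s}{2}\right)^2}$ that made the previous radial integral diverge. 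I would also record that the hypothesis forces $\Re(s)<-1$ (if $\Re(s)\geq -2$ then Lemma \ref{lem:hyperN}(4) gives $n=0$ and $\Re(s)<-1$; otherwise $\Re(s)<-2$ a fortiori), so only the three spaces $\cal H _s$ attached to $\Re(s)<-2$, $\Re(s)=-2$ and $-2<\Re(s)<-1$ need be examined, with $n=0$ (hence $S(z)=(1-z)^{a_2}$) in the last two.

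Since $P_n$ is a polynomial with $P_n(1)\not=0$, the function $S$ is holomorphic on $D$, bounded near every boundary point except $z=1$, and $|S(z)|^2\sim |P_n(1)|^2|1-z|^{2\Re(a_2-n)}$ as $z\to 1$ by Lemma \ref{lem:asym}; when $n=0$ one has in addition $S(z)\overline{S'(z)z}=-\overline{a_2}\,\big|(1-z)^{a_2-1}\big|^2(1-z)\overline z$, exactly as in the previous proof. Therefore $S\in\cal H _s$ reduces in each case to an integrability statement near $z=1$ for $|1-z|^{2\Re(a_2-n)}$ (resp. for $|1-z|^{2\Re(a_2)-1}$ when $-2<\Re(s)<-1$) against the weight $(1-|z|^2)^{-3-\Re(s)}$ (resp. the Hardy-space condition, resp. $(1-|z|^2)^{-2-\Re(s)}$). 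Passing to polar coordinates, evaluating the angular integral by Lemma \ref{lem:theta-int} and Lemma \ref{lem:Gauss} (or Corollary \ref{cor:theta-int}), and collecting powers of $1-t^2$, the radial integral either reduces to $\int_0^1(1-t^2)^{-3-\Re(s)}t\,dt$ or $\int_0^1(1-t^2)^{-2-\Re(s)}t\,dt$ (convergent since $\Re(s)<-2$, resp. $\Re(s)<-1$), or has exponent exactly $-1+2\sqrt{\mu+\left(\tfrac{1+s}{2}\right)^2}>-1$; for $\Re(s)=-2$ the angular integral $\int_0^{2\pi}|1-\rho e^{i\theta}|^{2\Re(a_2)}d^*\theta$ stays bounded as $\rho\to1$ because $\Re(a_2)=-\tfrac12+\sqrt{\mu+\left(\tfrac{1+s}{2}\right)^2}>-\tfrac12$. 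In every case the integral converges, so $S\in\cal H _s$.

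The part requiring care is purely the bookkeeping of the angular integrals: for each of the three spaces one must determine whether the exponent $\nu$ in $\int_0^{2\pi}|1-te^{i\theta}|^{2\nu}d^*\theta$ is $>-\tfrac12$, $=-\tfrac12$, or $<-\tfrac12$, since Corollary \ref{cor:theta-int} applies only in the last case. When $\nu>-\tfrac12$ the angular integral is bounded as $t\to1$ (Lemma \ref{lem:theta-int} and Lemma \ref{lem:Gauss}(2)); when $\nu=-\tfrac12$ it grows only like $\log\tfrac1{1-t^2}$ (Lemma \ref{lem:Gauss}(4)), which is harmless against a strictly integrable radial weight; and although $\nu\in\ \Z_{<0}$ is not excluded a priori, Lemma \ref{lem:theta-int} combined with Lemma \ref{lem:Gauss}(3) still yields the same power asymptotic, so the integrality restriction in Corollary \ref{cor:theta-int} is no real obstruction. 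Once these sub-cases are organised, each reduces to a single Beta-type integral, and the sign of $2\Re(a_2-n)-\Re(s)$ computed above is what turns every estimate from divergent (as in the previous proposition) to convergent.
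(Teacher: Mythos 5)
Your proposal is correct and is essentially the paper's own argument: the same reduction via Lemma \ref{lem:hyperN} to $S(z)=(1-z)^{a_{(2)}-n}P_n(z)$ with $P_n(1)\neq 0$ (and $n=0$ forced when $\Re(s)\geq -2$), the same case split over the spaces $\cal H _s$ for $\Re(s)<-2$, $\Re(s)=-2$, $-2<\Re(s)<-1$, and the same angular/radial asymptotics via Lemmas \ref{lem:asym}, \ref{lem:theta-int}, \ref{lem:Gauss} and Corollary \ref{cor:theta-int}, the decisive point in both being that the radial exponent is $-1+2\sqrt{\mu+\left(\tfrac{1+s}{2}\right)^2}>-1$ rather than $-1-2\sqrt{\mu+\left(\tfrac{1+s}{2}\right)^2}$. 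Your reading that the hypothesis forces $\Re(s)<-1$ is the intended one (the ``$\Re(s)>-1$'' in the paper's proof is a typo), and your remark that the integrality restriction in Corollary \ref{cor:theta-int} is harmless only adds care the paper glosses over.
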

\begin{proof}
 By lemma \ref{lem:hyperN}, we have $F(1)=0$. More precisely, there is a polynomial $P(z)$ of degree $n$ such that $P(1)\not=0$ and $F(z)=(1-z)^{a_{(2)}-r-n}P(z)$ (recall that $a_{(2)}$ denotes either $a$ or $a_2$). Thus $S(z)=(1-z)^{a_{(2)}-n}P(z)$.
 
$\blacktriangleright$ Assume first that $n=0$, that is $1+s-a_{(2)}-r=0$. Then lemma \ref{lem:hyperN} implies that $P=1$ and that $\Re(s)>-1$.
\begin{enumerate}
 \item If $-2<\Re(s)<-1$, then $S\in\ \cal H _s$ if and only if 
 $$\int_D\ -\overline{a_{(2)}}\left|(1-z)^{a_{(2)}-1}\right|^2(1-z)\bar{z}\left(1-|z|^2\right)^{-2-\Re(s)}dvol(z)<\infty.$$ This function is integrable if and only if it is integrable near $z=1$. Thanks to lemma \ref{lem:asym}, there is a non zero constant $C$ such that  
 $$\left|1-z^{a_{(2)}-1}\right|^2(1-z)\bar{z}\left(1-|z|^2\right)^{-2-\Re(s)}\sim C\left|1-z\right|^{2\left(\Re(a_{(2)})-\frac{1}{2}\right)}\left(1-|z|^2\right)^{-2-\Re(s)}.$$ So our condition becomes 
 $$\int_0^1\ \left(\int_0^{2\pi}\ \left|1-te^{i\theta}\right|^{2\left(\Re(a_{(2)})-\frac{1}{2}\right)}d^*\theta\right)\left(1-t^2\right)^{-2-\Re(s)}tdt<\infty.$$ Now remark that $\Re(2a_{(2)})<0$. Then from corollary \ref{cor:theta-int}, we know that there is a non zero constant $C$ such that 
 $$\int_0^{2\pi}\ \left|1-te^{i\theta}\right|^{2\left(\Re(a_{(2)})-\frac{1}{2}\right)}d^*\theta \sim C\left(1-t^2\right)^{2\Re(a_{(2)})}.$$ Thus our condition is now 
 $$\int_0^1\ \left(1-t^2\right)^{2\Re(a_{(2)})-2-\Re(s)}tdt<\infty.$$
  As $2\Re(a_{(2)})-2-\Re(s)=2+2\Re(s)-2\Re(r)-2-\Re(s)=-1+2\sqrt{\mu+\left(\frac{s+1}{2}\right)^2}>-1$, we conclude that this function is integrable.
 \item If $\Re(s)=-2$, then $S\in\ \cal H _s$ if and only if 
 $$\lim_{\rho\rightarrow 1}\ \int_0^{2\pi}\ \left|\left(1-\rho e^{i\theta}\right)^{a_{(2)}}\right|^{2}d^*\theta<\infty.$$ Using lemma \ref{lem:asym}, this is equivalent to 
 $$\lim_{\rho\rightarrow 1}\ \int_0^{2\pi}\ \left|1-\rho e^{i\theta}\right|^{2\Re(a_{(2)})}d^*\theta<\infty.$$ Now remark that $-\Re(a_{(2)})>0$ and $\Re(1+2a_{(2)})=2\sqrt{\mu+\left(\frac{s+1}{2}\right)^2}>0$. From lemma \ref{lem:theta-int}, we know that 
 $$\int_0^{2\pi}\ \left|1-\rho e^{i\theta}\right|^{2\Re(a_{(2)})}d^*\theta=\!\phantom{l}_2F_1\left(-\Re(a_{(2)}),-\Re(a_{(2)});1;\rho^2\right).$$ So our condition becomes 
 $$\lim_{\rho\rightarrow 1}\ \!\phantom{l}_2F_1\left(-\Re(a_{(2)}),-\Re(a_{(2)});1;\rho^2\right)<\infty.$$  From Gauss theorem (lemma \ref{lem:Gauss}) we know that  $$\rho \mapsto \!\phantom{l}_2F_1\left(-\Re(a_{(2)}),-\Re(a_{(2)});1;\rho^2\right)$$ is continuous on $[0,1]$, and hence has a limit when $\rho \rightarrow 1$.
 \item If $\Re(s)<-2$, then $S\in\ \cal H _s$ if and only if 
 $$\int_0^1\ \left(\int_0^{2\pi}\ \left|\left(1-te^{i\theta}\right)^{\Re(a_{(2)})}\right|^{2}d^*\theta\right)\left(1-t^2\right)^{-3-\Re(s)}tdt < \infty.$$ 
 Using lemma \ref{lem:asym}, this is equivalent to 
 $$\int_0^1\ \left(\int_0^{2\pi}\ \left|1-te^{i\theta}\right|^{2\Re(a_{(2)})}d^*\theta\right)\left(1-t^2\right)^{-3-\Re(s)}tdt < \infty.$$ 
Note that we always have $\Re(-a_{(2)})>0$. If $\Re(1+2a_{(2)})<0$, then from corollary \ref{cor:theta-int}, we know that there is a non zero constant $C$ such that 
 $$\int_0^{2\pi}\ \left|1-te^{i\theta}\right|^{2\Re(a_{(2)})}d^*\theta \sim C\left(1-t^2\right)^{1+2\Re(a_{(2)})}.$$ So our condition becomes 
 $$\int_0^1\ \left(1-t^2\right)^{2\Re(a_{(2)})-2-\Re(s)}tdt<\infty.$$ But $-2+2\Re(a_{(2)})-\Re{s}=-1+2\sqrt{\mu+\left(\frac{s+1}{2}\right)^2}>-1$, and therefore the function is integrable. If $\Re(1+2a_{(2)})=0$ then lemma \ref{lem:theta-int} together with lemma \ref{lem:Gauss} imply that there is a non zero constant $C$ such that 
 $$\int_0^{2\pi}\ \left|1-te^{i\theta}\right|^{2\Re(a_{(2)})}d^*\theta \sim C\log\left(1-t^2\right).$$
  So our condition becomes 
  $$\int_0^1\ \log\left(1-t^2\right)\left(1-t^2\right)^{-3-\Re(s)}tdt<\infty.$$ This function is integrable since $\Re(s)<-2$. If $\Re(1+2a_{(2)})>0$, then lemma \ref{lem:theta-int} together with lemma \ref{lem:Gauss} imply that $t\mapsto \int_0^{2\pi}\ \left|1-te^{i\theta}\right|^{2\Re(a_{(2)})}d^*\theta$ is continuous on $[0,1]$, and therefore the function $$\left(\int_0^{2\pi}\ \left|1-te^{i\theta}\right|^{2\Re(a_{(2)})}d^*\theta\right)\left(1-t^2\right)^{-3-\Re(s)}t$$ is integrable on $[0,1[$.
 \end{enumerate}
$\blacktriangleright$ Assume now that $n>0$. So lemma \ref{lem:hyperN} implies that $\Re(s)>-2$. Then $S\in\ \cal H _s$ if and only if 
 $$\int_D\ \left|(1-z)^{a_{(2)}-n}\right|^{2}|P(z)|^2\left(1-|z|^2\right)^{-3-\Re(s)}dvol(z) < \infty.$$ This function is integrable if and only if it is integrable near $z=1$. As $P(1)\not=0$ and using lemma \ref{lem:asym}, this function is integrable near $1$ if and only if 
 $$\int_0^1\ \left(\int_0^{2\pi}\ \left|1-te^{i\theta}\right|^{2(\Re(a_{(2)})-n)}d^*\theta\right)\left(1-t^2\right)^{-3-\Re(s)}tdt < \infty.$$ Remark that $\Re(n-a_{(2)})>0$ and $\Re(1+2a_{(2)}-2n)<0$. Thus from corollary \ref{cor:theta-int}, we know that there is a non zero constant $C$ such that 
 $$\int_0^{2\pi}\ \left|1-te^{i\theta}\right|^{2(\Re(a_{(2)})-n)}d^*\theta \sim C\left(1-t^2\right)^{1+2\Re(a_{(2))}-2n}.$$ So our condition becomes 
 $$\int_0^1\ \left(1-t^2\right)^{2\Re(a_{(2))}-2n-2-\Re(s)}tdt<\infty.$$
 But $-2+2\Re(a_{(2)})-2n-\Re(s)=-1+2\sqrt{\mu+\left(\frac{s+1}{2}\right)^2}>-1$, and therefore the function is integrable.
\end{proof}

\begin{prop}
Assume that $r-a\not\in\ \Z_{\leq 0}$, $r-a_2\not\in\ \Z_{\leq 0}$, $1+a_1+a-r\not\in\ \Z_{\leq 0}$ and  $1+a_1+a_2-r\not\in\ \Z_{\leq 0}$. Then the function $S(z)$ belongs to $\cal H _s$ if and only if $s$ is real, $-1<s<0$, and $r=0$.
\end{prop}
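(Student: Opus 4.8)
The plan is to reduce the condition $S\in\cal H _s$ to the convergence of a one–dimensional radial integral near $z=1$, exactly as in the two preceding propositions, and then to read off the answer from a single sign comparison.

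First I would collect the facts about $S$ needed. Under the stated hypotheses, Lemma~\ref{lem:hyper}(i) and Corollary~\ref{cor:hyper} give that $F(z)=\!\phantom{l}_2F_1(r-a,r-a_2;1+a_1;z)$ is continuous on $\bar D$ with $F(1)\not=0$; since $S(z)=(1-z)^rF(z)$, all the asymptotics of Lemmas~\ref{lem:asym} and~\ref{lem:asymS} apply near $z=1$, the nonzero constant $|F(1)|^2$ playing no role. Writing $r=\frac{1+s}{2}-\sqrt{\mu+\left(\frac{1+s}{2}\right)^2}$, I would record that
\[
2\Re(r)-2-\Re(s)=-1-2\sqrt{\mu+\left(\tfrac{1+s}{2}\right)^2}\in(-2,-1),
\]
that $\Re(r)<\tfrac12$ always (because $\Re(s)<0$), and that $\Re(r)<0$ whenever $\Re(s)\le -1$. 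Finally, a short computation using $0<\mu+\left(\frac{1+s}{2}\right)^2<\frac14$ shows that $r=0$ holds if and only if $\mu=0$ and $s$ is real with $-1<s<0$; in particular the three conditions in the statement collapse to the single condition $r=0$, so it suffices to prove $S\in\cal H _s$ if and only if $r=0$.

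For the implication $r\not=0\Rightarrow S\notin\cal H _s$ I would run through the five regimes of $\Re(s)$ just as in the previous proofs. If $\Re(s)<-2$ or $\Re(s)=-2$, use $|S(z)|^2\sim|F(1)|^2|1-z|^{2\Re(r)}$ together with $1+2\Re(r)=2+\Re(s)-2\sqrt{\mu+\left(\frac{1+s}{2}\right)^2}<0$; by Corollary~\ref{cor:theta-int} the defining integral (resp. limit) of $\cal H _s$ reduces to $\int_0^1(1-t^2)^{2\Re(r)-2-\Re(s)}t\,dt$ (resp. $\lim_{\rho\to1}(1-\rho^2)^{1+2\Re(r)}$), which diverges (resp. is infinite). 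If $-2<\Re(s)<-1$ or $\Re(s)=-1$, use Lemma~\ref{lem:asymS}(2) — legitimate since $r\not=0$ — that is $S(z)\overline{S'(z)z}\sim C''|1-z|^{2(\Re(r)-\frac12)}$; since $\Re(r)<0$ here, Corollary~\ref{cor:theta-int} again reduces the condition to $\int_0^1(1-t^2)^{2\Re(r)-2-\Re(s)}t\,dt<\infty$ (resp. $\lim_{\rho\to1}(1-\rho^2)^{2\Re(r)}<\infty$), which fails. If $-1<\Re(s)<0$ with $r\not=0$, use Lemma~\ref{lem:asymS}(3), $|S'(z)|^2\sim C'''|1-z|^{2(\Re(r)-1)}$; since $\Re(r)<\tfrac12$, Corollary~\ref{cor:theta-int} applies and the condition becomes $\int_0^1(1-t^2)^{2\Re(r)-2-\Re(s)}t\,dt<\infty$, which again fails because the exponent is $<-1$. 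These are precisely the computations in the proof of the proposition treating $r\in\Z_{\le0}$, with $1$ replaced by $F(z)$.

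For the converse $r=0\Rightarrow S\in\cal H _s$: now $s\in\R$ with $-1<s<0$, so $S=F$ is continuous on $\bar D$ with $|S(0)|=1$, $S'=F'$ is continuous on $\bar D\setminus\{1\}$ (Lemma~\ref{lem:hyper}) hence bounded off any neighbourhood of $1$, and $|S'(z)|^2\sim C|1-z|^{2s}$ near $z=1$ by Lemma~\ref{lem:asym}(2) (here $s-2r=s$). We are in the regime $-1<\Re(s)<0$, so $S\in\cal H _s$ amounts to $\int_D|S'(z)|^2(1-|z|^2)^{-1-s}dvol(z)<\infty$; this integral is finite away from $z=1$ because $S'$ is bounded there and $-1-s>-1$, and near $z=1$ I would estimate $\int_0^{2\pi}|1-te^{i\theta}|^{2s}d^*\theta$ via Lemmas~\ref{lem:theta-int} and~\ref{lem:Gauss}: it is $\sim C(1-t^2)^{1+2s}$ for $-1<s<-\frac12$, $\sim C\log(1-t^2)$ for $s=-\frac12$, and bounded for $-\frac12<s<0$; multiplying by $(1-t^2)^{-1-s}t$ gives an integrable function on $[0,1)$ in each case (respectively $(1-t^2)^{s}t$ with $s>-1$, $\log(1-t^2)(1-t^2)^{-1/2}t$, and $(1-t^2)^{-1-s}t$ with $-1-s>-1$). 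Hence $S\in\cal H _s$, which completes the proof. The only delicate point is the bookkeeping over the five $\Re(s)$–regimes and the boundary sub-cases — notably the logarithmic behaviour when $1+2s=0$, and the few degenerate values of $\Re(r)$ for which Corollary~\ref{cor:theta-int} must be replaced by a direct use of Lemma~\ref{lem:theta-int} — but in every instance the outcome is dictated by whether the radial exponent equals the divergent value $-1-2\sqrt{\mu+\left(\frac{1+s}{2}\right)^2}<-1$ (case $r\not=0$) or lies strictly above $-1$ (case $r=0$).
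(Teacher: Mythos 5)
Your proposal is correct and follows essentially the same route as the paper: the same reduction of the $\cal H _s$-membership to radial integrals near $z=1$ via lemmas \ref{lem:asym}, \ref{lem:asymS}, \ref{lem:theta-int}, corollary \ref{cor:theta-int} and Gauss' theorem, with the same case analysis over the regimes of $\Re(s)$ and the same sign comparison of the exponent $2\Re(r)-2-\Re(s)$ against $-1$. Your preliminary observation that the three stated conditions collapse to $r=0$, and your explicit treatment of the $\Re(s)=-1$ case (which the paper leaves to the reader), are harmless reorganizations of the same argument.
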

\begin{proof}
Thanks to corollary \ref{cor:hyper}, we know that $F(1)\not=0$.
 \begin{enumerate}
 \item If $\Re(s)<-2$, then $S\in\ \cal H _s$ if and only if 
 $$\int_D\ \left|(1-z)^r\right|^{2}|F(z)|^2\left(1-|z|^2\right)^{-3-\Re(s)}dvol(z) < \infty.$$ It is clear that this function is integrable if and only if it is integrable near $z=1$. But near $z=1$ we have $|F(z)|^2 \sim |F(1)|^2$ since $F$ is continuous by lemma \ref{lem:hyper}. So using lemma \ref{lem:asym}, our function is integrable near $z=1$ if and only if 
 $$\int_0^1\ \left(\int_0^{2\pi}\ \left|1-te^{i\theta}\right|^{2\Re(r)}d^*\theta\right)\left(1-t^2\right)^{-3-\Re(s)}tdt < \infty.$$
 Remark that $\Re(-r)>0$ and $\Re(2r+1)<0$. So by corollary \ref{cor:theta-int} there is a non zero constant $C$ such that 
 $$\int_0^{2\pi}\ \left|1-te^{i\theta}\right|^{2\Re(r)}d^*\theta \sim C\times \left(1-t^2\right)^{1+2\Re(r)}.$$ So $S\in\ \cal H _s$ if and only if 
 $$\int_0^1\ \left(1-t^2\right)^{2\Re(r)-2-\Re(s)}tdt < \infty.$$
  But $-2+2\Re(r)-\Re(s)=-1-2\sqrt{\mu+\left(\frac{s+1}{2}\right)^2}<-1$. Therefore the function is not integrable, that is $S\not\in\ \cal H _s$.
 \item If $\Re(s)=-2$, then $S\in\ \cal H _s$ if and only if 
 $$\lim_{\rho\rightarrow 1}\ \int_0^{2\pi}\ \left|\left(1-\rho e^{i\theta}\right)^{\Re(r)}\right|^{2}\left|F(\rho e^{i\theta})\right|^2d^*\theta <\infty.$$
 Using lemma \ref{lem:asym}, this is equivalent to 
 $$\lim_{\rho\rightarrow 1}\ \int_0^{2\pi}\ \left|1-\rho e^{i\theta}\right|^{2\Re(r)}\left|F(\rho e^{i\theta})\right|^2d^*\theta <\infty.$$ As above this is true if and only if 
 $$\lim_{\rho\rightarrow 1}\ \int_0^{2\pi}\ \left|1-\rho e^{i\theta}\right|^{2\Re(r)}d^*\theta<\infty.$$
 We have $\Re(1+2r)<0$. So by corollary \ref{cor:theta-int}, the limit is not finite. Hence $S\not\in\ \cal H _s$.
 \item If $-2<\Re(s)<-1$, then $S\in\ \cal H _s$ if and only if 
 $$\int_D\ S(z)S'(z)\bar{z}\left(1-|z|^2\right)^{-2-\Re(s)}dvol(z) < \infty.$$ This function is integrable if and only if it is integrable near $z=1$. Now remark that $2\Re(r)<1+\Re(s)<0$. Thus, according to lemma \ref{lem:asymS}, our function is integrable near $z=1$ if and only if 
 $$\int_D\ |1-z|^{2\left(\Re(r)-\frac{1}{2}\right)}\left(1-|z|^2\right)^{-2-\Re(s)}dvol(z) < \infty,$$ if and only if 
 $$\int_0^1\ \left(\int_0^{2\pi}\ \left|1-te^{i\theta}\right|^{2\left(\Re(r)-\frac{1}{2}\right)}d^*\theta\right)\left(1-t^2\right)^{-2-\Re(s)}tdt<\infty.$$
As $\frac{1}{2}-\Re(r)>0$ and $\Re(r)<0$, corollary \ref{cor:theta-int} implies that there is a non zero constant $C$ such that 
 $$\int_0^{2\pi}\ \left|1-te^{i\theta}\right|^{2\left(\Re(r)-\frac{1}{2}\right)}d^*\theta \sim C\left(1-t^2\right)^{2\Re(r)}.$$ So our condition becomes 
  $$\int_0^1\ \left(1-t^2\right)^{2\Re(r)-2-\Re(s)}tdt<\infty.$$
  But we have $2\Re(r)-2-\Re(s)=-1-2\sqrt{\mu+\left(\frac{s+1}{2}\right)^2}<-1$. Hence $S\not\in\ \cal H _s.$
 \item If $\Re(s)=-1$, then $S\in\ \cal H _s$ if and only if 
 $$\lim_{\rho\rightarrow 1}\ \int_0^{2\pi}\ S\left(\rho e^{i\theta}\right)S'\left(\rho e^{-i\theta}\right)e^{-i\theta}d^*\theta\ < \infty.$$ This case is analogous to the previous one, and left to the reader.
 \item Finally, assume that $-1<\Re(s)<0$. Then $s$ and $r$ are real. We have $S\in\ \cal H _s$ if and only if 
 $$\int_D\ \left|S'(z)\right|^2\left(1-|z|^2\right)^{-1-s}dvol(z) < \infty.$$ This is true if and only if the function is integrable near $z=1$.
 
 Suppose first that $r\not=0$. Then, according to lemma \ref{lem:asymS}, the function is integrable near $z=1$ if and only if 
  $$\int_D\ |1-z|^{2(r-1)}\left(1-|z|^2\right)^{-1-s}dvol(z) < \infty,$$ if and only if 
  $$\int_0^1\ \left(\int_0^{2\pi}\ \left|1-te^{i\theta}\right|^{2(r-1)}d^*\theta\right)\left(1-t^2\right)^{-1-s}tdt < \infty.$$
  Now remark that $1-r>0$ and $2r-1<0$. Thus from corollary \ref{cor:theta-int}, there is a non zero constant $C$ such that 
  $$\int_0^{2\pi}\ \left|1-te^{i\theta}\right|^{2(r-1)}d^*\theta \sim C\left(1-t^2\right)^{2r-1}.$$
Therefore, our condition becomes 
  $$\int_0^1\ \left(1-t^2\right)^{2r-2-s}tdt<\infty.$$ But we have $2r-2-s=-1-2\sqrt{\mu+\left(\frac{s+1}{2}\right)^2}<-1$. Hence $S\not\in\ \cal H _s.$
 
 Suppose now that $r=0$. Then, according to lemma \ref{lem:asymS}, the function is integrable near $z=1$ if and only if 
 $$\int_D\ |1-z|^{2s}\left(1-|z|^2\right)^{-1-s}dvol(z) < \infty,$$ if and only if 
  $$\int_0^1\ \left(\int_0^{2\pi}\ \left|1-te^{i\theta}\right|^{2s}d^*\theta\right)\left(1-t^2\right)^{-1-s}tdt < \infty.$$
  From lemma \ref{lem:theta-int}, we have 
  $$\int_0^{2\pi}\ \left|1-te^{i\theta}\right|^{2s}d^*\theta=\!\phantom{l}_2F_1\left(-s,-s;1;t^2\right).$$
  Thus our condition is now
  $$\int_0^1\ \!\phantom{l}_2F_1\left(-s,-s;1;t^2\right)\left(1-t^2\right)^{-1-s}tdt < \infty.$$
 Note that we always have $-s>0$. If $2s+1<0$, then lemma \ref{lem:Gauss} imply that 
 $$\!\phantom{l}_2F_1\left(-s,-s;1;t^2\right)\left(1-t^2\right)^{-1-s}t \sim \left(1-t^2\right)^{s},$$ which is integrable since $s>-1$. If $2s+1=0$, then lemma \ref{lem:Gauss} imply that 
 $$\!\phantom{l}_2F_1\left(-s,-s;1;t^2\right)\left(1-t^2\right)^{-1-s}t \sim \log\left(1-t^2\right)\left(1-t^2\right)^{-1-s},$$ which is integrable since $-1-s>-1$. If $2s+1>0$, then lemma \ref{lem:Gauss} imply that the function $t\mapsto \!\phantom{l}_2F_1\left(-s,-s;1;t^2\right)$ is continuous on $[0,1]$, and therefore  $$\int_0^1 \!\phantom{l}_2F_1\left(-s,-s;1;t^2\right)\left(1-t^2\right)^{-1-s}tdt < \infty,$$ since $-1-s>-1$. Consequently, when $r=0$, we always have $S\in\ \cal H _s$.
\end{enumerate}
\end{proof}

Let us now state the consequence of these propositions:

\begin{prop}\label{prop:discCS}
A simple weight module $N(b_1,b_2)$ in the complementary series is a Hilbert submodule of $V$ if and only if 
 \begin{itemize}
 \item[$\blacktriangleright$] Either $a_1,\ a_2 \in\ \R$, $-1<a_1+a_2+a<0$, and $N(b_1,b_2) \cong N(a+a_1,a_2)$.
 \item[$\blacktriangleright$] Or $-1<a_1,\ a_2<0$, $-2<a_1+a_2-a<-1$, and $N(b_1,b_2) \cong N(a_1,a_2-a)$.
\end{itemize}
Moreover, in the first case the submodule $N(a+a_1,a_2)$ is generated by the vector 
$$w(0)=\displaystyle\sum_{n\geq 0}\ \frac{(-a)_n(-a_2)_n}{(1+a_1)_n}\frac{z(n,n)}{n!}.$$ In the second case, the submodule $N(a_1,a_2-a+2n)$ is generated by the vector 
$$w(0)=\displaystyle\sum_{n\geq 0}\ \frac{(-a)_n}{n!}z(n,n).$$ 
\end{prop}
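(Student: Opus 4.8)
The plan is to deduce Proposition~\ref{prop:discCS} from the three preceding propositions, which together decide exactly when the generating function $S(z)=(1-z)^{r}F(z)$ lies in $\mathcal{H}_{s}$. Recall from the preceding discussion that a complementary-series module $N(b_{1},b_{2})$ whose support meets $Supp(V)$ is a Hilbert submodule of $V$ if and only if the unique solution $(u_{n})$ of \eqref{eq:diffA1} with $u_{0}=1$, for the value $\xi$ attached to $N(b_{1},b_{2})$, is square-summable against $\|z(n,n)\|^{2}$, equivalently $S\in\mathcal{H}_{s}$; and such a solution furnishes a generator $w(0)=\sum_{n}u_{n}z(n,n)$ of that submodule, whose isomorphism type is pinned down by Lemire's theorem~\ref{thm:Lemire} from its weight $a_{1}-a_{2}+a$ and the eigenvalue $\xi$ of $FE$. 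As $\xi$ runs over the complementary-series interval, $\mu+\bigl(\frac{1+s}{2}\bigr)^{2}=\bigl(\frac{1+s}{2}-r\bigr)^{2}$ runs over $(0,\frac14)$, so $r$ runs over $\bigl(\frac{s}{2},\frac{1+s}{2}\bigr)$. The three propositions then say that $S\in\mathcal{H}_{s}$ in exactly two situations: (a)~$r-a\notin\Z_{\leq 0}$, $r-a_{2}\notin\Z_{\leq 0}$, and one of $1+a_{1}+a-r$, $1+a_{1}+a_{2}-r$ lies in $\Z_{\leq 0}$; or (b)~none of these four numbers lies in $\Z_{\leq 0}$ and $s\in\R$, $-1<s<0$, $r=0$. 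It remains to translate (a) and (b) into conditions on $(a,a_{1},a_{2})$, to identify the module, and to write down $w(0)$.

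First I would handle (b). Here $r=0$ forces $\mu=0$, i.e. $\xi=a_{2}(1+a+a_{1})$; and since $r=0$ needs $\frac{1+s}{2}>0$, the constraint $0<\mu+\bigl(\frac{1+s}{2}\bigr)^{2}<\frac14$ is exactly $-1<s<0$, while $s\in\R$ forces $a_{1},a_{2}\in\R$. As $a<0$, $a_{1}\leq0$, $a_{2}<0$ in the relevant cases, $-1<a+a_{1}+a_{2}<0$ then forces $a+a_{1}\in(-1,0)$ and $a_{2}\in(-1,0)$, so $N(a+a_{1},a_{2})$ really is a complementary-series module. A Casimir computation from $H\cdot w(0)=(a_{1}-a_{2}+a)w(0)$ and $FE\cdot w(0)=\xi w(0)$ gives infinitesimal character $\frac{s}{2}\bigl(1+\frac{s}{2}\bigr)$, matching $N(a+a_{1},a_{2})$, so by~\ref{thm:Lemire} the submodule generated by $w(0)$ is $N(a+a_{1},a_{2})$. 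Finally $S(z)=F(z)={}_{2}F_{1}(-a,-a_{2};1+a_{1};z)$ gives $u_{n}=\frac{(-a)_{n}(-a_{2})_{n}}{(1+a_{1})_{n}\,n!}$, hence the stated $w(0)$; since here $a\in(-1,0)$ we are in case (A), so no change of variables is needed.

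Next, situation (a). I would argue that, after matching the prospective submodule's support to $Supp(V)$ and using~\ref{thm:Lemire}, the branch $1+a_{1}+a-r=-m$ always yields a highest-weight module (one finds $\xi=0$ and Casimir value $\frac{w(w+2)}{4}$ with $w=a_{1}-a_{2}+a$, forcing a matching representative with second parameter $0$), and the branch $1+a_{1}+a_{2}-r=-n$ forces $n=0$ unless $a_{1}=0$, and yields a highest-weight module whenever $a_{1}=0$; all of these are the submodules of Propositions~\ref{prop:discHW} and~\ref{prop:discLW}. The one remaining case is $1+a_{1}+a_{2}-r=0$ with $a_{1}\ne0$, i.e. $-1<a_{1},a_{2}<0$ and $r=1+a_{1}+a_{2}$; imposing $r\in\bigl(\frac{s}{2},\frac{1+s}{2}\bigr)$ translates into $-2<a_{1}+a_{2}-a<-1$, which (using $a<0$) forces $a\in(-1,0)$ and $a_{2}-a\in(-1,0)$, so $N(a_{1},a_{2}-a)$ really is complementary. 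Computing the $FE$-eigenvalue gives $\xi=(1+a_{1})(a_{2}-a)$, matching the weight-$(a_{1}-a_{2}+a)$ vector of $N(a_{1},a_{2}-a)$, so by~\ref{thm:Lemire} the submodule is $N(a_{1},a_{2}-a)$; and Euler's transformation of the hypergeometric function (Lemma~\ref{lem:hyperN}, using $1+a_{1}=r-a_{2}$) gives $F(z)=(1-z)^{a-r}$, hence $S(z)=(1-z)^{a}=\sum_{n}\frac{(-a)_{n}}{n!}z^{n}$ and $w(0)=\sum_{n}\frac{(-a)_{n}}{n!}z(n,n)$; again $a\in(-1,0)$, so we are in case (A).

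I expect the main obstacle to be the bookkeeping in situation (a). Unlike (b), where $r=0$ is rigid, in (a) the hypotheses of the second of the three propositions hold throughout a whole regime, so membership in $\mathcal{H}_{s}$ comes for free; the substantive point is that, once the support of the candidate submodule is forced to equal $Supp(V)$, the simple weight module produced is complementary only in the single sub-case kept above, every other branch degenerating --- because some parameter is pushed onto an integer --- to a highest- or lowest-weight module already enumerated in Propositions~\ref{prop:discHW}--\ref{prop:discLW}. Executing this requires, for each branch and each admissible value of the integer in $1+a_{1}+a_{(2)}-r\in\Z_{\leq 0}$, pinning down the exact range of $\xi$ allowed by the complementary-series inequalities and then reading off the isomorphism type from the infinitesimal character; the cases with that integer equal to $0$ and $a_{1}\ne0$ are precisely the survivors.
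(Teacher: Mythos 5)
Your proposal follows essentially the same route as the paper's proof: reduce to the three $\mathcal{H}_s$-membership propositions, split into the regime $r=0$ with $s$ real in $(-1,0)$ versus the regimes $1+a_1+a-r\in\Z_{\leq 0}$ or $1+a_1+a_2-r\in\Z_{\leq 0}$, pin down the candidate $(b_1,b_2)$ from the weight $a+a_1-a_2$ and the $FE$-eigenvalue $\xi$ via Lemire's theorem, and read the generator off the hypergeometric series (your derivation of $w(0)$ from $S(z)=F(z)$ in the first case and from Euler's transformation in the second is in fact more explicit than what the paper records, and your remark that both surviving cases force $a\in(-1,0)$, hence case (A) with no change of variables, is a useful check). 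Two details in your elimination of the integer branches are inaccurate, although the conclusions survive: for $1+a_1+a-r=-m$ the eigenvalue is not $\xi=0$ but $\xi=m(1+m+a+a_1-a_2)$, so the Casimir value is $\frac{w(w+2)}{4}$ only when $m=0$; solving $b_1-b_2=a+a_1-a_2$ and $b_2(b_1+1)=\xi$ still yields $b_2=m\in\Z_{\geq 0}$ or $b_1=-1-m\in\Z_{<0}$, hence a highest weight module, which is the computation the paper actually performs. Likewise, in the branch $1+a_1+a_2-r=-n$ with $a_1=0$ the excluded modules are lowest weight (they are the ones of Proposition \ref{prop:discLW}), not highest weight, and one should say explicitly that the reality of $2a-2n-1-s$ rules out the principal-series factor $a_1=-1-x+iy$ before asserting $-1<a_1,a_2<0$ and forcing $n=0$. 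With these routine corrections your argument coincides with the paper's.
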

\begin{proof}
 Let $v$ be the standard basis vector of $N(b_1,b_2) \subset V$, of weight $b_1-b_2$. Then it is straightforward to check that $FE\cdot v=b_2(b_1+1)v$. Moreover, as we already mentionned, we can assume that $b_1-b_2=a+a_1-a_2$. In the above notations, we have $\xi=b_2(b_1+1)$. On the other hand, we have expressed $\xi$ has $\xi=\mu + a_{2}(1+a+a_{1})$. From the above three propositions, we conclude that a vector $w$ of weight $a+a_1-a_2$, such that $FE\cdot w=\xi w$ generates a submodule of $V$ if and only either $s$ is real, $-1<s<0$ and $r=0$ or there is a non negative integer $n$ such that $r=1+a_1+a+n$ or $r=1+a_1+a_2+n$.
 
 In the first case, $s$ real implies that $a_1,\ a_2$ are real ; $-1<s<0$ and $r=0$ implies $\mu=0$. Therefore, we have $b_1-b_2=a+a_1-a_2$ and $b_2(b_1+1)=a_{2}(1+a+a_{1})$. Hence, up to isomorphism, $N(b_1,b_2)=N(a+a_1,a_2)$.
 
 In the second case, if $r=1+a_1+a+n=1+s-a_2+n$, then we have $2\sqrt{\mu+\left(\frac{s+1}{2}\right)^2}=2a_2-2n-1-s$. Therefore, we must have $2a_2-2n-1-s\in\ \R$ and $0<2a_2-2n-1-s<1$. The first condition is always fulfilled. The second condition reads $1+2n<a_2-a_1-a<2+2n$. But then we have also $\mu=(n-a_2)(n+1+a+a_1)$, which implies that $\xi=n(1+n+a+a_1-a_2)$. Therefore, we have $b_1-b_2=a+a_1-a_2$ and $b_2(b_1+1)=n(1+n+a+a_1-a_2)$. The solutions of this system are $b_1=n+a+a_1-a_2,\ b_2=n$ or $b_1=-1-n,\ b_2=a_2-a_1-a-n-1$. In both cases, the corresponding module is a highest weight module, and therefore does not belong to the complementary series.
 
 If $r=1+a_1+a_2+n=1+s-a+n$, then we have 
 $2\sqrt{\mu+\left(\frac{s+1}{2}\right)^2}=2a-2n-1-s$. Therefore, we must have $2a-2n-1-s\in\ \R$ and $0<2a-2n-1-s<1$. But $2a-2n-1-s\in\ \R$ implies that $a_1,\ a_2 \in\ \R$. Now the second condition reads 
 $-2-2n<a_1+a_2-a<-1-2n$. But then we also have $\mu=(n-a)(n-a+1+s)$, which implies that $\xi=(n+a_2-a)(1+n+a_1)$. Therefore, we have $b_1-b_2=a+a_1-a_2$ and $b_2(b_1+1)=(n+a_2-a)(1+n+a_1)$. Consequently, we have $b_1=n+a_1$ and $b_2=n+a_2-a$ or $b_1=a-n-a_2-1$ and $b_2=-1-n-a_1$. If $a_1=0$, then the corresponding module is a lowest weight module, and therefore does not belong to the complementary series. If $a_1\not=0$, then $-1<a_1,\ a_2<0$. However, in this case, $a_2-a>a_2>-1$. Therefore the condition $-2-2n<a_1+a_2-a<-1-2n$ can only hold if $n=0$, which gives the asserted condition. Then the submodule is isomorphic to $N(a_1,a_2-a)$ or to $N(a-a_2-1,-1-a_1)$, which turn to be isomorphic.
\end{proof}

Let us now state a final result about the discrete spectrum of the tensor products.

\begin{theo}\label{thm:disc-spec}
 \begin{enumerate}
 \item Let $a,b<0$. Then the discrete spectrum of the Hilbert tensor product $N(0,a)\otimes N(b,0)$ is 
\begin{align*}
 N(a,b),&\ \mbox{if}\ -1<a+b<0,\\
\displaystyle\bigoplus_{0\leq 2n<a-b-1} N(b-a+2n,0),&\ \mbox{if}\ 0<a-b-1,\\
\displaystyle\bigoplus_{a-b+1<2n\leq 0} N(0,a-b-2n),&\ \mbox{if}\ a-b+1<0.
  \end{align*}
\item Let $-1\leq x<0$, $y\in\ \R\setminus\{0\}$, and $a<0$. Then the discrete spectrum of the Hilbert tensor product $N(-1-x+iy,x+iy)\otimes N(a,0)$ is 
$$\displaystyle\bigoplus_{2n<2x-a} N(-1-2x+a+2n,0).$$
\item Let $-1<a_1,\ a_2<0$, and $a<0$. Then the discrete spectrum of the Hilbert tensor product $N(a_1,a_2)\otimes N(a,0)$ is 
 \begin{align*}
\left(\displaystyle\bigoplus_{2n<a_2-a_1-a-1} N(a+a_1-a_2+2n,0)\right)& \oplus N(a+a_1,a_2),\\
& \mbox{if}\ -1<a+a_1+a_2<0,\\
\left(\displaystyle\bigoplus_{2n<a_2-a_1-a-1} N(a+a_1-a_2+2n,0)\right)& \oplus N(a_1,a_2-a),\\
& \mbox{if}\ -2<a_1+a_2-a<-1,\\
\left(\displaystyle\bigoplus_{2n<a_2-a_1-a-1} N(a+a_1-a_2+2n,0)\right),&\ \mbox{otherwise}.
          \end{align*}
\end{enumerate}
\end{theo}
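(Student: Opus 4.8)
The plan is to assemble Theorem~\ref{thm:disc-spec} from the four structural results of this section --- Proposition~\ref{prop:discPS} (no principal series constituent), Proposition~\ref{prop:discHW} (highest weight constituents), Proposition~\ref{prop:discLW} (lowest weight constituents) and Proposition~\ref{prop:discCS} (complementary series constituent, with explicit generator) --- after rewriting the parametrisation $V=N(a_1,a_2)\otimes N(a,0)$ of this section in the parametrisation of each of the three cases of the statement. Concretely, in case~(1) one takes $a_1=0$, identifies the section's $a_2$ with the first parameter of the statement and the section's $a$ with its second parameter, and uses the $a_1=0$ branches of Propositions~\ref{prop:discHW}, \ref{prop:discLW} and the first alternative of Proposition~\ref{prop:discCS}; in case~(2) one takes $a_1=-1-x+iy$, $a_2=x+iy$, so that $\Re(s)=a-1<-1$, and uses the $a_1\ne 0$ branches (in particular Proposition~\ref{prop:discLW} gives no lowest weight constituent and Proposition~\ref{prop:discCS} none from the complementary series, since $a_1\notin\R$); in case~(3) one is already in the stated form with $-1<a_1,a_2<0$. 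Substituting these choices into the inequalities and generators displayed in those propositions produces precisely the three lists, including the arithmetic identities $-1-a-a_1+a_2=a_2-a_1-a-1$ and the like.

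First I would record the general principle. By Nelson's theorem~\ref{thm:nelson} together with the classification of Section~3, every closed irreducible $G$-invariant subspace $W\subset V$ is one of the unitarisable simple weight modules listed there, and, being a submodule, its support lies in $Supp(V)=a_1-a_2+a+2\Z$; hence up to isomorphism $W=N(b_1,b_2)$ with $b_1-b_2$ congruent to $a_1-a_2+a$ modulo $2$, and after normalising the shift we may take $b_1-b_2=a_1-a_2+a+2n_0$ for some $n_0\in\Z$ (or $b_1-b_2=a_1-a_2+a$ in the principal/complementary case). The trivial representation, and hence every finite dimensional module, is excluded: by Proposition~\ref{prop:discHW} every highest weight submodule of $V$ has highest weight $<-1$ (for instance when $a_1\ne 0$ one has $\lambda=a_1+a-a_2+2n_0<a_1+a-a_2+(-1-a-a_1+a_2)=-1$, and similarly for $a_1=0$), and by Proposition~\ref{prop:discLW} every lowest weight submodule has lowest weight $>1$, whereas a trivial submodule would be simultaneously a highest and a lowest weight vector of weight $0$. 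Thus it suffices to determine which principal, complementary, highest and lowest weight modules occur --- exactly the content of the four propositions.

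It then remains to see that the discrete spectrum is the orthogonal (Hilbert) direct sum of the modules so obtained, with multiplicity one. Multiplicity one is immediate from the propositions themselves: each of Propositions~\ref{prop:discHW}, \ref{prop:discLW} and~\ref{prop:discCS} produces its generating highest (resp. lowest) weight vector, resp. the generator $w(0)$, uniquely up to a scalar, since the governing recurrence has a unique solution once $u_0$ is fixed; as a simple submodule has one-dimensional weight spaces by Proposition~\ref{prop:WMsl2}, such a submodule is determined by one of its weight vectors, so no constituent occurs twice. The listed submodules are moreover pairwise non-isomorphic --- the highest weight ones have distinct highest weights, the lowest weight ones distinct lowest weights, and the complementary series constituent lives in a different family --- hence pairwise orthogonal, so the closure of their sum is an orthogonal direct sum; and since we have enumerated \emph{all} irreducible subrepresentations, this closure is by definition the discrete spectrum.

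The analytic substance has already been spent in the propositions (asymptotics of hypergeometric functions and of the Jacobi-type recurrences), so the only genuinely delicate point here is bookkeeping: keeping the parameter dictionaries straight in the three cases and checking that the ``if\/'' clauses are mutually exclusive. For example, in case~(1) the complementary series condition $-1<a+b<0$ together with either $0<a-b-1$ or $a-b+1<0$ would force $a>0$ resp. $b>0$, contradicting $a,b<0$; and in case~(3) the two complementary series alternatives $-1<a_1+a_2+a<0$ and $-2<a_1+a_2-a<-1$ would force $0<a<1$, again contrary to $a<0$. In the one remaining phase of the parameters the list of constituents is empty, which is the tacit ``otherwise\/'' of each case. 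This mutual-exclusivity and parameter-translation check is the main (and only mildly fiddly) obstacle.
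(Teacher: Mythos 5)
Your assembly of the theorem from Propositions \ref{prop:discHW}, \ref{prop:discLW}, \ref{prop:discPS} and \ref{prop:discCS} is exactly the paper's proof, which consists of a single sentence citing those four propositions; your additional bookkeeping (parameter dictionaries, exclusion of the trivial representation, multiplicity one and pairwise orthogonality) only makes explicit what the paper leaves implicit, so the proposal is correct and follows essentially the same route. The one point to note is that in case (1) the substitution into Proposition \ref{prop:discCS} actually yields $N(b,a)$ (in agreement with $\pi^c_{\lambda,\mu}$ in the introduction and with $Supp(V)=b-a+2\Z$), not the printed $N(a,b)$ --- a typo in the theorem's statement rather than a gap in your argument.
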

\begin{proof}
 This is a consequence of propositions \ref{prop:discHW}, \ref{prop:discLW}, \ref{prop:discPS}, and \ref{prop:discCS}.
\end{proof}

\begin{rema}
 In \cite{Re76}, Repka gives the decomposition of tensor products of unitary representations of $SU(1,1)$. Theorem \ref{thm:disc-spec} recovers in particular (some of) these results. Note also that the particular case $N(0,a)\otimes N(a,0)$ was obtained by in \cite{OZ97}.
\end{rema}

\subsection{Application to smooth vectors}

Let $a_1,\ a_2,\ a$ be real numbers such that $-1<a_1\leq 0$, $-1<a,a_2<0$ and $-1<a+a_1+a_2<0$. From proposition \ref{prop:discCS}, we know that the (completed) tensor product $V=N(a_1,a_2)\otimes N(a,0)$ contains a (Hilbert) submodule $W$ isomorphic to $N(a_1+a,a_2)$. We want to determine the possible relation between the smooth vectors in $W$ and the smooth vectors in $V$. We denote them respectively by $W_{\infty}$ and $V_{\infty}$.

\begin{prop}\label{prop:Cinfty}
 With notations as above, we have $W\cap V_{\infty}=\{0\}$. In particular, $W_{\infty}\cap V_{\infty}=\{0\}$.
\end{prop}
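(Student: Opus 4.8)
The plan is to reduce the statement to a claim about the single weight vector $w(0)$ produced in Proposition~\ref{prop:discCS}, and then to rule that vector out by an asymptotic argument resting on Nelson's criterion (Theorem~\ref{thm:nelson}) and the hypergeometric estimates of Section~5.

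First I would carry out the reduction. Since $W\cong N(a_1+a,a_2)$ is a simple weight module, it is of degree one (Proposition~\ref{prop:WMsl2}), so $W=\overline{\bigoplus_{n_0\in\Z}W_{n_0}}$ with each $W_{n_0}=W\cap V_{n_0}$ at most one–dimensional, spanned by a weight vector $w(n_0)$ which is obtained from $w(0)=\sum_{n\ge0}\frac{(-a)_n(-a_2)_n}{(1+a_1)_n}\frac{z(n,n)}{n!}$ by applying $E$ and $F$; moreover in $\pi^c$ there is no highest or lowest weight, so $E$ and $F$ act bijectively between consecutive weight spaces of $W$. Now suppose $0\ne v\in W\cap V_\infty$ and write $v=\sum_{n_0}v_{n_0}$ for its decomposition into $H$–eigenvectors. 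Because $H$ (after the twist of the Introduction) integrates to a circle action, the projection onto $V_{n_0}$ is averaging the action against a character, hence preserves smooth vectors, so each $v_{n_0}\in V_\infty$. Since $v_{n_0}\in\C\,w(n_0)$, some $w(n_0)$ lies in $V_\infty$; applying an appropriate power of $E$ or $F$ (which preserves $V_\infty$) then forces $w(0)\in V_\infty$. Thus it suffices to prove $w(0)\notin V_\infty$, which already gives $W\cap V_\infty=\{0\}$, and a fortiori $W_\infty\cap V_\infty=\{0\}$ since $W_\infty\subseteq W$.

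To show $w(0)\notin V_\infty$ I would use Theorem~\ref{thm:nelson}: $V_\infty=\bigcap_{k\ge1}D(\bar A^k)$ with $A=\rho(\Omega)-\tfrac12\rho(E-F)^2$. As $\rho(\Omega)=\tfrac14H^2+\tfrac12H+FE$ and $w(0)$ is a simultaneous eigenvector of $H$ and of $\overline{FE}$, the operator $\rho(\Omega)$ acts on $w(0)$ by a scalar, so the entire question is about the iterates of $(E-F)^2$, which move a weight space $V_{n_0}$ into $V_{n_0-2}\oplus V_{n_0}\oplus V_{n_0+2}$. Using Table~\ref{tab:asymptotics} one finds $\|z(k,l)\|^2\sim C\,N^{2+s}$ (with $N$ the common magnitude of $k$ and $l$, $s=a+a_1+a_2\in(-1,0)$) and $|c_n|^2\|z(n,n)\|^2\sim C'n^{-s-2}$, so convergence is only marginally achieved; the plan is to compute the coefficient sequences of $A^jw(0)$ in the basis $\{z(k,l)\}$ via the explicit formulas \eqref{action1} and to show that at some finite order $k$ the $\|z(k,l)\|^2$–weighted $\ell^2$–norm diverges. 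This is the quantitative counterpart of the fact that the generating function $S(z)={}_2F_1(-a,-a_2;1+a_1;z)$ of $w(0)$, although continuous on $\overline D$ (Lemma~\ref{lem:Gauss}, since $\gamma-\alpha-\beta=1+s>0$), has a non-smooth boundary singularity: $S(z)-S(1)\sim C(1-z)^{1+s}$ and $S'(z)\sim C(1-z)^{s}$ blows up at $z=1$ (Lemma~\ref{lem:hyper}(v) with $r=0$), whereas smoothness of the vector $w(0)$ would demand boundary regularity of $S$ of every order.

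The step I expect to be the main obstacle is exactly this asymptotic analysis of $A^jw(0)$: in each of $E\cdot z(k,l)$ and $F\cdot z(k,l)$ the two terms of \eqref{action1} are of the same order and cancel to leading order, so one must track subleading asymptotics of ratios of Pochhammer symbols with precision that is uniform in $j$, in order to locate the precise order $k$ at which square-summability fails. If the bookkeeping for Nelson's operator becomes unwieldy, a cleaner alternative is to bypass $A$ and translate smoothness of $w(0)$ into a simultaneous regularity statement for the whole family of weight-space generating functions attached to $w(0)$, and then to invoke the boundary asymptotics of the hypergeometric function and its derivatives (Lemma~\ref{lem:hyper}, Lemma~\ref{lem:asym}) to see that these generating functions fail to lie in the "smooth" refinement of the spaces $\cal H _s$ from Section~5. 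Either route yields $w(0)\notin V_\infty$, hence $W\cap V_\infty=\{0\}$ and $W_\infty\cap V_\infty=\{0\}$.
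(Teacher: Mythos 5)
Your reduction to the single vector $w(0)$ is sound: averaging the (twisted) circle action generated by $H$ projects onto weight components, preserves both $V_\infty$ and the closed invariant subspace $W$, the weight spaces of $W\cong N(a_1+a,a_2)$ are one-dimensional, and since $a_2$ and $a+a_1$ are not integers $E$ and $F$ carry $w(n_0)$ to nonzero multiples of $w(n_0\pm1)$ while mapping $V_\infty$ into itself; so a nonzero element of $W\cap V_\infty$ would indeed force $w(0)\in V_\infty$. This is a legitimate, and in fact tidier, alternative to the paper's route, which instead writes out every $w(k)$ explicitly and uses that distinct $w(k)$'s are supported on disjoint sets of basis vectors $z(\cdot,\cdot)$. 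The genuine gap is that the heart of the matter, $w(0)\notin V_\infty$, is never proved. Route one (iterating $A$ on $w(0)$ and exhibiting divergence at some finite power) stops exactly where you say it does: the two terms in $E\cdot z(k,l)$ and $F\cdot z(k,l)$ cancel to leading order, and no subleading analysis, uniform in the number of iterations, is supplied. Route two is not a proof either: the assertion that smoothness of $w(0)$ would force boundary regularity of every order for $S(z)={}_2F_1(-a,-a_2;1+a_1;z)$, i.e.\ membership in some ``smooth refinement'' of $\mathcal{H}_s$, is an unproved translation -- the spaces $\mathcal{H}_s$ of Section 5 encode only membership in the Hilbert space $V$, and neither the paper nor your sketch establishes the corresponding equivalence for $V_\infty$. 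Announcing that ``either route yields $w(0)\notin V_\infty$'' does not close this.

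For comparison, the paper finishes this step directly and cheaply: from Nelson's theorem and the explicit action \eqref{action1} it characterizes $V_\infty$ as the set of $\sum_{k,l}u_{k,l}z(k,l)$ with $\sum_{k,l}|u_{k,l}|^2|k|^{1+a_1+a_2}|l|^{1+a}(k^2+l^2)^N<\infty$ for every $N$, i.e.\ rapid decay of the weighted coefficients. The coefficients of $w(0)$ are $u_n=\frac{(-a)_n(-a_2)_n}{(1+a_1)_n\,n!}$, and a Pochhammer asymptotic gives $|u_n|^2\|z(n,n)\|^2\sim Cn^{-2-s}$ with $-1<s<0$: summable for $N=0$ (so $w(0)\in V$), divergent already for $N=1$ (so $w(0)$ is not even in the domain of a fixed low power of $\bar A$). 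If you import that characterization of $V_\infty$, your reduction plus this one-line estimate completes the proof; without it, or without carrying out one of your two asymptotic programmes in detail, the proposal stops short of the key analytic fact.
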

\begin{proof}
According to Nelson's theorem \ref{thm:nelson}, the set of smooth vectors of a unitarisable module is the common domain of definition of the various operators $\rho(u)$ for $u\in \cal U (\germ g )$. Denote by $\{w(k),\ k\in\ \Z\}$ the standard basis of $W=N(a_1+a,a_2)$. Recall that the action of the triple $(H,E,F)$ on $W$ is given by 
$$\left\{\begin{array}{lcl}
          H\cdot w(k) & = & (a+a_1-a_2+2k)w(k)\\
	  E\cdot w(k) & = & (a_2-k)w(k+1)\\
	  F\cdot w(k) & = & (a+a_1+k)w(k-1)
         \end{array}\right.$$
On the other hand, formulae \eqref{norm:cpl} gives $\|w(k)\|^2 \sim |k|^{1+a+a_1+a_2}\|w(0)\|^2$, and so 
$$W=\left\{\displaystyle\sum_{k\in\ \Z}\ \alpha_kw(k)\ : \ \displaystyle\sum_{k\in\ \Z}\ |\alpha_k|^2|k|^{1+a+a_1+a_2}<\infty\right\}.$$
Hence Nelson's theorem implies that 
$$W_{\infty}=\left\{\displaystyle\sum_{k\in\Z}\ \alpha_kx(k)\ : \ \forall\ N \in\ \Z_{\geq 0},\ \displaystyle\sum_{k\in\Z}\ |\alpha_k|^2|k|^{1+a+a_1+a_2}k^{2N} <\infty\right\}.$$
Recall then that 
$$V=\left\{\displaystyle\sum_{k,l}\ u_{k,l}z(k,l)\ : \ \displaystyle\sum_{k,l}\  |u_{k,l}|^2|k|^{1+a_1+a_2}|l|^{1+a}< \infty\right\}.$$
Using formulae \eqref{action1}, giving the action of $E$ and $F$ on $V$, we check that 
$$V_{\infty}=\left\{\displaystyle\sum_{k,l}\ u_{k,l}z(k,l)\ : \ \forall\ N \in\ \Z_{\geq 0},\ \displaystyle\sum_{k,l}\  |u_{k,l}|^2|k|^{1+a_1+a_2}|l|^{1+a}(k^2+l^2)^N< \infty\right\}.$$

Now, we know that $$w(0)=\displaystyle\sum_{n\geq 0}\ \frac{(-a)_n(-a_2)_n}{(1+a_1)_n}\frac{z(n,n)}{n!}.$$ Then the standard basis is given by the following formulae:
$$\left\{\begin{array}{lcll}
 w(k) & = & \displaystyle\prod_{j=1}^k\ \frac{a+a_1+j}{a_1+j}\times \displaystyle\sum_{n\geq 0}\ \frac{(-a)_n(k-a_2)_n}{(k+1+a_1)_n}\frac{z(k+n,n)}{n!}, & k>0\\
 w(-k) & = & \displaystyle\prod_{j=1}^k\ \frac{a_1+1-j}{a+a_1+1-j}\times \displaystyle\sum_{n\geq 0}\ \frac{(-a)_n(-k-a_2)_n}{(-k+1+a_1)_n}\frac{z(-k+n,n)}{n!}, & k>0.
\end{array}\right.$$
It is now easy to check that $w(k)\not\in\ V_{\infty}$. Remark then that the weight vectors that occur in the decomposition of $w(k)$ and $w(l)$ are all distinct if $k\not= l$. Therefore, we conclude that $W\cap V_{\infty}=\{0\}$, as asserted.
\end{proof}

\begin{rema}
 In \cite{SV10}, Speh and Venkataramana proved an analogous result about the $K$-finite vectors for the restriction of some complementary series of $SO(n,1)$ to $SO(n-1,1)$.
\end{rema}


\bibliographystyle{abbrv}


\end{document}